\documentclass[11pt, twoside]{article}

\usepackage{amsmath,amsthm}

\usepackage[utf8]{inputenc}

\usepackage{enumerate}

\usepackage{environ}
\NewEnviron{cases2}[1]{%
                      \begin{equation}\label{#1}\left\{ \begin{alignedat}{2} \BODY \end{alignedat}\right. \end{equation} }
\NewEnviron{cases3}[1]{%
                      \begin{equation}\label{#1} \begin{alignedat}{2} \BODY \end{alignedat} \end{equation} }
\NewEnviron{cases22}{%
                      \begin{equation}\left\{ \begin{alignedat}{2} \BODY \end{alignedat}\right. \end{equation} }

\usepackage{fancyhdr}
\usepackage{cite}
\usepackage{enumitem}

\usepackage{graphics}

\usepackage{algcompatible}
\usepackage{algorithm}

\usepackage{multicol}

\usepackage{xargs}                      
\usepackage[colorinlistoftodos,prependcaption,textsize=tiny]{todonotes}
\usepackage[normalem]{ulem}

\newcommandx{\pcomment}[2][1=]{\todo[linecolor=red,backgroundcolor=red!25,bordercolor=red,#1]{#2}}
\newcommandx{\kcomment}[2][1=]{\todo[linecolor=blue,backgroundcolor=blue!25,bordercolor=blue,#1]{#2}}

\newif\ifpreprint

\usepackage{hyperref}
\hypersetup{%
    colorlinks=True, 
    citecolor=blue,
    linkcolor=black}

\usepackage[charter]{mathdesign}

\newcommand{\rom}[1]{\uppercase\expandafter{\romannumeral #1\relax}}

\newcommand{\black}{\color{black}}

\pagestyle{myheadings}

  
  \theoremstyle{definition}
  \newtheorem{theorem}{Theorem}[section]
  \newtheorem{corollary}[theorem]{Corollary}
  \newtheorem{proposition}[theorem]{Proposition}
  \newtheorem{lemma}[theorem]{Lemma}
  \newtheorem{definition}[theorem]{Definition}
  \newtheorem{remark}[theorem]{Remark}
   
  \newtheorem{example}[theorem]{Example}
  \newtheorem{examples}[theorem]{Examples}





  \newtheorem*{assumption*}{Assumption}

  
  
  \numberwithin{equation}{section}
    
  \frenchspacing
  
  \textwidth=167mm
  \textheight=23cm
  \parindent=16pt
  \oddsidemargin=-0.5cm
  \evensidemargin=-0.5cm
  \topmargin=-0.5cm



\providecommand{\Div}{\operatorname{div}}          












\newcommand{\VN}{{\mathbf{N}}}

\newcommand{\VR}{{\mathbf{R}}}






\newcommand{\Dsf}{{\mathsf{D}}}


\providecommand{\Ca}{{\cal A}}

\providecommand{\Ch}{{\cal H}}

\setenumerate[0]{label=(\alph*)}

\newcommand{\eps}{{\varepsilon}}


\newcommand{\ben}{\begin{equation}}
\newcommand{\een}{\end{equation}}

\newcommand{\benn}{\begin{equation*}}
\newcommand{\eenn}{\end{equation*}}

\usepackage[colorinlistoftodos]{todonotes}

\newcommand{\Om}{{\Omega}}

\newcommand{\F}{{G}}

\setlength{\jot}{2pt}

\usepackage{authblk}
\usepackage[T1]{fontenc}
\usepackage[utf8]{inputenc}

\usepackage{authblk}

\title{ The topological state derivative: an optimal control perspective on topology optimisation }

\author[1]{Phillip Baumann\thanks{E-Mail: phillip.baumann(at)tuwien.ac.at}}
\author[2]{Idriss Mazari-Fouquer\thanks{E-Mail: mazari(at)ceremade.dauphine.fr}}
\author[1]{Kevin Sturm\thanks{E-Mail: kevin.sturm(at)tuwien.ac.at}}

\affil[1]{TU Wien, Wiedner Hauptstr. 8-10,
      1040 Vienna, Austria}
\affil[2]{CEREMADE, UMR CNRS 7534, Universit\'e Paris-Dauphine, Universit\'e PSL, Place du Mar\'echal De Lattre De Tassigny, 75775 Paris cedex 16, France}

\begin{document}
\maketitle
\tableofcontents

\begin{abstract}
In this paper we introduce the topological state derivative for general topological dilatations and explore its relation to standard optimal control theory. We show that for a class of partial differential equations, the 
shape dependent state variable can be differentiated with respect to the topology, thus leading to 
a linearised system resembling those occurring in standard optimal control problems. However, a lot of care has to be taken when handling the regularity of the solutions of this linearised system. In fact, we should expect different notions of (very) weak solutions, depending on whether the main part of the operator or its lower order terms are being perturbed.  We also study the relationship with the topological state derivative, usually obtained through classical topological expansions involving boundary layer correctors. A feature of the topological state derivative is that it can either be derived via Stampacchia-type regularity estimates or alternately with classical asymptotic expansions.
 
  It should be noted that our approach is flexible enough to cover more than the usual case of point perturbations of the domain. In particular,  and in the line of \cite{a_DE_2022a,a_DE_2017a}, we deal with more general dilatations of shapes, thereby yielding topological derivatives with respect to curves, surfaces or hypersurfaces.  

 In order to draw the connection to usual topological derivatives, which are typically expressed with an adjoint equation, we show how usual first order topological derivatives of shape functionals can be easily computed using the topological state derivative.
\end{abstract}

\section{Introduction}
\subsection{Scope of the paper} The main goal in shape optimisation problems is to optimise a certain set, the ``design variable" $\Om$, in order to maximise or minimise a certain functional. To achieve this goal, it is necessary to understand how this functional varies under perturbations of $\Om$. Of particular importance are perturbations obtained by drilling a small inclusion $\omega_\eps$ of size $\eps$ into $\Om$. The first order variation of the functional under this perturbation is called the ``topological derivative". 
After its introduction in the pioneering works \cite{a_SOZO_1999a,a_GAGUMA_2001a} in the context of linear elasticity, the topological derivative framework was used in several numerical algorithms;  let us for instance mention level-set algorithms \cite{a_SOZO_1999a} or Newton-type algorithm \cite[Chapter 10]{b_NOSOZO_2019a}. 
We also refer to the monographs \cite{b_NOSO_2013a}, where several topological derivatives for various model problems are derived.

Recently \cite{c_DEST_2016a} a Lagrangian technique, called the ``averaged adjoint approach'',  was proposed as an efficient tool to compute topological derivatives. This technique allows for a wide range of applications: topological derivatives for Dirichlet boundary conditions \cite{a_AM_2021a}, topological derivatives for nonlinear  \cite{a_ST_2020a} and quasilinear problems \cite{a_GAST_2020a} or higher order topological derivatives  \cite{a_BAST_2021a} can be computed in a systematic way. We also refer to 
\cite{a_AM_2006a} for another Lagrangian technique to compute topological derivatives. 

In the even more recent paper \cite{a_DE_2022a},  a way to compute the topological derivative directly using the unperturbed adjoint equation was proposed.  In this reference, more general topological perturbations, called dilatations, are also considered; this leads to a more general notion of topological derivative.  In  \cite[Thm. 3.4]{a_DE_2022a},  the difference of the perturbed and unperturbed state variable are divided by the volume of the perturbation, however, no analysis on the existence of this limit is provided.  We will see that, for the models we consider, that the limit of the quotient divided by the volume of the perturbation for point perturbations and dilatations of hypersurfaces actually exists in a suitable function space; this leads us to a new notion of topological derivative of the state which we refer to as the {\emph topological state derivative}. A difference between \cite{a_DE_2022a} and our model problems is that in this reference homogenous Neumann boundary conditions on the inclusion boundary are imposed, while we deal with transmission problems, which can be seen as inhomogenous Neumann boundary conditions on the inclusion boundary. 

Our goal, in this paper, is to present a unique view on the topological derivative, by framing it as a usual derivative,  thereby leading to a direct approach to computing topological derivatives.
This is done by first perturbing the partial differential equation and then deriving a linearised equation as is usually done in optimal control theory \cite{b_ITKU_2008a,b_TR_2010a,b_HIULUL_2009a}. This shows that the design-to-state operator is actually differentiable for certain PDE constraints, and  that its derivative is described by a linearised system similar to optimal control problems. These linearised systems are usually very singular in the sense that their solutions admit low regularity. Typically, for problems where  the operator is perturbed, the linear system only admits very weak solutions. Interestingly these linearised systems may involve terms which are usually obtained from the classical asymptotic analysis performed on the problem under consideration.  Solutions of 
    the linearised system for the semilinear problem will be analysed in our paper through the notions introduced by Stampacchia, while the operator perturbation of the 
    transmission problem requires the notion of very weak solutions.  We remark that in state constrained optimal control problems low regularity of the adjoint equations is also an issue and thus the technical difficulties we encounter are related to the discussion of \cite{a_MEAPASC}, where the uniqueness of solutions to adjoint equations with mixed boundary conditions is discussed. Our approach also allows us to derive at least first order topological derivatives.

\paragraph{Structure of the paper}
 Our paper is structured as follows:
\begin{enumerate}
    \item In Section \ref{Se:BasicDefinition} we gather all the basic notions and definitions of generalised topological derivatives  and the topological state derivative.
\item Section \ref{Se:Co} contains a discussion of one of our main points, that is, the link between control derivatives, topological derivatives and the asymptotic analysis of PDEs. All the  rigorous computations in this section are carried out for linear operators, and serve to illustrate our idea.
\item Section \ref{Se:Ell} contains our rigorous results for the analysis of semilinear elliptic equations, when perturbing lower-order terms.  In Subsection \ref{Se:Adjoint}, we study several concrete examples using adjoint states.
\item Section \ref{Se:Op} is devoted to the study of point perturbations of the operator. The analysis is distinctly different from the semilinear case discussed in Section~\ref{Se:Ell}, both from the point of view of the notion of (very) weak solutions, and from that of first order asymptotics.
\item The rest of the paper contains the proofs of our results.
\end{enumerate}

\subsection{Generalised topological derivatives and the topological state derivative }\label{Se:BasicDefinition}

\paragraph{Generalised topological derivatives}
Throughout the paper, we let $\Dsf \subset \VR^d$ be a design region that is, a smooth, open, bounded domain.
Henceforth we denote by $\Ca(\Dsf)$ the set of admissible designs; in other words, 
\[ \Ca(\Dsf)=\left\{\Om \text{ measurable, }\Om \subset \Dsf\right\}.\]   A function $J:\Ca(\Dsf)\to \VR$ is called a shape functional. 
\begin{definition}\label{De:Main}
    Let $\Omega\in\Ca(\Dsf)$. Consider a compact set $E\subset \Dsf$ such that $\partial\Omega\cap E=\emptyset$ and denote by $E_\eps := \{x\in \VR^d:\; d_E(x)< \eps\}$  the tubular neighborhood of $E$ of width $\eps>0$. We define the perturbed set $\Omega(E_\eps)\subset\Dsf$ by
\begin{equation}
    \Om(E_\eps):=
 \begin{cases}
     \Om\cup E_\eps&   E\subset \Dsf\setminus \overline\Om, \\
     \Om\setminus \overline{E}_\eps &  E\subset \Om.
\end{cases}
\end{equation}
 The topological derivative of the functional $J$ at $E$ is defined by the following limit, provided it exists:
 \begin{equation}\label{def:topo_derivative}
        DJ(\Om)(E) := \lim_{\eps\searrow0} \frac{J(\Om(E_\eps)) - J(\Om)}{|E_\eps|}.
    \end{equation}
\end{definition}

\begin{remark}
    Here, we note that our definition of topological derivative already assumes that the first order term in the asymptotic expansion of $J$ is of order $|E_\eps|$,  the Lebesgue measure of $E_\eps$. This obviously depends on the shape functional under consideration. In several cases, for instance when considering a PDE dependent shape functional, and when enforcing Dirichlet boundary conditions on the boundary of $E_\eps$,  terms of lower order appear  \cite{a_AM_2021a}. However, as will be clear throughout, in all cases under consideration here, the leading order in the topological expansion is $|E_\eps|$. 
\end{remark}
Working with tubular neighborhoods allows for a great variety of perturbations; let us list a few examples corresponding to particular choices of $E$.
\begin{examples}
    Assume again that $\Dsf\subset \VR^d$ and $\Om\subset \Dsf$. 
    \begin{itemize}
        \item $E=\{x_0\}$, $x_0\in \Dsf$. Then $E_\eps = B_\eps(x_0)$ and $|E_\eps| = \eps^d|B_1(0)|$, where $B_r(x)$ denotes the open ball of radius $r>0$ located at $x$ in $\VR^d$.
                \item Let $E=\Gamma\subset \Dsf$ be a smooth closed orientable hypersurface with normal $\nu:\Gamma\to \VR^d$, $|\nu|=1$ on $\Gamma$. Then, for $\eps>0$ small enough, 
            $\Gamma_\eps = \{x+t\nu(x):\; x\in \Gamma:\; t\in [0,\eps)\}$ and $|\Gamma_\eps| = \eps \mathrm{Per}(\Gamma)+o_{\eps\to 0}(\eps),$ where $\mathrm{Per}(\Gamma)$ the perimeter of $\Gamma$,  which in view of the smoothness of $\Gamma$ is equal to the $(d-1)$-dimensional Lebesgue measure of $\Gamma$.    
    \end{itemize}
\end{examples}

\paragraph{The topological state derivative as derivative of the shape-to-state operator}
Throughout this paper we only consider PDE-dependent shape functionals. Let $X(\Dsf)$ be a space of functions defined on $\Dsf$ with values in $\VR$. We consider an equation of the type: find $u_\Om\in X(\Dsf)$, such that
\begin{equation}\label{eq:state_abstract}
    \langle E_\Om(u_\Om),\varphi\rangle_{X(\Dsf)',X(\Dsf)}=0 \quad \text{ for all } \varphi \in X(\Dsf),
\end{equation}
where $E_\Om:X(\Dsf)\to X(\Dsf)'$ is a potentially nonlinear operator. Typically, $X(\Dsf)$ is a Sobolev space ($X(\Dsf)=W^{1,p}(\Dsf)$), and \eqref{eq:state_abstract} merely corresponds to the weak formulation of an elliptic equation of the type 
\begin{equation}\label{eq:state_pragmatic}
\begin{cases}
\mathcal L_\Om u=f_\Om \quad\text{ in }\Dsf\,, 
\\ u\text{ satisfies boundary conditions on $\partial \Dsf$,}
\end{cases}
\end{equation}where the expression "weak formulation" needs to be specified. The operator $\mathcal L_\Om$ depends on $\Om$. In this paper, several dependences on $\Om$ are considered: $\mathcal L_\Om$ can take the form $-\Div((\alpha+\beta\chi_\Om)\nabla)$, or $-\Delta -\chi_\Om$, and can be nonlinear in $u$. Similarly, the function $f_\Om$ is \emph{a priori} assumed to depend on the set $\Om$.

\begin{definition}\label{De:StS}
    We define the shape-to-state operator $S:\Ca(\Dsf) \to X(\Dsf)$ by $S(\Om):= u_\Om$, where $u_\Om$ solves \eqref{eq:state_abstract} for the set $\Om\in \Ca(\Dsf)$.
\end{definition}
Of course, under proper assumptions on the nonlinear operator $E_\Om$, $S$ is a uniquely defined operator so that Definition \ref{De:StS} makes sense.

In the following definition we introduce the shape-to-state operator and its derivative, which we refer to as the topological state derivative. In contrast to the usual asymptotic expansion \cite[Chapter 5]{b_NOSO_2013a} of the state, our definition does not involve a rescaling and is simply the usual differential quotient of the state;  in this regard, it is akin to an optimal control approach.
\begin{definition}[Topological state derivative: derivative of the shape-to-state operator]\label{De:Main1}
Let $\Om\in \Ca(\Dsf)$ and consider a compact set $E\subset \Dsf\setminus\overline\Om$ or $E\subset \Om$. For $\eps >0$ we introduce
\begin{equation}
    U_\eps := U_{\Om(E_\eps)} := \frac{u_{\Om(E_\eps)} - u_{\Om}}{|E_\eps|},
\end{equation}
and define the \emph{topological state derivative} of $S$ at $\Om$ in direction $E$ by 
\begin{equation}
    S'(\Om)(E) := U_0:= U_{E,0} :=  \lim_{\eps\searrow 0} U_\eps, 
\end{equation}
where the limit has to be understood in an appropriate function space specified later on. 
\end{definition}

In the following sections we will examine three different PDE constraints and study the differentiability of the corresponding 
shape-to-state operator.  This will form the groundwork for the optimisation of several PDE constrained functionals.

\subsection{Control derivatives, topological  derivatives and asymptotic analysis}\label{Se:Co}
\paragraph{From control derivatives to topological derivatives}
When using the wording ``control derivative", what we mean is that the shape $\Om\in \Ca(\Dsf)$ is identified with its characteristic function $\chi_\Om$, and that we actually consider variations of $\Om$ as variations of $\chi_\Om$.  To give this concept a more precise meaning, let us take a basic example: for every $\Om \in \Ca$, let $u_\Om\in H^{1}(\Dsf)$ be the unique solution of 
\begin{cases22}
-\Delta u_\Om&=\chi_\Om\quad&&\text{ in }\Dsf,\\
 u_\Om&=0 \quad&&\text{ on }\partial\Dsf.
\end{cases22}
Let $E\subset \Dsf$ be either a point or a smooth oriented hypersurface, and assume for the sake of simplicity that $E\subset \Dsf\backslash \overline{\Om}$. Then, for $\eps>0$ small enough, we have, with $\Omega_\eps := \Omega(E_\eps)= \Omega\cup E_\eps$,
\[
\chi_{\Om_\eps}=\chi_{\Om}+\chi_{E_\eps},
\] 
so that, setting  $\mu_\eps=\frac{\chi_{E_\eps}}{|E_\eps|}$,  the function $U_\eps:=\frac{u_{\Om_\eps}-u_\Om}{|E_\eps|}$ solves 
\begin{cases2}{Eq:Main2}
    -\Delta U_\eps &=\mu_\eps &&\quad\text{ in }\Dsf,\\
    U_\eps& =0 &&\quad\text{ on } \partial\Dsf.
\end{cases2}
For each $\eps >0$ the function $\mu_\eps$ is a probability measure on $\Dsf$.
\black

In the case where $E=\{x_0\}$, it is clear that $\mu_\eps \rightharpoonup \delta_{x_0}$ as $\eps\searrow 0$ weakly in the sense of measures and it is then expected that $\{U_\eps\}_{\eps>0}$ converges in some sense to the solution  $U_{\{x_0\},0}\in X$ of the elliptic equation (with measure datum)
 \begin{cases2}{Eq:j}
        -\Delta U_{\{x_0\},0}&=\delta_{x_0}&& \quad \text{ in }\Dsf,\\ 
    U_{\{x_0\},0}&=0 && \quad \text{ on }\partial \Dsf.
\end{cases2}
To make the function space $X(\Dsf)$ precise, we will require some background information on the weak formulation of \eqref{Eq:j}, but what matters is that the topological state derivative appears, in this case, as the Green kernel of $-\Delta$. This simple remark allows to go back from topological derivatives to control derivatives.

\paragraph{Expressing control derivatives via the topological state derivative}
Indeed, assume we wish to compute the control derivative of \eqref{Eq:Main2}; this means that we see $\chi_\Om$ as a function in $L^2(\Dsf)$ and that we consider the control derivative of the state, defined, for a given perturbation $h$, as 
\[ 
    \dot u_h:=\lim_{t \searrow 0}\frac{v_{t h}-u_\Om}{t} 
\] where $v_{t h}\in L^2(\Dsf)$ satisfies \eqref{Eq:Main2} with $\chi_\Om$ replaced with $\chi_\Om+t h$. Then it is clear, by linearity of the equation, that $\dot u_h$ satisfies
\begin{cases22}
-\Delta \dot u_h&=h&&\quad\text{ in }\Dsf, 
\\ \dot u_h&=0&&\quad\text{ on }\partial\Dsf.
\end{cases22}
As we already explained briefly that the topological state derivative coincides with the Green kernel of the operator $-\Delta$, it is reasonable to expect, for instance if $h$ is supported in $\Dsf\backslash\overline\Om$, that $\dot u_h$ writes as 
\[ \dot u_h(x)=\int_{\Dsf\backslash \Om} h(y)U_{\{y\},0}(x)dy.\] Consequently, we see on this simple example that the knowledge of the topological derivative implies that we are able to compute any control-type derivative. One of our objectives in this paper is to prove the validity of this intuitive paradigm in several cases.

Of course, several points need to be underlined here. First and foremost, as should be clear, we need to work with elliptic equations with measure data in order to obtain optimal estimates. Most of this will be done using and adapting the techniques of \cite{Ponce}, which itself relies on the seminal  \cite{Littman} . Second, a lot of care needs to be taken when differentiating nonlinear problems, and giving proper regularity estimates on the fundamental solutions of the linearised operator; here, we  rely on the aforementioned \cite{Littman}. Finally, as we shall see, the weak formulation of the equation on $U_{\{x_0\},0}$ will be strongly dependent on the type of perturbation we consider. While, for perturbation of lower-order terms, the setting correspond to the standard one, we need to introduce a notion of very weak solution when considering transmission type problems.

\paragraph{Asymptotics of the shape-to-state operator of point perturbations}Our goal is now to link the control derivatives and the usual asymptotic analysis of the shape-to-state operator.

 ``Singular" perturbations (\emph{i.e.} removing a ball in the domain) and the asymptotics of PDEs where the singular perturbation appears are usually  treated by introducing so-called ``boundary layer correctors". This approach typically involves working in unbounded domains. Although working with an optimal control approach allows to only work in bounded domains, this limit layer approach is of great importance in topology optimisation and we thus present it in this paragraph.  We refer to \cite{b_MANAPL_2012a,b_MANAPL_2012b} for the asymptotic analysis of such singular perturbations and to \cite{b_NOSOZO_2019a,b_NOSO_2013a} for computations of topological derivatives of shape functionals. In contrast to these 
more classical approaches, we recall in this section the point of view of \cite{a_ST_2020a,a_BAGAST_2021b,a_BAST_2021a}, which, while also using boundary-layer correctors, rescales the domain to keep a fixed size of the inclusion.
As shown in \cite{a_GAST_2020a,a_ST_2020a}, this approach can be advantageous when dealing with semilinear and quasilinear PDEs.  For this reason we give the following definition:

\begin{definition}[Derivative of shape-to-state operator, the rescaled domain approach]\label{De:Main2}
Let $x_0\in \Dsf$. Define $E:=\{x_0\}$ and consider a connected and bounded domain $\omega\subset \VR^d$  with  $0\in \omega$.  For any $\eps>0$, we define the diffeomorphism $T_\eps:\Dsf \ni x\mapsto x_0+\eps x$ and the rescaled domain
\[\Dsf _\eps:=T_\eps^{-1}(\Dsf).\]
Introduce $\omega_\eps(x_0):= x_0 + \eps \omega$ and  define 
\begin{equation}\label{eq:omega_eps}
    \Omega_\eps(x_0,\omega) :=  \begin{cases}
     \Omega\cup \omega_\eps(x_0) &   \text{ for }x_0\in \Dsf\setminus \overline\Omega, \\
     \Omega\setminus \overline{\omega_\eps(x_0)} &  \text{ for } x_0 \in \Omega.
\end{cases}
\end{equation}
Note that according to Definition \ref{De:Main}, we have $\Om_\eps(x_0,\omega)=\Om(\omega_\eps(x_0))$. However, we introduce 
the notation $\Om_\eps(x_0,\omega)$ to emphasise the dependence on  both $x_0$ and $\omega$. Furthermore, we set $u_\eps := u_{\Om_\eps(x_0,\omega)}$, $u_0:= u_\Om$ and finally define
\[
K_\eps:=\frac{(u_\eps - u_0)\circ T_\eps}{\eps}, \quad \eps >0.
\]
The derivative of the shape-to-state operator is 
\begin{equation}\label{Eq:Back} 
     K:= \lim_{\eps\searrow 0} K_\eps,
\end{equation} 
where the limit has to be understood in an appropriate setting. We note that the limit $K$ typically depends on $x_0$, $\omega$ and as well as $\Omega$. As the domain of definition $\Dsf_\eps$ of $K_\eps$ varies with $\eps$, \eqref{Eq:Back} needs to be understood as $\Vert K_\eps-K\Vert_{X(\Dsf_\eps)}\to 0$ for the norm of a suitable function space $X(\Dsf_\eps)$.
\end{definition}

The function $K$ typically satisfies an equation in an unbounded domain. We refer to the later sections for examples and also to \cite{a_ST_2020a,a_BAGAST_2021b,a_BAST_2021a,a_GAST_2020b} for concrete topological derivative examples using the rescaling approach outlined above.

\begin{remark}
Let us underline that this definition covers the case of ball perturbations which corresponds to $\omega=B_1(0)$ in the previous definition, and is more general in the sense that shapes other than a ball are allowed. However, it does not include
lower dimensional objects. This is in contrast with Definition~\ref{De:Main1}.
\end{remark}

\paragraph{Connection between asymptotics of state and topological state derivative}
We consider again the problem of the previous section, namely,
    \begin{cases2}{eq:U_chi}
        -\Delta u_\Om&=\chi_\Om && \quad\text{ in }\Dsf,\\
        u_\Om&=0  && \quad\text{ on }\partial\Dsf.
\end{cases2}
We now sketch the connection between the asymptotic expansion of $u_{\Omega_\eps(x_0,\omega)}$ for $x_0\in \Dsf\setminus\overline{\Omega}$ and the topological state derivative. So we restrict ourselves to point perturbations and note that 
$\Omega_\eps(x_0,\omega) = \Omega\cup \{x_0 + \eps \omega\}$. We only discuss the case $d=3$ and provide the results for $d=2$ in later sections. In the fixed three dimensional domain $\Dsf$, with $f_1=1\,, f_2=0 $, we have \cite{a_BAGAST_2021b} the following expansion of $u_{\Omega_\eps(x_0,\omega)}=:u_\eps$:
\begin{equation}
    u_\eps(x) = u_0(x) + \eps^2(K(T_\eps^{-1}(x)) + \eps v(x)) + \text{ higher order terms}, \quad \text{ for a.e. } x\in \Dsf,
\end{equation}
where $u_0:=u_\Omega$ and $v$ is a regular boundary corrector function defined on the fixed domain $\Dsf$. Then in fact we will show that almost everywhere one can indeed recover the topological state derivative via the limit
\begin{equation}\label{eq:U0_limit}
    U_0(x)  = \lim_{\eps\searrow0} \frac{u_\eps-u_0}{|\omega_\eps|} = \lim_{\eps\searrow 0} \frac{1}{|\omega_\eps|}\eps^2(K(T_\eps^{-1}(x)) + \eps v(x)), \quad x\in \Dsf.
\end{equation}
The function $K$ admits the asymptotic behaviour $K(x) = R(x) + O(|x|^{-2})$ with $R(x):=   |\omega| E(x)$ and  $E(\cdot)$ being the fundamental solution of $-\Delta $ on $\VR^3$:
\begin{equation}
    E(x) = \frac{1}{4\pi|x|},
\end{equation} 
where here and henceforth we denote by $|x|$ the Euclidean norm of a vector $x\in \VR^d$. From the first asymptotic term of $K$, which is $R$, we can also determine the corrector $v\in H^1(\Dsf)$ as the solution of 
\begin{cases2}{}
  -\Delta v &=0 &&\quad\text{ in } \Dsf\,,
  \\ v(x) &= -R(x-x_0)&&\quad\text{ on }\partial \Dsf.
\end{cases2}
Therefore, one can compute the first limit on the right hand side of \eqref{eq:U0_limit} explicitly using $R(T_\eps^{-1}(x)) = \eps R(x-x_0)$:
\begin{equation}
    \lim_{\eps\searrow 0} \frac{1}{|\omega_\eps|}\eps^2(K(T_\eps^{-1}(x)) = \lim_{\eps\searrow 0} \frac{1}{|\omega_\eps|}\eps^2 R(T_\eps^{-1}(x)) = |\omega|^{-1}R(x-x_0) = \frac{1}{ 4\pi|x-x_0|}.
\end{equation}
We note that $x\mapsto R(x-x_0)\in W^{1,q}(\Dsf)$ for $q\in [1,\frac{d}{d-1}) = [1,\frac{3}{2})$. Summarising, we derived the following form of $U_0$:
\begin{equation}
    U_0(x) =  |\omega|^{-1}( R(x-x_0) + v(x)) \quad \text{ for a.e. } x\in \Dsf,
\end{equation}
and thus conclude that $U_0$ is indeed a solution of
\begin{cases2}{eq:limit_U0b}
    -\Delta U_0&=\delta_{x_0} && \quad \text{ in }\Dsf,  \\
    U_0 &=0   && \quad\text{ on } \partial \Dsf.
 \end{cases2}
 The solution $|\omega|^{-1}\left(R(x-x_0)+v(x)\right)$ is a well-known splitting for \eqref{eq:limit_U0b} and is often used in numerical analysis \cite{a_BEDELAMA_2018a,a_ER_1985a}. The function $|\omega|^{-1}R(x-x_0)$ solves the Poisson equation in $\VR^3$ with Dirac measure at $x_0$ as a right hand side, and $|\omega|^{-1}v(x)$ corrects the boundary error introduced by $|\omega|^{-1}R(x-x_0)$, so that $U_0$ has homogeneous Dirichlet boundary conditions on $\partial \Dsf$.  

 In conclusion, the topological state derivative can be obtained from the asymptotic analysis of the state equation. If the asymptotic analysis of the state equation is performed using compound asymptotics \cite{b_MANAPL_2012a,b_MANAPL_2012b,b_NOSO_2013a}, then one naturally obtains a splitting for the limit solution into a regular part, which comes from the corrector $v$ and an irregular part, which originates from the corrector $K$. We will see later that the topological expansion can be effectively used to compute the topological state derivative and even establish strong convergence in suitable function spaces.

\section{Main results for general topological perturbations of semilinear equations}\label{Se:Ell}
We first give some basic results about the convergence in measure of the functions $\chi_{E_\eps}$ (see Definition \ref{De:Main} for the definition of $E_\eps$).
In the following sections, we proceed with steps of increasing complexity, first considering topological state derivatives for lower order terms, then considering transmission problems.

\subsection{Convergence in measure of $\chi_{E_\eps}$ and notation}
Our goal is to make sense of topological derivatives for any type of $d$-dimensional inclusions as proposed in \cite{a_DE_2017a}. For this reason, we need to specify the behaviour of $\chi_{E_\eps}$, as $\eps\to 0$. This is the object of the following proposition; it is stated without a proof as it is fairly standard.

\begin{proposition}\label{Pr:Measure}
For every nonempty compact $E\subset \Dsf$ and for every $\eps>0$ we let $E_\eps$ be its tubular neighborhood (see Definition \ref{De:Main}) and we consider the probability measure  on $\Dsf$
\[
\mu_{E_\eps}:=\frac{\chi_{E_\eps}}{|E_\eps|}. 
\]
\begin{enumerate}
\item Assume $E=\{x_0\}$, so that $E_\eps=B_\eps(x_0)$. Then, for the weak convergence of measures, 
\[ 
\mu_{E_\eps}\underset{\eps\searrow 0}\rightharpoonup \mu_{E}:=\delta_{x_0}.
\]
\item Assume $E=\Gamma$ is a $(d-1)$-dimensional Lipschitz hypersurface with finite perimeter $\mathrm{Per}(\Gamma)$. Then, in the sense of measures, 
\[
    \mu_{E_\eps}\underset{\eps \searrow0}\rightharpoonup \mu_{E}:=\frac1{\mathrm{Per}(\Gamma)} (\Ch^{d-1}\lfloor \Gamma),
\]
where $\Ch^{d-1}\lfloor\Gamma$ stands for the restriction of the $(d-1)$-dimensional Hausdorff measure to $\Gamma$.
\item  Assume that $E=M$ is a $1< k < d-1$ dimensional compact and smooth submanifold (without boundary). Then, in the sense of measures:
\[
    \mu_{E_\eps}\underset{\eps \searrow0}\rightharpoonup \mu_{E}:=\frac1{\Ch^k(M)} (\Ch^k\lfloor M),
\]
\end{enumerate}
\end{proposition}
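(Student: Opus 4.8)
The plan is to verify directly that
\[
\frac{1}{|E_\eps|}\int_{E_\eps}\varphi\,dx\ \longrightarrow\ \frac{1}{\Ch^k(E)}\int_E\varphi\,d\Ch^k\qquad(\eps\searrow0)
\]
for every $\varphi\in C_c(\VR^d)$, where $k=\dim E$. This is exactly what has to be shown, because the three candidate limit measures are in every case the normalised Hausdorff measure $\mu_E=\Ch^k\lfloor E/\Ch^k(E)$ (for $k=0$ this reads $\delta_{x_0}$, with $\Ch^0$ the counting measure; for $k=d-1$ one uses $\mathrm{Per}(\Gamma)=\Ch^{d-1}(\Gamma)$), and because $E$ being compact in the open set $\Dsf$ forces $\operatorname{dist}(E,\partial\Dsf)>0$, so that all the probability measures $\chi_{E_\eps}/|E_\eps|$ are supported in one fixed compact subset of $\Dsf$ and $C_c$ test functions suffice. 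Below, $\kappa_m$ denotes the volume of the unit ball of $\VR^m$.

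First I would dispatch items (a) and (c) — together with (b) when $\Gamma$ is smooth — in one stroke, by a tube‑formula computation. Let $M\subset\Dsf$ be a compact smooth $k$‑submanifold without boundary ($M=\{x_0\}$ if $k=0$). By the tubular neighbourhood theorem there is $\rho>0$ such that, for $0<\eps<\rho$, the map $(y,v)\mapsto y+v$ on the $\eps$‑disk bundle $\{(y,v):y\in M,\ v\in N_yM,\ |v|<\eps\}$ of the normal bundle is a diffeomorphism onto $E_\eps=\{d_M<\eps\}$, under which Lebesgue measure pulls back to $\theta(y,v)\,d\Ch^k(y)\,dv$ with $\theta$ smooth and $\theta(\cdot,0)\equiv1$, hence $\theta(y,v)=1+O(|v|)$ uniformly in $y\in M$ (the Weyl--Gray tube expansion). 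Changing variables, substituting $v=\eps w$, and invoking the uniform continuity of $\varphi$ on a fixed neighbourhood of $M$ then gives
\[
\int_{E_\eps}\varphi\,dx=\int_M\Big(\int_{\{v\in N_yM:\,|v|<\eps\}}\varphi(y+v)\bigl(1+O(|v|)\bigr)\,dv\Big)d\Ch^k(y)=\kappa_{d-k}\,\eps^{d-k}\Big(\int_M\varphi\,d\Ch^k+o(1)\Big),
\]
and, taking $\varphi\equiv1$ near $M$, $|E_\eps|=\kappa_{d-k}\,\eps^{d-k}(\Ch^k(M)+o(1))$; dividing proves the claim. For $k=0$ this degenerates to Lebesgue differentiation of the continuous function $\varphi$ at $x_0$.

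It remains to treat item (b) for a merely Lipschitz hypersurface $\Gamma$ of finite perimeter, where the smooth normal parametrisation is unavailable. Writing $\frac{1}{|E_\eps|}\int_{E_\eps}\psi\,dx=\bigl(\frac1\eps\int_{E_\eps}\psi\,dx\bigr)\big/\bigl(\frac1\eps\int_{E_\eps}1\,dx\bigr)$, the assertion reduces to the weighted Minkowski‑content limit
\[
\frac1\eps\int_{E_\eps}\psi\,dx\ \longrightarrow\ 2\int_\Gamma\psi\,d\Ch^{d-1}\qquad(\eps\searrow0)\quad\text{for all }\psi\in C_c(\VR^d),
\]
the constant $2$ (from the symmetric tube of Definition~\ref{De:Main}) cancelling in the quotient and leaving $\mathrm{Per}(\Gamma)^{-1}\,\Ch^{d-1}\lfloor\Gamma$. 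For $\psi\equiv1$ this limit is the classical identity between the $(d-1)$‑dimensional Minkowski content and the Hausdorff measure of a closed $(d-1)$‑rectifiable set (a theorem of Federer); for general $\psi$ I would localise by a partition of unity subordinate to a finite atlas in which $\Gamma$ is a Lipschitz graph, discard the part of $\psi$ supported at positive distance from $\Gamma$ (which vanishes on $E_\eps$ once $\eps$ is small), and in each chart either quote the localised content statement or mollify the graph, apply the smooth tube formula, and pass to the limit in the mollification parameter — the coarea formula for the $1$‑Lipschitz function $d_\Gamma$ (with $|\nabla d_\Gamma|=1$ a.e.\ on $\{d_\Gamma>0\}$) being a convenient device for organising that last passage via the level sets $\{d_\Gamma=t\}$ as $t\downarrow0$.

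The point and smooth cases are entirely routine, so I expect the one genuinely delicate point to be the Lipschitz hypersurface: controlling the volume and the weighted volume of the $\eps$‑tube of a non‑smooth hypersurface and ruling out mass escaping to positive distance from $\Gamma$, which is precisely where one must invoke geometric measure theory (rectifiability, the area formula in Lipschitz charts, Federer's Minkowski‑content theorem). If (b) were restricted to $C^1$ or $C^{1,1}$ hypersurfaces, this case too would collapse into the tube‑formula computation of the first step — no doubt the reason the proposition is only asserted here.
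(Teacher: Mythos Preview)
Your proposal is sound. The paper does not actually supply a proof of this proposition: it is ``stated without a proof as it is fairly standard'' and the reader is referred to \cite[Theorem 2.15]{a_DE_2022a}. So there is no in-paper argument to compare against; you have filled in what the authors chose to omit.

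Your treatment of (a) and (c) via the tubular neighbourhood theorem and the Weyl tube expansion is the standard route and is correct. You are also right that (b) in the merely Lipschitz case is the only genuinely nontrivial item: the reduction to a weighted Minkowski-content limit and the appeal to Federer's theorem on the equality of Minkowski content and Hausdorff measure for closed rectifiable sets is exactly the right tool. Your closing remark --- that for $C^1$ hypersurfaces (b) collapses into the same tube computation, and that this is presumably why the proposition is only asserted --- matches the authors' own stance.

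One minor point: in your localisation sketch for Lipschitz $\Gamma$ you mention both mollifying the graph and using the coarea formula for $d_\Gamma$. Either works, but you do not need both; the cleanest line is probably to invoke Federer's Minkowski-content theorem directly (it already handles the rectifiable Lipschitz case) together with a partition-of-unity argument to pass from the constant test function to a general $\psi\in C_c(\VR^d)$. That said, what you wrote is a correct outline and would go through.
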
 
We refer to \cite[Theorem 2.15]{a_DE_2022a} for a proof.

\begin{remark}
    \begin{itemize}
        \item For $E=\{x_0\}$ and $E_\eps=B_\eps(x_0)$ $\eps>0$ the weak convergence of measures (a) means for all $\varphi \in C^0(\overline{\Dsf})$:
        \begin{equation}
            \int_\Dsf \mu_{E_\eps} \varphi\;dx = \frac{1}{|B_\eps(x_0)|}\int_{B_\eps(x_0)} \varphi\;dx   \to \varphi(x_0) \quad \text{ as } \eps\searrow 0.
    \end{equation}
\item For $E=\Gamma$ is a $(d-1)$-dimensional Lipschitz hypersurface with finite perimeter $\mathrm{Per}(\Gamma)$, the weak convergence of measures (b) means for all $\varphi \in C^0(\overline{\Dsf})$:
\begin{equation}\label{eq:weak_conv_surf}
    \int_\Dsf \mu_{E_\eps} \varphi\;dx = \frac{1}{|E_\eps |}\int_{E_\eps} \varphi\;dx \to \frac1{\mathrm{Per}(\Gamma)}\int_{\Gamma} \varphi \; d\Ch^{d-1} \quad \text{ as } \eps \searrow 0.
\end{equation}
    \end{itemize}
\end{remark}

\begin{remark}
When $E$ is a hypersurface, we can actually prove that the convergence holds for the duality on $W^{1,p}(\Dsf)$. This means that \eqref{eq:weak_conv_surf} holds for every function $\varphi\in W^{1,p}(\Dsf)$, for $p\in [1,\infty)$, when we replace the last integral with $\int_\Gamma \mathrm{Tr}_\Gamma(\varphi)\;d\Ch^{d-1}$, where $\mathrm{Tr}_\Gamma$ is the trace operator on $\Gamma$.\black
\end{remark}
Throughout the paper, we retain the notation $\mu_{E}$ for the limit measures given in Proposition \ref{Pr:Measure}.

\begin{remark}[Lower dimensional inclusion]
Of course, what we considered here was the removal of a $d$-dimensional object: $E_\eps$ has nonempty interior. It is natural to wonder what might happen if we were to remove lower dimensional objects, for instance removing a centered disk in a three-dimensional object. We believe that our analysis would still be valid but, for the sake of readability, we stick with the removal of tubular neighborhoods.
\end{remark}

\paragraph{Notation}
In Definition \ref{De:Main} we have considered two types of perturbations, one consisting in adding some material outside of $\Om$, the other in removing some material from $\Om$. Naturally, this means that, depending on the case considered, either $\mu_{E_\eps}$ or $-\mu_{E_\eps}$ is involved in the linearised system. In order to alleviate notations and to not carry out moot distinctions, for every compact subset $E$ such that $E\subset \Om$ or $E\subset \Dsf \backslash \overline \Om$ we define
\begin{equation}\label{Eq:SignOm}
\mathrm{sgn}_\Om(E):=
\begin{cases}
    +1 &\text{ if }E\subset \Dsf\backslash \overline\Om\,, \\
    -1 &\text{ if } E\subset \Om.
\end{cases}
\end{equation}

\subsection{Topology optimisation problems for semilinear equations with monotone semilinearity}\label{se:sl}
\paragraph{Analytic setting}
The first problem we tackle is that of a semilinear elliptic equation, where $\Om\in \Ca(\Dsf)$ appears in the nonlinearity. 

 To be precise, we consider two coefficients $f_i\in\VR$ ($i=1,2$), as well as two nonlinearities $g_i=g_i(u)$ ($i=1,2$) that satisfy
\begin{equation}\label{Hyp:g}
g_i \text{ is $ C^1$ increasing in $u$, and is globally bounded in $\VR_+$ $(i=1,2).$ }
\end{equation}
We then define, for every $\Om\in \Ca(\Dsf)$, a nonlinearity $\rho_\Om=\rho_\Om(x,u)$ as 
\[ 
    \rho_\Om(x,u):=\chi_\Om(x)g_1(u)+\chi_{\Dsf\backslash\Om}(x)g_2(u).
\]
From \cite[Theorem 4.4]{b_TR_2010a}, if \eqref{Hyp:g} is satisfied, then, for every $\Om \in \Ca(\Dsf)$, the equation 
\begin{cases2}{Eq:MainSemiLinear}
-\Delta u_\Om+\rho_\Om(x,u_\Om)&=f_1\chi_\Om+f_2\chi_{\Dsf\backslash\overline\Om}&&\quad\text{ in }\Dsf, 
\\ u_\Om&=0&&\quad\text{ on }\partial\Dsf,
\end{cases2} 
has a unique solution $u_\Om \in H^1_0(\Om)$. By standard elliptic regularity, for every $p\in [1,\infty)$, $u_\Om\in W^{2,p}(\Dsf)$ so that $u_\Om\in C^1(\overline\Dsf)$. 
We now study the topological state derivative  of $\Om\mapsto u_\Om$. To give meaning to our afferent results, we need to lay down some basic definitions on the linearised system.

\paragraph{Basic computations}
Our subsequent analysis strongly hinges on the property of the linearised operator associated with \eqref{Eq:MainSemiLinear}. To justify the use of this linearisation, we simply observe that $U_\eps:=\frac{u_{\Om(E_\eps)}-u_\Om}{|E_\eps|}$ satisfies $U_\eps=0$ on $\partial \Dsf$ and in a weak $W^{1,q}_0(\Dsf)$-sense, the following equation in $\Dsf$:
\begin{multline*}
        -\Delta U_{\eps}+\chi_\Om\frac{(g_1(u_{\eps})-g_1(u_{0})-g_2(u_{\eps})+g_2(u_{0}))}{|E_\eps|}+\frac{g_2(u_{\eps})-g_2(u_0)}{|E_\eps|}\\
        =\mathrm{sgn}_\Om(E)\left[(g_2(u_{\eps})-g_1(u_{\eps}))+(f_1-f_2)\right]\mu_{E_\eps},
\end{multline*}
 where we used the simplified notation $u_\eps:=u_{\Omega(E_\eps)}$ and $u_0:=u_\Om$, and we should thus obtain, as $\eps\searrow 0$, the following limit equation:
 \begin{cases3}{}
     -\Delta U_0+\frac{\partial\rho_0}{\partial u}(x,u_0)& =\mathrm{sgn}_\Om(E)\left[ (g_2(u_0)-g_1(u_0))+(f_1-f_2)\right]\mu_E &&  \quad \text{ in } \Dsf, \\
     U_0 & =0 && \quad \text{ on } \partial \Dsf,
\end{cases3}
which has to be understood in a weak $W^{1,q}(\Dsf)$ sense for $q>d$ and will be explained in the next paragraph. In the following paragraph we give some background information about the linear operator used to define the linear equation on $U_0$.

\paragraph{Notion of weak solution for the linearised system}
The linearised operator associated with \eqref{Eq:MainSemiLinear} is defined as 
\begin{equation}\label{Eq:LSemiLinear} \mathcal L_\Om:u\mapsto -\Delta u+\frac{\partial \rho_\Om}{\partial u}(x,u_\Om)u.\end{equation}As we explained in Section \ref{Se:BasicDefinition}, topological state derivatives ``should", in a sense made precise below, solve an equation of the form $-\mathcal L_\Om u=\mu$ for some probability measure $\mu$, with homogeneous Dirichlet boundary conditions. Even in the case of the Laplacian there are natural Sobolev bounds on the regularity to be expected from solutions of such equations. This motivates the following definition.

\begin{definition}\label{De:WeakSolutionSemiLinear}
Let $\mathcal M(\Dsf)$ be the set of Borel measures in $D$ with finite total variation. Let $\mu \in \mathcal M(\Dsf)$. For every $q\in [1,\frac{d}{d-1})$, we say that a function $u\in W^{1,q}_0(\Dsf)$ is a weak $W^{1,q}_0$-solution of 
\begin{cases2}{Eq:MainLinearisedSemiLinear}
    \mathcal L_\Om u& =\mu && \quad\text{ in }\Dsf, \\ 
u& =0 && \quad\text{ on }\partial \Dsf,
\end{cases2}
if, for every function $\varphi\in W^{1,q^\prime}_0(\Dsf)$ with $\frac{1}{q}+\frac{1}{q^\prime}=1$, there holds
\begin{equation}\label{eq:radon_measure}
\int_\Om  \nabla u\cdot\nabla \varphi\; dx+\int_\Om \frac{\partial \rho_\Om}{\partial u}(x,u_\Om) u\varphi\;dx=\langle \varphi,\mu\rangle,
\end{equation}
where the last duality bracket is to be understood in the sense of the duality between continuous functions and measures.
\end{definition}
It should be noted that since $1\le q<\frac{d}{d-1}$, the conjugate Lebesgue exponent $q^\prime$ of $q$, $\frac{1}{q}+\frac{1}{q^\prime}=1$, satisfies $q^\prime>d$. From Sobolev embeddings this implies that the duality bracket 
$\langle \varphi,\mu\rangle$ in \eqref{eq:radon_measure} is well-defined. Definition \ref{De:WeakSolutionSemiLinear} is a standard notion of weak solution for elliptic equations with measure data \cite{Boccardo,Ponce}.

As a first consequence of \eqref{Hyp:g} we prove that  \eqref{Eq:MainLinearisedSemiLinear} is well-posed.
\begin{proposition}\label{Pr:LinearisedSemiLinear}
If $g_1\,, g_2$ satisfy \eqref{Hyp:g} then, for every finite Borel measure $\mu$ in $\Dsf$, the equation \eqref{Eq:MainLinearisedSemiLinear} is well-posed: for every $q\in [1,\frac{d}{d-1})$ there exists a unique solution $u\in W^{1,q}_0(\Dsf)$ of \eqref{Eq:MainLinearisedSemiLinear}. Furthermore, there exists a constant $C_q$ independent of $\mu$ such that 
\[
\Vert u\Vert_{W^{1,q}(\Dsf)}\leq C_q \Vert \mu\Vert_{\mathcal M(\Dsf)}.
\]
\end{proposition}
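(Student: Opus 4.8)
The plan is to recognise \eqref{Eq:MainLinearisedSemiLinear} as a textbook linear elliptic equation with measure datum and coercive (indeed self-adjoint) principal form, and to prove well-posedness by the classical duality method of Stampacchia, in the spirit of \cite{Ponce,Boccardo,Littman}. First I would record the structural facts about $\mathcal L_\Om$. Since $u_\Om\in C^1(\overline\Dsf)$ by the regularity already noted for \eqref{Eq:MainSemiLinear}, its range is a compact interval, and because each $g_i$ is $C^1$ and increasing, the coefficient
\[
c_\Om(x):=\frac{\partial\rho_\Om}{\partial u}(x,u_\Om)=\chi_\Om(x)\,g_1'(u_\Om(x))+\chi_{\Dsf\setminus\Om}(x)\,g_2'(u_\Om(x))
\]
belongs to $L^\infty(\Dsf)$ and satisfies $c_\Om\ge 0$ a.e. Hence the bilinear form $a(u,v):=\int_\Dsf(\nabla u\cdot\nabla v+c_\Om uv)\,dx$ is bounded, symmetric and, by Poincaré's inequality, coercive on $H^1_0(\Dsf)$, and $\mathcal L_\Om w=-\Delta w+c_\Om w$ with $0\le c_\Om\in L^\infty$ falls under the De Giorgi--Stampacchia $L^\infty$-theory. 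Fix $q\in[1,\tfrac{d}{d-1})$ with conjugate $q'$, so $q'>d$; recall $W^{1,q'}_0(\Dsf)\subset H^1_0(\Dsf)$ (bounded domain, $q'>2$) and $H^1_0(\Dsf)$ dense in $W^{1,q}_0(\Dsf)$ (as $q<2$).

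I would then establish the a priori estimate and uniqueness simultaneously, by duality. Since $C^\infty_c(\Dsf;\VR^d)$ is dense in $L^{q'}(\Dsf;\VR^d)$ and $\|\nabla u\|_{L^q}=\sup\{\int_\Dsf\nabla u\cdot\phi\,dx:\phi\in C^\infty_c(\Dsf;\VR^d),\ \|\phi\|_{L^{q'}}\le1\}$, it suffices to bound $\int_\Dsf\nabla u\cdot\phi\,dx$. Given such a $\phi$, let $w\in H^1_0(\Dsf)$ be the Lax--Milgram solution of $a(w,v)=\int_\Dsf\phi\cdot\nabla v\,dx$ for all $v\in H^1_0(\Dsf)$, i.e. of $\mathcal L_\Om w=-\Div\phi$. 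Bootstrapping from $w\in H^1_0$, $L^p$-elliptic regularity gives $w\in W^{1,q'}_0(\Dsf)$, and Stampacchia's $L^\infty$-estimate --- valid precisely because $q'>d$ --- gives $w\in L^\infty(\Dsf)$ with $\|w\|_{L^\infty(\Dsf)}\le C\|\phi\|_{L^{q'}(\Dsf)}$. Testing \eqref{eq:radon_measure} for $u$ against $w\in W^{1,q'}_0(\Dsf)$, and extending the identity $a(w,v)=\int_\Dsf\phi\cdot\nabla v\,dx$ from $v\in H^1_0$ to $v=u\in W^{1,q}_0$ by density (all pairings are continuous in the $W^{1,q}$-norm since $w\in W^{1,q'}_0$), the symmetry of $a$ yields
\[
\int_\Dsf\phi\cdot\nabla u\,dx=a(w,u)=a(u,w)=\langle w,\mu\rangle .
\]
Therefore $\bigl|\int_\Dsf\phi\cdot\nabla u\,dx\bigr|\le\|w\|_{L^\infty}\|\mu\|_{\mathcal M(\Dsf)}\le C\|\phi\|_{L^{q'}}\|\mu\|_{\mathcal M(\Dsf)}$, so $\|\nabla u\|_{L^q(\Dsf)}\le C\|\mu\|_{\mathcal M(\Dsf)}$, and Poincaré's inequality gives $\|u\|_{W^{1,q}(\Dsf)}\le C_q\|\mu\|_{\mathcal M(\Dsf)}$. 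Taking $\mu=0$ gives uniqueness in $W^{1,q}_0(\Dsf)$ for each admissible $q$.

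For existence I would approximate: pick $\mu_n\in C^\infty_c(\Dsf)$ (mollifications cut off to $\Dsf$) with $\mu_n\rightharpoonup\mu$ weakly-$*$ in $\mathcal M(\Dsf)$ and $\|\mu_n\|_{L^1(\Dsf)}\le\|\mu\|_{\mathcal M(\Dsf)}$. Lax--Milgram gives a unique $u_n\in H^1_0(\Dsf)$ with $a(u_n,v)=\int_\Dsf\mu_n v\,dx$ for all $v\in H^1_0$; since $W^{1,q'}_0\subset H^1_0$, each $u_n$ is in particular a weak $W^{1,q}_0$-solution of \eqref{Eq:MainLinearisedSemiLinear} with datum $\mu_n$, so the estimate above gives $\|u_n\|_{W^{1,q}(\Dsf)}\le C_q\|\mu\|_{\mathcal M(\Dsf)}$ uniformly in $n$, for every $q\in[1,\tfrac{d}{d-1})$. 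Choosing one exponent $q_1\in(1,\tfrac{d}{d-1})$ and using reflexivity of $W^{1,q_1}_0(\Dsf)$, extract $u_n\rightharpoonup u$ in $W^{1,q_1}_0(\Dsf)$; the continuous embeddings $W^{1,q_1}_0(\Dsf)\hookrightarrow W^{1,q}_0(\Dsf)$ (bounded domain, $q\le q_1$) give $u_n\rightharpoonup u$ weakly in every such $W^{1,q}_0$, and weak lower semicontinuity of the norms transfers the bounds to $u$. Passing to the limit in $a(u_n,\varphi)=\int_\Dsf\mu_n\varphi\,dx$ for fixed $\varphi\in W^{1,q'}_0(\Dsf)\subset C^0(\overline\Dsf)$ (using $a(u_n,\varphi)\to a(u,\varphi)$ by weak convergence and $\int_\Dsf\mu_n\varphi\,dx\to\langle\varphi,\mu\rangle$ by weak-$*$ convergence against the continuous test function) shows $u$ solves \eqref{eq:radon_measure}; by uniqueness this single $u$ works for all $q$.

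The \textbf{main obstacle} is the duality step: one needs the $L^\infty$-bound $\|w\|_{L^\infty}\le C\|\phi\|_{L^{q'}}$ for the (here self-adjoint) dual problem, which is the De Giorgi--Stampacchia estimate and hinges on $q'>d$ --- this is exactly where the threshold $q<\tfrac{d}{d-1}$ enters --- together with the bookkeeping required to use the $H^1$-solution $w$ as a test function in the low-regularity formulation for $u$ and conversely; once the two formulations are matched via the density of $H^1_0$ in $W^{1,q}_0$ and the $W^{1,q'}$-regularity of $w$, the rest is routine. If one prefers to avoid invoking Stampacchia's estimate directly, the same $W^{1,q}$-bound can be obtained by the Boccardo--Gallou\"et truncation method applied to the approximants $u_n$, using the truncations $T_k(u_n)$ as test functions.
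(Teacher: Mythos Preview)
Your proof is correct but follows a route different from the paper's. The paper reduces Proposition~\ref{Pr:LinearisedSemiLinear} to an auxiliary result (Proposition~\ref{Pr:Stampacchia}) for $-\Delta+\Psi$ with $\Psi\in L^\infty$ and positive first eigenvalue, and there obtains the $W^{1,q}$-bound in two stages: first an $L^1$-estimate on the approximants $v_k$ by duality against solutions of $-\Delta\theta_h+\Psi\theta_h=h$ for $h\in L^\infty$ (controlling $\|\theta_h\|_{L^\infty}$ via $W^{2,p}$-regularity); then, with $\|v_k\|_{L^1}$ bounded, the lower-order term $\Psi v_k$ is absorbed into the datum, the equation is rewritten as $-\Delta v_k=\tilde\mu_k$, and the $W^{1,q}$-estimate for the pure Laplacian with measure data \cite[Proposition~4.1]{Ponce} is invoked as a black box. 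You instead get the $W^{1,q}$-bound in a single duality step, solving $\mathcal L_\Om w=-\Div\phi$ with $\phi\in L^{q'}$ and using the De~Giorgi--Stampacchia $L^\infty$-estimate directly; the threshold $q'>d$ enters at the same place, but no reduction to the Laplacian is needed. Your argument is more self-contained and delivers uniqueness and the a~priori estimate simultaneously, at the cost of the density and regularity bookkeeping needed to match the $H^1_0$- and $W^{1,q}_0$-formulations of the dual problem; the paper's detour through $L^1$ and \cite{Ponce} avoids that bookkeeping but relies on an external result.
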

It is obvious by the inclusion of the Lebesgue spaces $L^p$ that $u$ does not depend on the exponent $q$ and we abbreviate the first point of this proposition as ``there exists a unique weak solution $u$ to \eqref{Eq:MainLinearisedSemiLinear} that further satisfies that for every $q\in [1,\frac{d}{d-1})$, $u \in W^{1,q}_0(\Dsf)$.'' Finally, observe that the Sobolev space in which \eqref{Eq:MainLinearisedSemiLinear} is well-posed depends on the space $\mu_E$ is in the dual of; in the case where $E=\{x_0\}$, $\mu_E$ is only in the dual space of $W^{1,q}(\Dsf)$, $q>d$. In the case $E=\Gamma$, $\mu_E$ is in the dual of all Sobolev spaces by the theory of Sobolev traces.

\paragraph{Expression of the topological state derivative}
Our main result here is the following theorem (recall that $\mu_{E}$ is defined in Proposition \ref{Pr:Measure})
\begin{theorem}\label{Th:MainSemiLinear}
 Let $E\subset \Om$ or $E\subset \Dsf\backslash \overline{\Om}$ be either a point, $d-1$-dimensional Lipschitz surface or $1<k<d-1$ dimensional compact and smooth submanifold (without boundary). For every $\eps>0$ we define $U_\eps:=\frac{u_{\Om(E_\eps)}-u_\Om}{|E_\eps|}$. Then, for every $q\in [1,\tfrac{d}{d-1})$,
\[
U_\eps\underset{\eps\searrow 0}\rightarrow U_{0}\text{ strongly in $W^{1,q}_0(\Dsf)$,}
\] 
where $U_{0}$ is the unique solution to 
\begin{cases2}{Eq:UeSemiLinear}
-\Delta U_{0}+\partial_u\rho_\Om(x,u_\Om)U_{0}&=\mathrm{sgn}_\Om(E)\left\{(g_2(u_\Om)-g_1(u_\Om))+(f_1-f_2)\right\}\mu_{E}&&\quad\text{ in }\Dsf, 
\\ U_{0}&=0&&\quad\text{ on }\partial \Dsf.
\end{cases2}
\end{theorem}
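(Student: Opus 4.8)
The plan is to establish strong $W^{1,q}_0(\Dsf)$ convergence of the difference quotients $U_\eps$ to the solution $U_0$ of \eqref{Eq:UeSemiLinear} by combining a priori estimates, the convergence in measure from Proposition \ref{Pr:Measure}, and the well-posedness of the linearised equation from Proposition \ref{Pr:LinearisedSemiLinear}. First I would record the equation satisfied by $U_\eps$, exactly as computed in the ``Basic computations'' paragraph: in the weak $W^{1,q^\prime}_0$-sense,
\begin{equation*}
-\Delta U_\eps + \chi_\Om\, a_\eps U_\eps + b_\eps U_\eps = \mathrm{sgn}_\Om(E)\left[(g_2(u_\eps)-g_1(u_\eps))+(f_1-f_2)\right]\mu_{E_\eps},
\end{equation*}
where, using the $C^1$ regularity of $g_1,g_2$ together with the mean value theorem, $a_\eps := \int_0^1 (g_1'-g_2')(u_0+t(u_\eps-u_0))\,dt$ and $b_\eps := \int_0^1 g_2'(u_0+t(u_\eps-u_0))\,dt$ are nonnegative bounded functions (here using that $u_\eps, u_0 \geq 0$ and that $g_i$ are increasing). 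The right-hand side is a bounded family of measures on $\Dsf$ with total variation controlled independently of $\eps$, since $g_1,g_2$ are globally bounded on $\VR_+$ and $\mu_{E_\eps}$ is a probability measure.

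Next I would derive uniform bounds. Testing the equation for $U_\eps$ against a suitable test function and invoking the Stampacchia duality method for elliptic equations with measure data (as in \cite{Ponce}, or simply the a priori estimate underlying Proposition \ref{Pr:LinearisedSemiLinear}, applied with the $\eps$-dependent zeroth-order coefficient which is nonnegative and hence causes no loss), I obtain $\Vert U_\eps\Vert_{W^{1,q}(\Dsf)} \leq C_q$ for every $q\in[1,\tfrac{d}{d-1})$, with $C_q$ independent of $\eps$. By reflexivity and Rellich--Kondrachov, up to a subsequence $U_\eps \rightharpoonup U_\star$ weakly in $W^{1,q}_0(\Dsf)$ and strongly in $L^s(\Dsf)$ for $s<\tfrac{d}{d-2}$; in particular $U_\eps \to U_\star$ a.e. Simultaneously, since $u_\eps \to u_0$ in $C^1(\overline\Dsf)$ (standard elliptic regularity applied to \eqref{Eq:MainSemiLinear} together with the $L^\infty$ and $W^{2,p}$ bounds, plus the fact that the perturbed data converge in every $L^p$), the coefficients satisfy $a_\eps \to (g_1'-g_2')(u_0)$ and $b_\eps \to g_2'(u_0)$ uniformly on $\overline\Dsf$, and $(g_2(u_\eps)-g_1(u_\eps))+(f_1-f_2) \to (g_2(u_0)-g_1(u_0))+(f_1-f_2)$ uniformly. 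Combining the uniform convergence of the scalar coefficient on the right with the weak-$*$ convergence $\mu_{E_\eps}\rightharpoonup \mu_E$ from Proposition \ref{Pr:Measure}, the right-hand sides converge weakly-$*$ as measures to $\mathrm{sgn}_\Om(E)[(g_2(u_0)-g_1(u_0))+(f_1-f_2)]\mu_E$. Passing to the limit in the weak formulation — using the uniform convergence of the coefficients to handle the zeroth-order terms against a fixed $W^{1,q^\prime}_0$ test function, and noting $\chi_\Om a_\eps + b_\eps \to \partial_u\rho_\Om(\cdot,u_\Om)$ a.e.\ and boundedly — identifies $U_\star$ as a weak $W^{1,q}_0$-solution of \eqref{Eq:UeSemiLinear}. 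By the uniqueness part of Proposition \ref{Pr:LinearisedSemiLinear}, $U_\star = U_0$ and the whole family (not just a subsequence) converges.

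Finally, to upgrade weak convergence to strong convergence in $W^{1,q}_0(\Dsf)$, I would use the quantitative Stampacchia/Boccardo--Gallou\"et-type stability estimate: the difference $U_\eps - U_0$ solves a linear problem whose right-hand side is the difference of the two measures plus a zeroth-order remainder coming from the gap between the $\eps$-coefficients and the limit coefficients times $U_0$. One shows this right-hand side tends to zero in the relevant dual norm — the measure part converges to zero in the dual of $C^0$, hence in $W^{-1,q}$ by the duality/regularity estimate, and the coefficient-remainder part tends to zero in $L^p$ for $p$ large since $U_0\in W^{1,q}_0 \subset L^{q^*}$ and the coefficients converge uniformly — and then the a priori estimate of Proposition \ref{Pr:LinearisedSemiLinear} applied to $U_\eps - U_0$ gives $\Vert U_\eps - U_0\Vert_{W^{1,q}(\Dsf)}\to 0$. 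I expect the main obstacle to be the strong-convergence step rather than the identification of the limit: one must be careful that the zeroth-order remainder $(\partial_u\rho_\Om(\cdot,u_\Om) - \chi_\Om a_\eps - b_\eps)U_0$ is only controlled in a low-integrability space because $U_0$ itself is only $W^{1,q}_0$ with $q<\tfrac{d}{d-1}$, so the duality pairing driving the estimate must be set up with care (testing in $W^{1,q^\prime}_0$ with $q^\prime>d$ and exploiting that the coefficient gap is small in $L^\infty$, not merely bounded). An alternative, perhaps cleaner, route for this last step is to run the Stampacchia truncation argument directly on $U_\eps - U_0$, which also handles the case $E=\Gamma$ uniformly (there, as noted after Proposition \ref{Pr:LinearisedSemiLinear}, $\mu_E$ lies in the dual of every $W^{1,p}$, so one in fact gets convergence in a better space, but the argument above already delivers the stated $W^{1,q}_0$ conclusion in all cases).
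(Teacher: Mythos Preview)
Your identification of the limit is essentially the paper's argument: write the equation for $U_\eps$, obtain uniform $W^{1,q}$ bounds from the estimate underlying Proposition~\ref{Pr:LinearisedSemiLinear} (the zeroth-order coefficient being nonnegative), extract a weak $W^{1,q}$/strong $L^q$ convergent subsequence, pass to the limit using $u_\eps\to u_0$ in $C^0(\overline\Dsf)$ and Proposition~\ref{Pr:Measure}, and conclude by uniqueness. (Two minor corrections: $a_\eps=\int_0^1(g_1'-g_2')(\dots)\,dt$ need not be nonnegative---only the combined coefficient $\chi_\Om a_\eps+b_\eps$ is, and that is all you need; also the paper neither asserts nor uses $u_\eps,u_0\ge 0$.)

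The divergence is in the strong-convergence step, and there your proposal has a genuine gap. You claim the measure part of the right-hand side ``converges to zero in the dual of $C^0$, hence in $W^{-1,q}$''. The first assertion is false: the dual-of-$C^0$ norm is the total variation norm, and for $E=\{x_0\}$ one has $\|\mu_{E_\eps}-\delta_{x_0}\|_{\mathcal M(\Dsf)}=2$ for every $\eps>0$. It \emph{is} true that the measures converge strongly in $W^{-1,q}$ for $q<\tfrac{d}{d-1}$, but this follows from weak-$*$ convergence, boundedness in $\mathcal M(\Dsf)$, and the compact embedding $W^{1,q'}_0\hookrightarrow C^0$; it is not a consequence of the (false) TV-convergence. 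Even granting this, applying ``the a priori estimate of Proposition~\ref{Pr:LinearisedSemiLinear}'' to $U_\eps-U_0$ does not close the argument, since that estimate bounds $\|u\|_{W^{1,q}}$ by $\|\mu\|_{\mathcal M}$, not by $\|\mu\|_{W^{-1,q}}$; you would need to invoke $L^q$ elliptic theory for $-\Delta+\Psi_\eps$ with uniformly bounded nonnegative $\Psi_\eps$ to get the required $W^{-1,q}\to W^{1,q}$ stability.

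The paper avoids all of this. Having already obtained $U_\eps\to U_0$ strongly in $L^q$ (hence in $L^1$) from Rellich, it rewrites the equation as $-\Delta U_\eps=\tilde\mu_\eps$ with $\tilde\mu_\eps$ bounded in $L^1$, and then cites the compactness result \cite[Assertion~(21)]{Boccardo} (see also \cite[Proposition~4.9]{Ponce}): if $-\Delta v_n=\nu_n$ with $\{\nu_n\}$ bounded in $L^1$ and $v_n\to v$ in $L^1$, then $\{v_n\}$ is strongly precompact in $W^{1,q}_0$ for every $q<\tfrac{d}{d-1}$. This yields strong $W^{1,q}_0$ convergence directly, without any $W^{-1,q}$ stability estimate or analysis of the difference equation.
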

Observe that, since $u_\Om\in C^1(\overline\Dsf)$ and since $g_1\,, g_2$ are continuous, the product appearing on the right-hand side of \eqref{Eq:UeSemiLinear} is indeed a Borel measure. In addition, $U_0$ depends on $\Omega$ and $E$.


\paragraph{Expression of control derivatives using the topological state derivative}
We mentioned in the introduction of this paper that a control point of view allows to obtain the topological state derivative. Conversely, in low dimensions, the knowledge of the topological state derivatives enables the recovery of control derivatives. In this context,  and to make our statement more precise, let us recall that, seeing $\chi_\Om$ as a function in $L^2(\Dsf)$, we may extend the definition of $u_\Om$ by defining, for every $f\in L^2(\Dsf)$, $u_f$ as the unique solution of \eqref{Eq:MainSemiLinear} with $\chi_\Om$ replaced with $f$, and $\chi_{\Dsf\backslash\overline\Om}$ replaced with $(1-f)$. The $L^2$-differentiability of the map $f\mapsto u_f$ is standard. For every $h\in L^2(\Omega)$, let 
\[
    \dot u_{\chi_\Om,h}=\lim_{t\searrow0}\frac{u_{\chi_\Om+th}-u_{\chi_\Om}}{t}
\]
be the directional derivative of $f\mapsto u_f$ at $\chi_\Om$ in direction $h$. Recalling that $\mathcal L_\Om$ was defined in \eqref{Eq:LSemiLinear}, $\dot u_{\chi_\Om,h}$ solves (in the weak $H^{1}_0(\Dsf)$  sense):
\begin{cases2}{Eq:Udot}
\mathcal L_\Om \dot u_{\chi_\Om,h}&=h\left\{(g_2(u_\Om)-g_1(u_\Om))+(f_1-f_2)\right\}&&\quad\text{ in }\Dsf,
\\ \dot u_{\chi_\Om,h}&=0&&\quad\text{ on } \partial\Dsf.
\end{cases2}

Our second theorem is the following:
\begin{theorem}\label{Th:Main2SemiLinear}
Assume $d\in\{2,3\}$. Then, for every $h\in L^2(\Dsf)$, $\dot u_{\chi_\Om,h}$ admits the following representation: for a.e. $x\in \Dsf$,
\begin{equation}\label{Eq:GKSemiLinear}
    \dot u_{\chi_\Om,h}(x)=\int_{\Dsf} \mathrm{sgn}_\Om(y)U_{\{y\},0}(x)h(y)dy.
\end{equation}
where $U_{\{y\},0}$ is the solution of \eqref{Eq:MainLinearisedSemiLinear} with $\mu=\delta_y$.
\end{theorem}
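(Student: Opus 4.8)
The plan is to recognise that $\mathrm{sgn}_\Om(y)\,U_{\{y\},0}$ equals, up to the fixed scalar weight $\psi(y):=(g_2(u_\Om(y))-g_1(u_\Om(y)))+(f_1-f_2)$, the Green kernel of the self-adjoint coercive operator $\mathcal L_\Om$, and then to deduce \eqref{Eq:GKSemiLinear} by superposition together with a duality argument, the restriction $d\in\{2,3\}$ entering only to guarantee the integrability and pointwise-trace facts that make this licit. First I would record the objects at hand: $u_\Om\in C^1(\overline\Dsf)$ and the continuity of the $g_i$ give $\psi\in C^0(\overline\Dsf)\cap L^\infty(\Dsf)$; by \eqref{Eq:Udot}, $\dot u_{\chi_\Om,h}\in H^1_0(\Dsf)$ is the unique weak solution of $\mathcal L_\Om\dot u_{\chi_\Om,h}=h\psi\in L^2(\Dsf)$, and $\mathcal L_\Om$ is coercive on $H^1_0(\Dsf)$ since $\partial_u\rho_\Om(\cdot,u_\Om)=\chi_\Om g_1'(u_\Om)+\chi_{\Dsf\setminus\Om}g_2'(u_\Om)\ge 0$ by \eqref{Hyp:g}. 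By Theorem~\ref{Th:MainSemiLinear} with $E=\{y\}$ (so that $\mu_{\{y\}}=\delta_y$), for $y\in\Dsf\setminus\partial\Om$ the function $U_{\{y\},0}$ is the unique solution, in the sense of Definition~\ref{De:WeakSolutionSemiLinear}, of $\mathcal L_\Om U_{\{y\},0}=\mathrm{sgn}_\Om(y)\,\psi(y)\,\delta_y$; equivalently $\mathrm{sgn}_\Om(y)\,U_{\{y\},0}=\psi(y)\,G_\Om(\cdot,y)$, where $G_\Om(\cdot,y)$ denotes the solution of \eqref{Eq:MainLinearisedSemiLinear} with datum $\mu=\delta_y$ (this product being defined for every $y\in\Dsf$). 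By Proposition~\ref{Pr:LinearisedSemiLinear} there is $C_q$, independent of $y$, with $\|G_\Om(\cdot,y)\|_{W^{1,q}(\Dsf)}\le C_q$ for each $q\in[1,\tfrac d{d-1})$, so that the right-hand side of \eqref{Eq:GKSemiLinear} equals $\int_\Dsf\psi(y)\,G_\Om(x,y)\,h(y)\,dy$.

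Next I would verify that this last integral defines an $L^2$ function; this is the step that genuinely uses $d\le 3$. Joint measurability of $(x,y)\mapsto G_\Om(x,y)$ I would obtain by replacing $\delta_y$ with the mollification $\eta^{-d}\theta((\cdot-y)/\eta)$, the corresponding solutions being jointly continuous and converging, by Proposition~\ref{Pr:LinearisedSemiLinear} and the theory of \cite{Ponce,Boccardo}, to $G_\Om(\cdot,y)$ in $W^{1,q}_0(\Dsf)$ for each $y$. Since $\tfrac{dq}{d-q}\to\tfrac d{d-2}$ as $q\uparrow\tfrac d{d-1}$, and $\tfrac d{d-2}>2$ precisely when $d<4$ (with the convention $\tfrac d{d-2}=\infty$ for $d=2$), for $d\in\{2,3\}$ one may fix $q<\tfrac d{d-1}$ with $W^{1,q}(\Dsf)\hookrightarrow L^2(\Dsf)$; hence $\sup_{y\in\Dsf}\|G_\Om(\cdot,y)\|_{L^2(\Dsf)}<\infty$, and Minkowski's integral inequality shows that $w(x):=\int_\Dsf\psi(y)\,G_\Om(x,y)\,h(y)\,dy$ lies in $L^2(\Dsf)$ with $\|w\|_{L^2(\Dsf)}\le C\|h\|_{L^1(\Dsf)}\le C'\|h\|_{L^2(\Dsf)}$. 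For $d\ge4$ one has $G_\Om(\cdot,y)\notin L^2(\Dsf)$ in general, which is exactly why the statement is confined to $d\in\{2,3\}$.

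Finally I would identify $w$ with $\dot u_{\chi_\Om,h}$ by duality. Fix $\varphi\in C_c^\infty(\Dsf)$ and let $z_\varphi\in H^1_0(\Dsf)$ solve $\mathcal L_\Om z_\varphi=\varphi$; since $\mathcal L_\Om$ is self-adjoint with bounded zeroth-order coefficient and $\Dsf$ is smooth, elliptic regularity yields $z_\varphi\in H^2(\Dsf)$, hence (for $d\le3$) a continuous representative of $z_\varphi$, and $z_\varphi\in W^{1,q'}_0(\Dsf)$ for some $q'>d$, so $z_\varphi$ is an admissible test function in Definition~\ref{De:WeakSolutionSemiLinear} and $z_\varphi(y)$ is meaningful for every $y$. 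Using $z_\varphi$ as test function in \eqref{Eq:Udot} and integrating by parts,
\[
\int_\Dsf\dot u_{\chi_\Om,h}\,\varphi\;dx=\int_\Dsf\dot u_{\chi_\Om,h}\,\mathcal L_\Om z_\varphi\;dx=\int_\Dsf h\,\psi\,z_\varphi\;dx,
\]
while testing the weak formulation of $\mathcal L_\Om G_\Om(\cdot,y)=\delta_y$ against $z_\varphi$ (rigorously after mollifying $\delta_y$ as above and passing to the limit) gives $z_\varphi(y)=\int_\Dsf G_\Om(x,y)\,\varphi(x)\;dx$ for a.e.\ $y$. Substituting and applying Fubini's theorem — legitimate by the uniform bound of the previous paragraph together with $h,\varphi\in L^2(\Dsf)$ — yields
\[
\int_\Dsf\dot u_{\chi_\Om,h}\,\varphi\;dx=\int_\Dsf h(y)\,\psi(y)\Bigl(\int_\Dsf G_\Om(x,y)\,\varphi(x)\;dx\Bigr)dy=\int_\Dsf w(x)\,\varphi(x)\;dx.
\]
As $\varphi\in C_c^\infty(\Dsf)$ was arbitrary, $\dot u_{\chi_\Om,h}=w$ a.e.\ in $\Dsf$, which is \eqref{Eq:GKSemiLinear} in view of $\mathrm{sgn}_\Om(y)U_{\{y\},0}=\psi(y)G_\Om(\cdot,y)$.

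The main obstacle I expect is the last step: one must pass freely between the $H^1_0$-variational formulation of \eqref{Eq:Udot} and the $W^{1,q}_0$-formulation with measure datum of Definition~\ref{De:WeakSolutionSemiLinear}, and in particular justify evaluating the adjoint state $z_\varphi$ at the single point $y$; this, together with the $L^2$-integrability of $G_\Om(\cdot,y)$ needed to apply Fubini, is precisely where the hypothesis $d\le 3$ cannot be dispensed with. The mollification argument providing joint measurability of $G_\Om$ and legitimising the test-function identity against the measure datum is routine, but must be carried out with constants uniform in $y$.
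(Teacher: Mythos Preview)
Your proposal is correct and follows essentially the same route as the paper: identify $\mathrm{sgn}_\Om(y)\,U_{\{y\},0}$ with $F(y)\,G_\Om(\cdot,y)$ for the Green kernel $G_\Om$ of $\mathcal L_\Om$, use the Sobolev embedding $W^{1,q}_0(\Dsf)\hookrightarrow L^2(\Dsf)$ (valid precisely for $d\le 3$) to make the superposition integral well-defined, and then verify that this integral solves \eqref{Eq:Udot}. The paper's proof is terser---it invokes the symmetry $G_\Om(x,y)=G_\Om(y,x)$ directly and then says ``it suffices to differentiate'' to conclude---whereas your duality argument with the adjoint state $z_\varphi$ and Fubini makes that final step explicit; this is a genuine expository improvement but not a different strategy.
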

Theorem~\ref{Th:Main2SemiLinear} justifies the analogy between the topological state derivative and the Green kernel of the linearised operator.  Of course, there is an interplay between the dimension assumption $d\in \{2,3\}$ and the integrability of the perturbation $h$. We also note here that we stated the theorem in the $L^2$ setting, as it is the most currently used for control derivatives.

\subsection{Asymptotic expansion of $u_\Omega$ and relation to the topological state derivative}\label{subsec:asymptotic_rhs}
\paragraph{Asymptotic analysis in the linear case}
In this section, we consider the model \eqref{Eq:MainSemiLinear} with $g_1=g_2=0$ for point perturbations. To be precise we consider $u_\eps\in H^1_0(\Dsf)$, such that
\begin{equation}\label{eq:perturbed_state_rhs}
    \int_\Dsf \nabla u_\eps \cdot  \nabla \varphi \;dx =\int_\Dsf (f_1 \chi_{\Omega_\eps} + f_2\chi_{\Omega_\eps^c})\varphi \;dx \quad\text{ for all } \varphi \in H^1_0(\Dsf). 
\end{equation}
where 
\begin{align}
    \Omega_\eps & := \Omega_\eps(x_0,\omega) :=  \begin{cases}
     \Omega\cup \omega_\eps(x_0) &   \text{ for }x_0\in \Dsf\setminus \overline\Omega, \\
     \Omega\setminus \overline{\omega_\eps(x_0)} &  \text{ for } x_0 \in \Omega,
\end{cases}  
\end{align}
and $\omega_\eps(x_0):= x_0 + \eps \omega$ with $\omega\subset \VR^d$ being a simply connected domain with $0\in \omega$. Furthermore, we define $u_\eps := u_{\Om_\eps(x_0,\omega)}$ for $\eps >0$ and $u_0:= u_\Om$.

The full asymptotic expansion for this equation including full topological expansions of several cost functionals
has been studied in \cite{a_BAGAST_2021b}. Recall the notation $T_\eps(x) = x_0 + \eps x$ for $x_0\in \Dsf\setminus\partial \Omega$. We now present a relation between the limit
\begin{equation}\label{eq:limit_x_0}
    U_0 := \lim_{\eps\searrow 0} \frac{u_\eps-u_0}{|\omega_\eps|},
\end{equation}
and the asymptotic expansion derived in \cite{a_BAGAST_2021b} for $u_\eps$, namely,
\begin{equation}\label{eq:point_wise_d2}
    u_\eps(x) = u_0(x) + \eps^2( K(T_\eps^{-1}(x)) + v(x) + \ln(\eps)b) +o(\eps^2) \quad \text{ for } d=2,
\end{equation}
with $b:= -\mathrm{sgn}_\Om(\{x_0\})(2\pi)^{-1}(f_1-f_2)$ and
\begin{equation}\label{eq:point_wise_d3}
    u_\eps(x) = u_0(x) + \eps^2( K(T_\eps^{-1}(x)) + \eps^{d-2}v(x)) + o(\eps^d) \quad \text{ for } d\ge 3.
\end{equation}
Here $K$ is a  corrector function defined in $\VR^d$, while $v$ is a boundary layer corrector defined in the bounded domain $\Dsf$. 
To be precise, $K$  is given, in term of the fundamental solution $E(\cdot)$ of the Laplace operator $-\Delta$ in $\VR^d$, as
\begin{equation}\label{eq:K_rhs}
    K(x) = \mathrm{sgn}_\Om(\{x_0\})(f_1-f_2)\int_\omega E(x-y)\; dy,
\end{equation}
with the fundamental solution being given by
\begin{equation}\label{eq:fundamental_solution}
    E(x) = \begin{cases}
        c_2 \ln(|x|) & \text{ for } d=2, \\
        c_d\frac{1}{|x|^{d-2}} & \text{ for } d\ge 3,
    \end{cases}
\end{equation}
with $c_2:=-\frac{1}{2\pi}$ and, if $d\geq 3$, $c_d:= ((d(d-2)\alpha(d))^{-1}$, $\alpha(d)$ denoting the volume of the unit ball in $\VR^d$. Thus, the function $K$ satisfies
\[
    -\Delta K = \mathrm{sgn}_\Om(\{x_0\})(f_1-f_2)\chi_\omega \quad \text{ in } \VR^d.
\]
and admits the following asymptotic expansion for $d\ge 2$:
\begin{equation}\label{eq:asymptotic_K_rhs}
    K(x)  =  \mathrm{sgn}_\Om(\{x_0\})(f_1-f_2)\begin{cases}
    c_2 \ln(|x|) + O(|x|^{-1}) & \text{ for } d=2, \\
    c_d |x|^{-(d-2)}  + O(|x|^{-(d-1)})& \text{ for } d\ge 3,
    \end{cases}
\end{equation}
so that when we denote by $R$ the first term of the asymptotics of $K$, that is, $K(x)  = R(x) + O(|x|^{d-1})$, we obtain
\begin{equation}\label{eq:R_rhs}
   R(x) =  \mathrm{sgn}_\Om(\{x_0\})(f_1-f_2)\begin{cases}
    c_2 \ln(|x|)  & \text{ for } d=2, \\
    c_d |x|^{-(d-2)}  & \text{ for } d\ge 3.
    \end{cases}
\end{equation}
The corrector function $v\in H^1(\Dsf)$ satisfies 
        \begin{cases2}{}
            -\Delta v & =0  &&  \quad \text{ in } \Dsf,\\
            v(x) &= -R(x-x_0) && \quad \text{ on } \partial \Dsf. \label{eq:vk}
    \end{cases2}

    To state the main result, let us briefly recall the setting of \cite{a_BAGAST_2021b}. Since the result in \cite{a_BAGAST_2021b} was provided only for $d\in \{2,3\}$, we give  a short proof in the appendix. 

\begin{lemma}\label{lma:expansion_rhs}
    Introduce the function
\begin{equation}
    K_\eps = \frac{(u_\eps-u_0)\circ T_\eps}{\eps^2}, \quad \eps>0.
\end{equation}
Set $\Dsf_\eps := T_\eps^{-1}(\Dsf)$ for $\eps>0$. Then there is a constant $C=C_{p,d}>0$, which depends on $p$ and $d$, such that for $d=2$:
\begin{equation}\label{eq:main_2}
    \|\eps(K_\eps - K - v\circ T_\eps-\ln(\eps)b)\|_{L^2(\Dsf_\eps)} + \|\nabla(K_\eps - K - v\circ T_\eps-\ln(\eps)b)\|_{L^2(\Dsf_\eps)^d} \le C \eps, 
\end{equation}
with $b:=-\mathrm{sgn}_\Om(\{x_0\}) \frac{1}{2\pi}(f_1-f_2)$ and for $d\ge 3$:
\begin{equation}\label{eq:main_3}
    \|\eps(K_\eps - K - \eps^{d-2}v\circ T_\eps)\|_{L^2(\Dsf_\eps)} + \|\nabla(K_\eps - K - \eps^{d-2}v\circ T_\eps)\|_{L^2(\Dsf_\eps)^d} \le C \eps^{\frac{d}{2}}.
\end{equation}
\end{lemma}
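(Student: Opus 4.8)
The plan is to establish the expansion by comparing $u_\eps$, via the rescaling $T_\eps$, with the profile $u_0 + \eps^2(K\circ T_\eps^{-1} + \dots)$, reducing everything to an energy estimate on the rescaled domain $\Dsf_\eps$. First I would write down the equation satisfied by $K_\eps = (u_\eps - u_0)\circ T_\eps/\eps^2$. Since $-\Delta u_\eps = f_1\chi_{\Om_\eps} + f_2\chi_{\Om_\eps^c}$ and $-\Delta u_0 = f_1\chi_\Om + f_2\chi_{\Om^c}$, subtracting and using $\chi_{\Om_\eps} - \chi_\Om = \pm\chi_{\omega_\eps(x_0)}$ near $x_0$ (with the sign $\mathrm{sgn}_\Om(\{x_0\})$), the difference $w_\eps := u_\eps - u_0$ satisfies $-\Delta w_\eps = \mathrm{sgn}_\Om(\{x_0\})(f_1 - f_2)\chi_{\omega_\eps(x_0)}$ in $\Dsf$ with $w_\eps = 0$ on $\partial\Dsf$. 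Pulling back by $T_\eps$ (which scales $-\Delta$ by $\eps^{-2}$ and turns $\chi_{\omega_\eps(x_0)}$ into $\chi_\omega$), one finds that $K_\eps$ solves $-\Delta K_\eps = \mathrm{sgn}_\Om(\{x_0\})(f_1-f_2)\chi_\omega$ in $\Dsf_\eps$, i.e.\ exactly the equation defining $K$ on all of $\VR^d$, but now on the expanding domain $\Dsf_\eps$ with the inherited boundary condition $K_\eps = 0$ on $\partial\Dsf_\eps$.

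Next I would set $r_\eps := K_\eps - K - c_\eps v\circ T_\eps - (\text{log term if }d=2)$, where $c_\eps = 1$ for $d=2$ and $c_\eps = \eps^{d-2}$ for $d\ge 3$, and note that, since $-\Delta K = \mathrm{sgn}_\Om(\{x_0\})(f_1-f_2)\chi_\omega$ in $\VR^d$ and $-\Delta v = 0$ in $\Dsf$ (hence $-\Delta(v\circ T_\eps) = 0$ in $\Dsf_\eps$), the bulk source cancels and $r_\eps$ is harmonic in $\Dsf_\eps$. Consequently the whole estimate is driven by the boundary values of $r_\eps$ on $\partial\Dsf_\eps$: there, $K_\eps = 0$, the log/constant term is explicit, and $v\circ T_\eps$ on $\partial\Dsf_\eps$ equals $v$ on $\partial\Dsf$, which by \eqref{eq:vk} equals $-R(x-x_0)$. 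So on $\partial\Dsf_\eps$ the trace of $r_\eps$ is, after scaling back, exactly $-(K - R)$ evaluated at points of size $\eps^{-1}$ (plus the chosen log/constant correction in $d=2$), and the asymptotics \eqref{eq:asymptotic_K_rhs}--\eqref{eq:R_rhs} give $|K(x) - R(x)| = O(|x|^{-(d-1)})$ for $d\ge 3$ and $O(|x|^{-1})$ for $d=2$. Evaluating at $|x|\sim \eps^{-1}$ yields a boundary trace of size $O(\eps^{d-1})$ (resp.\ $O(\eps)$ in $d=2$, once the $\ln\eps$ and $O(1)$ pieces are absorbed into $v\circ T_\eps + \ln(\eps)b$). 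Then a standard elliptic estimate for the harmonic function $r_\eps$ on $\Dsf_\eps$ — lifting the boundary datum and using the energy inequality, with constants uniform in $\eps$ because $\Dsf_\eps$ contains a fixed-size region and has smooth boundary of controlled geometry — converts this boundary bound into the claimed $L^2$ and $H^1$ bounds: multiplying the boundary size by $|\partial\Dsf_\eps|^{1/2}\sim \eps^{-(d-1)/2}$ accounts for the final rates $C\eps$ in $d=2$ and $C\eps^{d/2}$ in $d\ge 3$.

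The main obstacle will be making the boundary-layer estimate genuinely uniform in $\eps$: the domain $\Dsf_\eps = T_\eps^{-1}(\Dsf)$ blows up, so one must be careful that the constant in the trace/elliptic estimate does not degenerate. The clean way is to undo the scaling for this step — transplant $r_\eps$ back to the fixed domain $\Dsf$, where it is a harmonic function with boundary datum of size $O(\eps^{d+1})$ (resp.\ with the $d=2$ adjustments), apply the fixed-domain elliptic estimate with its $\eps$-independent constant, and only then rescale the resulting $L^2$ and $H^1$ norms back to $\Dsf_\eps$, picking up the explicit powers of $\eps$ from the Jacobian and the chain rule. A secondary technical point is the $d=2$ case, where $K$ and $R$ have a logarithmic growth: one has to check that the precise constants $c_2 = -\tfrac{1}{2\pi}$ and $b = -\mathrm{sgn}_\Om(\{x_0\})\tfrac{1}{2\pi}(f_1-f_2)$ are exactly those that cancel the $\ln\eps$ contribution coming from $R(\eps^{-1}\,\cdot)$ on $\partial\Dsf_\eps$, leaving only the $O(\eps)$ remainder; this is a direct computation using $R(x) = \mathrm{sgn}_\Om(\{x_0\})(f_1-f_2)c_2\ln|x|$ and $\ln(\eps^{-1}|x-x_0|) = \ln|x-x_0| - \ln\eps$. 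The proof of the lemma is then deferred to the appendix as the authors indicate.
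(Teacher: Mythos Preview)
Your proposal is correct and follows essentially the same route as the paper: derive the rescaled equation for $K_\eps$, observe that the corrector difference $V_\eps$ is harmonic on $\Dsf_\eps$ with boundary trace equal to $R-K$ (after the $\ln\eps$ cancellation in $d=2$, respectively the homogeneity $R(\eps x)=\eps^{-(d-2)}R(x)$ in $d\ge 3$), and then convert the boundary decay $|K-R|=O(|x|^{-(d-1)})$ into the interior energy bound. The paper handles the last step by invoking scaled trace/extension lemmas on $\Dsf_\eps$ (its Lemma~\ref{lem:scaling_norms_q}(iv)), which is exactly equivalent to your transplant-to-fixed-domain strategy. One small slip: when you pull $r_\eps$ back to $\Dsf$ the boundary datum has size $O(\eps^{d-1})$, not $O(\eps^{d+1})$; combined with the Jacobian factor $\eps^{d/2-1}$ relating the $H^1(\Dsf)$ and scaled $\Dsf_\eps$ norms this still yields the stated $\eps^{d/2}$.
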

\paragraph{Relation between asymptotic expansion and topological state derivative}
To draw a connection to the topological state derivative, let us note that we can write the expansions \eqref{eq:main_2} and \eqref{eq:main_3} pointwise as \eqref{eq:point_wise_d2} and \eqref{eq:point_wise_d3}. 
Our main result is that the estimate \eqref{eq:main_3} in fact implies the following estimate:
\begin{theorem}\label{thm:main_rates_rhs}
\begin{itemize}
For $x_0\in \Dsf\setminus \partial \Omega$ and $\omega\subset \VR^d$ be a simply connected and bounded domain with $0\in \omega$ we use the definition of $\Omega_\eps(x_0,\omega)$ of \eqref{eq:omega_eps} and set $u_\eps := u_{\Omega_\eps(x_0,\omega)}$, $u_0:= u_\Omega$ and
\begin{equation}
    U_\eps := \frac{u_\eps - u_0}{|\omega_\eps|}, \quad \eps >0. 
\end{equation}
Let $K$ and $v$ be defined by \eqref{eq:K_rhs} and \eqref{eq:vk}, respectively. Then we have the following results.
\item[(i)]
    From the asymptotic expansion \eqref{eq:main_2} and \eqref{eq:main_3} we conclude that 
    the limit \eqref{eq:limit_x_0} exists. In fact we have
 \begin{equation}
     U_0(x) =  \mathrm{sgn}_\Om(\{x_0\})(f_1-f_2) E(x-x_0) + v(x), \quad \text{ for a.e. } x\in \Dsf,
\end{equation}
and for a.e. $x\in \Dsf$, we have with $b:= -\mathrm{sgn}_\Om(\{x_0\})(2\pi)^{-1}(f_1-f_2)$:
\begin{equation}\label{eq:pointwise_limit_rhs_dim2}
    U_0(x) = \lim_{\eps\searrow 0}\frac{1}{|\omega_\eps|} \eps^2(K(T_\eps^{-1}(x)) + v(x) + \ln(\eps)b) \quad \text{ for } d=2,
\end{equation}
and 
\begin{equation}\label{eq:pointwise_limit_rhs_dim3}
    U_0(x) = \lim_{\eps\searrow 0}\frac{1}{|\omega_\eps|}  \eps^2 (K(T_\eps^{-1}(x)) + \eps^{d-2} v(x)) \quad \text{ for } d\ge 3.
\end{equation}
\item[(ii)] Let the space dimension be $d=2$.   Then there exist constant $C=C_{p,d}>0$, which depends on $p$ and $d$, such that
    \begin{itemize}
        \item[$\bullet$] We have for all $p\in (2,\infty)$:
\begin{equation}
    \|U_\eps - U_0\|_{L^p(\Dsf)} \le C\eps^{\frac2p}.
\end{equation}
\item[$\bullet$] We have for all $p\in (1,2)$:
    \begin{equation}\label{eq:estimate_W1p_dim2}
    \|U_\eps - U_0\|_{W^{1,p}(\Dsf)} \le  C\eps^{\frac2p-1}.
\end{equation}
\end{itemize}
\item[(iii)] Let the space dimension be $d\ge 3$. Then there exists $C=C_{p,d}>0$, which also depends on $p$ and $d$, such that:
    \begin{itemize}
        \item[$\bullet$]  We have for all $p\in \left(\frac{d}{d-1},\frac{d}{d-2}\right)$:
    \begin{equation}
        \|U_\eps - U_0\|_{L^p(\Dsf)} \le C\eps^{\frac{d-p(d-2)}{p}}.
    \end{equation}
        \item[$\bullet$] We have for all $p\in (1,\frac{d}{d-1})$,
            \begin{equation}\label{eq:rate_3D_linear}
        \|U_\eps - U_0\|_{W^{1,p}(\Dsf)} \le C\eps^{\frac{d-p(d-1)}{p}}.  
    \end{equation}
    Notice that $p^{-1}(d-p(d-1)) \in (0,1)$.
    \end{itemize}
\end{itemize}
\end{theorem}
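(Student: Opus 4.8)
The plan is to derive everything from Lemma \ref{lma:expansion_rhs} by a change of variables and careful bookkeeping of the rescaling. First I would record the algebraic identity underlying the whole argument: by definition $K_\eps = (u_\eps - u_0)\circ T_\eps / \eps^2$ and $|\omega_\eps| = \eps^d|\omega|$, so that $U_\eps = (u_\eps-u_0)/|\omega_\eps| = \eps^{2-d}|\omega|^{-1}(K_\eps\circ T_\eps^{-1})$. Pulling back to the fixed domain $\Dsf$ via $T_\eps$, every $L^p(\Dsf_\eps)$-norm of a function $w$ transforms as $\|w\|_{L^p(\Dsf_\eps)} = \eps^{-d/p}\|w\circ T_\eps^{-1}\|_{L^p(\Dsf)}$, and gradients pick up an extra factor $\eps$: $\|\nabla w\|_{L^p(\Dsf_\eps)} = \eps^{1-d/p}\|\nabla(w\circ T_\eps^{-1})\|_{L^p(\Dsf)}$. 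Since Lemma \ref{lma:expansion_rhs} is stated in $L^2$, I would first upgrade it to $L^p$ for the relevant range of $p$ using the explicit structure of the correctors: $K - R$ decays like $|x|^{-(d-1)}$ at infinity by \eqref{eq:asymptotic_K_rhs}, $R = |\omega|^{-1}\mathrm{sgn}_\Om(\{x_0\})(f_1-f_2)\cdot|\omega| E$ is a fixed singular profile whose $L^p$ and $W^{1,p}$ norms on balls are finite exactly in the stated ranges, and $v$ is smooth on $\overline\Dsf$. This reduces matters to elliptic estimates on the rescaled remainder, for which I would invoke the measure-data theory (Proposition \ref{Pr:LinearisedSemiLinear} with $g_1=g_2=0$, i.e.\ the results of \cite{Ponce,Littman}) together with interpolation.

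For part (i): from the pointwise expansions \eqref{eq:point_wise_d2}–\eqref{eq:point_wise_d3} one divides by $|\omega_\eps| = \eps^d|\omega|$. For $d \ge 3$ the leading term $\eps^2 K(T_\eps^{-1}(x))/(\eps^d|\omega|)$ must be handled using the asymptotics $K = R + O(|x|^{-(d-1)})$: writing $R(T_\eps^{-1}(x)) = \eps^{d-2}|\omega|^{-1}\mathrm{sgn}_\Om(\{x_0\})(f_1-f_2)|\omega| E(x-x_0)\cdot\eps^{-(d-2)}$ — more precisely using homogeneity $E(\eps^{-1}z) = \eps^{d-2}E(z)$ for $d\ge 3$ and $E(\eps^{-1}z)=E(z)+c_2\ln(\eps^{-1})$ for $d=2$ — one finds $\eps^2 R(T_\eps^{-1}(x))/(\eps^d|\omega|) = \mathrm{sgn}_\Om(\{x_0\})(f_1-f_2)E(x-x_0)$ exactly, the $O(|x|^{-(d-1)})$ part contributes $O(\eps^{2-d}\cdot\eps^{d-1}) = O(\eps)\to 0$, and the $\eps^{2}\eps^{d-2}v/(\eps^d|\omega|) = v/|\omega|$ term survives. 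For $d=2$ the logarithmic term $\eps^2\ln(\eps)b/(\eps^2|\omega|)$ is precisely what cancels the $\ln(\eps^{-1})$ generated by $E(\eps^{-1}(x-x_0))$, since $b = -\mathrm{sgn}_\Om(\{x_0\})(2\pi)^{-1}(f_1-f_2)$ and $c_2 = -(2\pi)^{-1}$; this cancellation is the crux of the $d=2$ statement and must be checked coefficient by coefficient. That $U_0$ solves \eqref{eq:limit_U0b}-type equation then follows from $-\Delta(\mathrm{sgn}_\Om(\{x_0\})(f_1-f_2)E(\cdot-x_0)) = \mathrm{sgn}_\Om(\{x_0\})(f_1-f_2)\delta_{x_0}$ and $-\Delta v = 0$ with the matching boundary condition \eqref{eq:vk}.

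For parts (ii) and (iii): one subtracts the closed form of $U_0$ from $U_\eps$. Writing $U_\eps - U_0$ in terms of $K_\eps$ and using $U_\eps = \eps^{2-d}|\omega|^{-1}(K_\eps\circ T_\eps^{-1})$, the difference becomes $\eps^{2-d}|\omega|^{-1}$ times the pullback of $(K_\eps - K - \eps^{d-2}v\circ T_\eps)$ (for $d\ge 3$; analogously with the $\ln\eps$ term for $d=2$), up to terms coming from replacing $K$ by its profile $R$ on the far field, which are controlled by the $O(|x|^{-(d-1)})$ decay. Applying the scaling dictionary to the $L^2$-bounds \eqref{eq:main_2}, \eqref{eq:main_3}: the $W^{1,p}$ estimate comes from $\eps^{2-d}|\omega|^{-1}\cdot\eps^{1-d/p}\cdot(\text{RHS of the lemma in }L^2)$ after a Hölder/interpolation step to pass from $L^2(\Dsf_\eps)$ to $L^p(\Dsf_\eps)$, while the $L^p$ estimate (no gradient) drops the factor $\eps$. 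Matching powers of $\eps$ reproduces the exponents $\tfrac2p$, $\tfrac2p-1$ for $d=2$ and $\tfrac{d-p(d-2)}{p}$, $\tfrac{d-p(d-1)}{p}$ for $d\ge 3$. I expect the main obstacle to be the interpolation step: Lemma \ref{lma:expansion_rhs} gives only $L^2(\Dsf_\eps)$-control on a domain whose diameter blows up like $\eps^{-1}$, so passing to $L^p$ requires combining that global $L^2$-bound with the explicit pointwise decay of $K-R$ (and $W^{1,p}$-integrability of the singular profile $R$) in a way that keeps track of the $\eps$-dependent volume factors — a naive Hölder inequality on $\Dsf_\eps$ would lose the sharp rate. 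The constraint $p^{-1}(d-p(d-1))\in(0,1)$ is exactly the requirement $p\in(1,\tfrac{d}{d-1})$ that makes the singular profile belong to $W^{1,p}$ locally, so the admissible ranges in (ii)–(iii) are forced by this.
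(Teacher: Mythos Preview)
Your proposal is essentially on the same track as the paper's proof: the decomposition
\[
U_\eps - U_0 \;=\; \bigl[U_\eps - |\omega|^{-1}(\eps^{2-d}K\circ T_\eps^{-1}+v)\bigr] \;+\; |\omega|^{-1}\bigl[\eps^{2-d}K\circ T_\eps^{-1} - R(\cdot-x_0)\bigr]
\]
and the treatment of the second bracket via the decay of $K-R$ (this is Lemma~\ref{lem:decay_KR_integrability} in the paper) are exactly right. Your part~(i) argument, including the logarithmic cancellation in $d=2$, matches the paper.

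Two clarifications are worth making. First, the difficulty you flag is misplaced: the paper changes variables \emph{first}, turning the $L^2(\Dsf_\eps)$ bound of Lemma~\ref{lma:expansion_rhs} into an $H^1(\Dsf)$ bound on the fixed domain, namely $\|U_\eps - |\omega|^{-1}(\eps^{2-d}K\circ T_\eps^{-1}+v)\|_{H^1(\Dsf)}\le C\eps$. H\"older on the \emph{fixed} domain then controls the first bracket in $W^{1,p}$ by $C\eps$, which is always faster than the claimed rate; the rate-determining term is the second bracket, handled purely by the scaling and the global $L^p$/$W^{1,p}$ integrability of $K-R$ on $\VR^d$. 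So there is no loss from H\"older here. Second, for the $L^p$ estimates in~(ii) and~(iii) the range of $p$ can exceed $2$ (e.g.\ $p\in(2,\infty)$ for $d=2$, $p\in(3/2,3)$ for $d=3$), so you cannot use H\"older to pass from $L^2$ to $L^p$; the paper instead uses the Sobolev embedding $H^1(\Dsf)\hookrightarrow L^p(\Dsf)$ for $p<\frac{d}{d-2}$ on the first bracket. You should also note that the full $W^{1,p}$ bound (not just the gradient) is obtained via Poincar\'e, using that $U_\eps - |\omega|^{-1}(R(\cdot-x_0)+v)$ vanishes on $\partial\Dsf$ thanks to the boundary condition \eqref{eq:vk}.
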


The following corollary shows that, if $\omega=B_1(0)$ is the unit ball centered at the origin, then the convergence rate of $U_\eps$ to $U_0$ in the $L^p(\Dsf)$ norm can be improved to an order between $(1,2)$. 

\begin{corollary}\label{cor:symmetric_inclusion}
Assume that $\omega=B_1(0)$ is the unit ball in $\VR^d$ centered at the origin. Then we have for all $p\in (1,\frac{d}{d-2})$ for $d\ge3$ or $p\in (1,\infty)$ for $d=2$,
\begin{equation}\label{eq:lp_rhs_convergence}
        \|U_\eps - U_0\|_{L^p(\Dsf)} \le C\eps^{\frac{d-p(d-2)}{p}}.  
    \end{equation}
    That means the convergence rates \eqref{eq:estimate_W1p_dim2} for $d=2$ and \eqref{eq:rate_3D_linear} for $d\ge3$ are improved for all $p\in (1,\frac{d}{d-1})$.
\end{corollary}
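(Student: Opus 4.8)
The plan is to exploit the additional cancellation that arises when $\omega = B_1(0)$ is a ball, which is the classical "Newton's theorem" phenomenon: outside a centered ball, the Newtonian potential generated by the uniform density $\chi_{B_1(0)}$ coincides exactly with the point-mass potential $|\omega| E(\cdot)$. Concretely, from \eqref{eq:K_rhs} and the mean-value property of harmonic functions (or a direct computation with the fundamental solution \eqref{eq:fundamental_solution}), one has for $|x|\ge 1$
\[
\int_{B_1(0)} E(x-y)\,dy = |B_1(0)|\, E(x),
\]
so that $K(x) = \mathrm{sgn}_\Om(\{x_0\})(f_1-f_2)\,|\omega|\,E(x)$ \emph{exactly} for $|x|\ge 1$, not merely up to the $O(|x|^{-(d-1)})$ error recorded in \eqref{eq:asymptotic_K_rhs}. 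Equivalently, the remainder $K - R$ is compactly supported in $\overline{B_1(0)}$ (and bounded there), rather than decaying polynomially at infinity.

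First I would revisit the proof of Theorem \ref{thm:main_rates_rhs}(ii)--(iii): there, the loss in the exponent compared to the $L^p$ rates comes precisely from estimating the tail of $K - R$ on the annulus $1 \le |x| \le \mathrm{dist}(x_0,\partial\Dsf)/\eps$ after rescaling, where $\|K - R\|_{L^p}$ over that large annulus contributes an $\eps$-dependent factor. With $\omega = B_1(0)$, this tail term simply vanishes: $K_\eps(x) = K(T_\eps^{-1}(x))$ already equals $R(T_\eps^{-1}(x)) = \eps^{-(d-2)}\,\mathrm{sgn}_\Om(\{x_0\})(f_1-f_2)\,c_d\,|x-x_0|^{-(d-2)}$ (for $d\ge 3$; with the obvious logarithmic modification for $d=2$) for all $x$ with $|T_\eps^{-1}(x)|\ge 1$, i.e. for all $x$ outside the physical inclusion $\omega_\eps(x_0)$. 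Hence the difference $U_\eps - U_0$, after writing $U_0 = \mathrm{sgn}_\Om(\{x_0\})(f_1-f_2)E(\cdot-x_0) + v$, reduces outside $\omega_\eps(x_0)$ to the smooth boundary-corrector discrepancy $\frac{1}{|\omega_\eps|}\eps^2(\text{terms in }v) - v$ plus (for $d=2$) the $\ln\eps$ term, all of which are already controlled with a full power of $\eps^{d/2}$-type smallness in $L^2$ by Lemma \ref{lma:expansion_rhs}; while on $\omega_\eps(x_0)$ itself, which has measure $O(\eps^d)$, one estimates $U_\eps$ and $U_0$ separately using their explicit near-field forms, and the $L^p$ norm over a set of measure $\eps^d$ contributes the favorable factor $\eps^{d/p}$.

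Then I would assemble the $L^p$ bound by splitting $\Dsf = \omega_\eps(x_0) \cup (\Dsf\setminus\omega_\eps(x_0))$. On $\Dsf\setminus\omega_\eps(x_0)$ the integrand is, by the exact cancellation above, $O(\eps^2)\cdot(\text{bounded corrector terms})$, and for $d=2$ one additionally checks that $\tfrac{1}{|\omega_\eps|}\eps^2\ln(\eps)b$ combines with $v$ to leave a remainder of size $\eps^{2/p}\cdot|\ln\eps|$ absorbed into $C\eps^{(d-p(d-2))/p - \delta}$ — or, more carefully, one keeps track of the logarithm and shows it does not spoil the stated clean power by using $p < \infty$. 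On $\omega_\eps(x_0)$, $E(x-x_0)$ and $K(T_\eps^{-1}(x))/|\omega_\eps|$ are each $O(\eps^{-(d-2)})$ pointwise but the domain has measure $O(\eps^d)$, giving an $L^p$ contribution $O(\eps^{d/p - (d-2)}) = O(\eps^{(d-p(d-2))/p})$, which matches the claimed rate and, since $(d-p(d-2))/p \in (1,2)$ exactly when $p \in (\tfrac{d}{d-2}\cdot\tfrac{1}{\,\cdot\,},\dots)$ — precisely on the stated range $p\in(1,\tfrac{d}{d-2})$ — this dominates. Comparing with \eqref{eq:estimate_W1p_dim2} and \eqref{eq:rate_3D_linear}, whose exponents $\tfrac{2}{p}-1$ and $\tfrac{d-p(d-1)}{p}$ are strictly smaller, yields the claimed improvement for $p\in(1,\tfrac{d}{d-1})$.

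The main obstacle I anticipate is bookkeeping near the inclusion rather than anything conceptually deep: one must be careful that the near-field contributions of $U_\eps$ on $\omega_\eps(x_0)$ are genuinely no worse than $O(\eps^{-(d-2)})$ pointwise — this requires a uniform (in $\eps$) bound on $K_\eps$ on the \emph{fixed-size} rescaled inclusion $\omega = B_1(0)$, which follows from Lemma \ref{lma:expansion_rhs} together with the explicit form of $K$ on $B_1(0)$, but needs to be stated cleanly. The $d=2$ case also demands slightly delicate handling of the $\ln(\eps)$ term to confirm it does not degrade the rate on the full range $p\in(1,\infty)$; I would handle this by noting $\eps^{2/p}|\ln\eps| = o(\eps^{2/p-\eta})$ for any $\eta>0$ and that the stated exponent $(d-p(d-2))/p = 2/p$ for $d=2$ is attained from the inclusion term $\omega_\eps(x_0)$ alone, so the logarithmic boundary term is strictly lower order.
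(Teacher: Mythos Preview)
Your identification of Newton's theorem (that $K=R$ exactly on $\VR^d\setminus\overline{B_1(0)}$) is precisely the key cancellation, and your near-field $L^p$ computation on $\omega_\eps(x_0)$ is correct in outcome (though note $E(\cdot-x_0)$ is \emph{not} $O(\eps^{-(d-2)})$ pointwise there --- it is singular at $x_0$; what you really use is $\|R(\cdot-x_0)\|_{L^p(B_\eps(x_0))}=C\eps^{(d-p(d-2))/p}$ for $p<d/(d-2)$). However, there is a genuine gap on the exterior $\Dsf\setminus\omega_\eps(x_0)$: you plan to control the residual ``boundary-corrector discrepancy'' there via Lemma~\ref{lma:expansion_rhs}, but after rescaling back to $\Dsf$ that lemma only yields $\|U_\eps-|\omega|^{-1}(\eps^{2-d}K\circ T_\eps^{-1}+v)\|_{H^1(\Dsf)}\le C\eps$ (the $\eps^{d/2}$ you quote lives in $L^2(\Dsf_\eps)$, not $L^2(\Dsf)$). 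For $p\in(1,\tfrac{d}{d-1})$ the target exponent $(d-p(d-2))/p$ lies in $(1,2)$, so an $O(\eps)$ exterior error would \emph{dominate} and you would only recover $\|U_\eps-U_0\|_{L^p}\le C\eps$, which is strictly weaker than \eqref{eq:lp_rhs_convergence} on exactly the range where the corollary claims an improvement.

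The missing ingredient --- and the paper's actual argument --- is that for $\omega=B_1(0)$ the first-order expansion is \emph{exact}: $U_\eps=|\omega|^{-1}(\eps^{2-d}K\circ T_\eps^{-1}+v)$ identically on $\Dsf$ (with the obvious logarithmic variant for $d=2$). This follows immediately from the very Newton identity you already have: in the proof of Lemma~\ref{lma:expansion_rhs} the error $V_\eps:=K_\eps-K-\eps^{d-2}v\circ T_\eps$ is harmonic in $\Dsf_\eps$ with boundary data $R-K$ on $\partial\Dsf_\eps$, and since $K=R$ outside $B_1(0)$ this boundary data vanishes, whence $V_\eps\equiv0$. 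Once you observe this, the exterior contribution is literally zero, the whole difference $U_\eps-U_0=|\omega|^{-1}(\eps^{2-d}K\circ T_\eps^{-1}-R(\cdot-x_0))$ is supported in $B_\eps(x_0)$, and a change of variables gives $\eps^{(d-p(d-2))/p}\|K-R\|_{L^p(B_1(0))}$ directly. This also dissolves your worry about the $\ln(\eps)$ term in $d=2$: it cancels exactly against the scaling of $R$, with no logarithmic loss.
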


Let us finish this section with two remarks.
\begin{remark}
    The function $U_0(x):= |\omega|^{-1}(R(x-x_0) + v(x))\in W^{1,p}_0(\Dsf)$ with $p\in [1,\frac{d}{d-1})$ solves in a weak sense:
    \begin{cases2}{eq:limit_dirac}
        -\Delta U_0  & = \mathrm{sgn}_\Om(\{x_0\})(f_1-f_2) \delta_{x_0}  && \quad  \text{ in } \Dsf,  \\
        U_0  & =0  && \quad \text{ on } \partial \Dsf.
    \end{cases2}
    The decomposition of the solution $U_0$ of \eqref{eq:limit_dirac} in a regular part $|\omega|^{-1}v$ and a singular part $|\omega|^{-1}R(x-x_0) = \mathrm{sgn}_\Om(\{x_0\})(f_1-f_2)|\omega|  E(x-x_0)$ is well-known and often used in the numerical investigation of this type of equation \cite{a_ER_1985a,a_BEDELAMA_2018a}.
\end{remark}

\begin{remark}
The improved $L^p(\Dsf)$ convergence rate of Corollary~\ref{cor:symmetric_inclusion} is a result of the symmetry of the 
inclusion $\omega=B_1(0)$. 
    We note that \eqref{eq:lp_rhs_convergence} implies for $p>1$ close to one that 
\begin{equation}
    \|(U_\eps - U_0)/\eps\|_{L^p(\Dsf)} = o(1)
\end{equation}
and thus $U_\eps^2 := (U_\eps - U_0)/\eps \to 0$ strongly in $L^p(\Dsf)$ for $p$ close to one. This is actually consistent with the limit equation of $U_\eps^2$. To see this we recall that 
\[
    U_\eps=\frac{u_\eps-u_0}{|\omega_\eps|}, \quad U_\eps^2 := \frac{U_\eps-U_0}{\eps}, \quad \eps >0.
\]
It is readily checked that $U_\eps^2$ satisfies:
\begin{equation}\label{eq:convergence_ball_lp2}
    \int_\Dsf \nabla U_\eps^2\cdot  \nabla \varphi \;dx = \text{sgn}_\Omega(\{x_0\}) (f_1-f_2)\frac{1}{|\omega_\eps|} \int_{\omega_\eps} \eps^{-1}(\varphi - \varphi(x_0)) \;dx \quad \text{ for all } \varphi \in H^1_0(\Dsf). 
\end{equation}
Now changing variables on the right hand side and integrating by parts on the left hand side, we obtain for $\varphi \in C^2_c(\Dsf)$:
\begin{equation}
    -\int_\Dsf U_\eps^2 \Delta \varphi \;dx =  \text{sgn}_\Omega(\{x_0\}) (f_1-f_2) \int_{\omega} \eps^{-1}(\varphi(x_0+\eps x) - \varphi(x_0)) \;dx.
\end{equation}
Hence, if $U_\eps^2\to U_0^2$ in $L_p(\Dsf$), then passing to the limit yields for $\varphi \in C^2_c(\Dsf)$:
\begin{equation}
    -\int_\Dsf U^2_0 \Delta \varphi \;dx = \text{sgn}_\Omega(\{x_0\}) (f_1-f_2) \int_\omega \nabla \varphi(x_0)\cdot x\;dx.
\end{equation}
So we observe that for $\omega=B_1(0)$ the integral on the right hand side vanishes due to the symmetry of $B_1(0)$. This is consistent with $U_0^2=0$ so that also the left hand side is zero. 
\end{remark}

\paragraph{Topological state derivative via the formal asymptotic expansion of the semilinear equation}
 We consider the semilinear equation \eqref{Eq:MainSemiLinear} with right hand side $f\in L^2(\Dsf)$:
\begin{cases2}{Eq:MainSemiLinear2}
-\Delta u_\Om+\rho_\Om(x,u_\Om)&=f &&\quad\text{ in }\Dsf, 
\\ u_\Om&=0&&\quad\text{ on }\partial\Dsf.
\end{cases2} 
From Theorem~\ref{Th:MainSemiLinear} we have that the limit
\begin{equation}\label{eq:limit_semilinear}
    U_0 := \lim_{\eps\searrow 0 }\frac{u_\eps - u_0}{|\omega_\eps|},
\end{equation}
satisfies
\begin{cases2}{eq:limit_U0_semilinear_asympt1}
    -\Delta U_{0}+\partial_u\rho_\Om(x,u_\Om)U_{0}&=\mathrm{sgn}_\Om(\{x_0\})\delta_{x_0}\left\{(g_2(u_\Om)-g_1(u_\Om))\right\} &&\quad\text{ in }\Dsf,
\\ U_{0}&=0&&\quad\text{ on }\partial \Dsf.
\end{cases2}
We now want to show that this limit can also be obtained using an asymptotic analysis of $u_\Om$. For this purpose, we can split the solution $U_0$ into an irregular part $|\omega|^{-1}R$ and regular part $|\omega|^{-1}v$ as follows (the factor 
$|\omega|^{-1}$ is chosen to make the link between the asymptotic expansion and will become clear shortly). Set 
$g_{x_0} := g_2(u_\Om(x_0))-g_1(u_\Om(x_0))$ and let $R$ be defined by 
\begin{equation}
    R(x) =  \mathrm{sgn}_\Om(\{x_0\}) g_{x_0}|\omega|\begin{cases}
     c_2 \ln(|x|)  & \text{ for } d=2, \\
     c_3 |x|^{-1}  & \text{ for } d= 3.
    \end{cases}
\end{equation}
Then we have in a distributional sense:
\begin{equation}\label{eq:R}
    -\Delta (R(x-x_0)) = |\omega|  \mathrm{sgn}_\Om(\{x_0\}) g_{x_0} \delta_{x_0} \quad \text{ in } \VR^d.
\end{equation}
Now we define $v\in H^1(\Dsf)$ as follows
\begin{cases2}{eq:vk_semilinear1}
    -\Delta v+ \partial_u\rho_\Om(x,u_\Om) v &= -\mathrm{sgn}_\Om(\{x_0\})R(x-x_0)\partial_u\rho_\Om(x,u_\Om)   &&\quad\text{ in }\Dsf,\\ 
    v&=-R(x-x_0) &&\quad\text{ on }\partial \Dsf.
\end{cases2}
Notice that in comparison to the linear setting studied in the previous section we now have an additional term on the right hand side, namely, $R(x-x_0)\partial_u\rho_\Om(x,u_\Om)$, which accounts for the fact that the equation \eqref{eq:R} does not have a lower order term. Then it is readily checked that
\begin{equation}
    U_0(x) := |\omega|^{-1}(R(x-x_0) + v(x)), \quad \text{ a.e. }  x\in \Dsf,
\end{equation}
solves the equation \eqref{eq:limit_U0_semilinear_asympt1}.

We now show that $U_0$ can be obtained from the asymptotics of $u_\Om$. We let $\omega$, $x_0$ and $\Omega_\eps = \Omega_\eps(x_0,\omega)$ be as in the previous section. Denote again the solution of \eqref{Eq:MainSemiLinear2} $ u_{\Om_\eps}$ by $u_\eps$ and $u_0:= u_\Om$. Following the formal asymptotic expansion of \cite{a_IGNAROSOSZ_2009a} we have the following expansion of $u_{\Om_\eps}$: 
    \begin{equation}\label{eq:semi_point_wise_d2}
    u_\eps(x) = u_0(x) + \eps^2( K(T_\eps^{-1}(x)) + v(x) + \ln(\eps)b) +o(\eps^2) \quad \text{ for } d=2,
\end{equation}
with $b:= - \mathrm{sgn}_\Om(\{x_0\}) g_{x_0} |\omega|(2\pi)^{-1}$ and  for $d=3$
\begin{equation}\label{eq:semi_point_wise_d3}
    u_\eps(x) = u_0(x) + \eps^2( K(T_\eps^{-1}(x)) + \eps v(x)) + o(\eps^3) \quad \text{ for } d=3.
\end{equation}
Here $K$, given by
\begin{equation}\label{eq:K_semilinear_explicit}
    K(x) = \mathrm{sgn}_\Om(\{x_0\}) g_{x_0} \int_\omega E(x-y)\;dy,
\end{equation}
solves for $d\in \{2,3\}$ the equation
\begin{equation}\label{eq:K_semilinear}
    -\Delta K = \mathrm{sgn}_\Om(\{x_0\}) g_{x_0} \chi_{\omega} \quad \text{ in } \VR^d.
\end{equation}

\begin{theorem}\label{thm:main_rates_semilinear}
For $x_0\in \Dsf\setminus \partial \Omega$ and $\omega\subset \VR^d$ with $0\in \omega$ we use the definition of $\Omega_\eps(x_0,\omega)$ of \eqref{eq:omega_eps} and set $u_\eps := u_{\Omega_\eps(x_0,\omega)}$ and $u_0:= u_\Omega$ and
\begin{equation}
    U_\eps := \frac{u_\eps - u_0}{|\omega_\eps|}, \quad \eps >0. 
\end{equation}
Let $K$ and $v$ be defined by \eqref{eq:K_semilinear_explicit} and \eqref{eq:vk_semilinear1}, respectively. Then we have the following results.
    From the asymptotic expansion \eqref{eq:semi_point_wise_d2} and \eqref{eq:semi_point_wise_d3}, we conclude that 
    the limit \eqref{eq:limit_semilinear} exists. In fact we have
 \begin{equation}
     U_0(x) =  |\omega|^{-1}(R(x-x_0) + v(x)), \quad \text{ for a.e. } x\in \Dsf,
\end{equation}
and for a.e. $x\in \Dsf$, we have with $b:= - \mathrm{sgn}_\Om(\{x_0\})|\omega|g_{x_0}(2\pi)^{-1}$:
\begin{equation}\label{eq:pointwise_limit_semilinear_dim2}
    U_0(x) = \lim_{\eps\searrow 0}\frac{1}{|\omega_\eps|} \eps^2(K(T_\eps^{-1}(x)) + v(x) + \ln(\eps)b) \quad \text{ for } d=2,
\end{equation}
and 
\begin{equation}\label{eq:pointwise_limit_semilinear_dim3}
    U_0(x) = \lim_{\eps\searrow 0}\frac{1}{|\omega_\eps|}  \eps^2 (K(T_\eps^{-1}(x)) + \eps v(x)) \quad \text{ for } d= 3.
\end{equation}
\end{theorem}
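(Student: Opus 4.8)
The plan is to mirror the proof of Theorem~\ref{thm:main_rates_rhs}, inserting at each step the modifications forced by the lower-order term $\rho_\Om$. The first ingredient is the rigorous counterpart of the formal expansions \eqref{eq:semi_point_wise_d2}--\eqref{eq:semi_point_wise_d3}, in the spirit of Lemma~\ref{lma:expansion_rhs}: writing $K_\eps := \eps^{-2}(u_\eps - u_0)\circ T_\eps$ on the dilated domain $\Dsf_\eps := T_\eps^{-1}(\Dsf)$, one establishes $\|K_\eps - K - v\circ T_\eps - \ln(\eps)b\|_{H^1(\Dsf_\eps)}\to 0$ when $d=2$ and $\|K_\eps - K - \eps\, v\circ T_\eps\|_{H^1(\Dsf_\eps)}\to 0$ when $d=3$, with $K$ given by \eqref{eq:K_semilinear_explicit}, $b$ as in the statement and $v$ the solution of \eqref{eq:vk_semilinear1}. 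This is obtained by inserting the compound ansatz $u_0 + \eps^2\big(K(T_\eps^{-1}(\cdot)) + (\text{outer corrector})\big)$ into \eqref{Eq:MainSemiLinear2} and linearising $\rho_\Om$ about $u_0$: since $-\Delta K$ carries no zeroth-order contribution, this linearisation is precisely what produces the extra source $-R(\cdot-x_0)\partial_u\rho_\Om(\cdot,u_\Om)$ in the equation \eqref{eq:vk_semilinear1} for $v$. The residual is then controlled by an energy estimate exploiting the monotonicity of the $g_i$ and the boundedness of $g_i'$; for $d\in\{2,3\}$ this may also simply be quoted from \cite{a_IGNAROSOSZ_2009a}, with a self-contained argument placed in an appendix exactly as for Lemma~\ref{lma:expansion_rhs}.

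Granting this expansion, the next step is the bookkeeping that converts $K_\eps$ into $U_\eps$. Since $|\omega_\eps| = \eps^d|\omega|$ and $u_\eps(x)-u_0(x) = \eps^2 K_\eps(T_\eps^{-1}(x))$, one has, for $d=2$,
\[
U_\eps(x) = \frac{1}{|\omega|}\big(K(T_\eps^{-1}(x)) + v(x) + \ln(\eps)b\big) + o(1),
\]
and, for $d=3$, $U_\eps(x) = |\omega|^{-1}\big(\eps^{-1}K(T_\eps^{-1}(x)) + v(x)\big) + o(1)$. I would then invoke the asymptotic behaviour of $K$ at infinity, namely the analogue of \eqref{eq:asymptotic_K_rhs} with $(f_1-f_2)$ replaced by $g_{x_0}$, which gives $K(z) = R(z) + O(|z|^{-(d-1)})$, together with the scaling relation $R(T_\eps^{-1}(x)) = R((x-x_0)/\eps)$. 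For $d=3$, $R$ is $(-1)$-homogeneous, so $R((x-x_0)/\eps) = \eps R(x-x_0)$, whence $\eps^{-1}K(T_\eps^{-1}(x))\to R(x-x_0)$ pointwise on $\Dsf\setminus\{x_0\}$ and \eqref{eq:pointwise_limit_semilinear_dim3} follows. For $d=2$, $R((x-x_0)/\eps) = R(x-x_0) - \mathrm{sgn}_\Om(\{x_0\})g_{x_0}|\omega|c_2\ln(\eps)$, and the coefficient $b = -\mathrm{sgn}_\Om(\{x_0\})g_{x_0}|\omega|(2\pi)^{-1} = \mathrm{sgn}_\Om(\{x_0\})g_{x_0}|\omega|c_2$ is chosen exactly so that the $\ln(\eps)$ terms cancel, giving $K(T_\eps^{-1}(x)) + \ln(\eps)b\to R(x-x_0)$ and \eqref{eq:pointwise_limit_semilinear_dim2}. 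In either case this shows that the limit \eqref{eq:limit_semilinear} exists a.e.\ (and, undoing the rescaling via $\|w\circ T_\eps^{-1}\|_{L^p(\Dsf)} = \eps^{d/p}\|w\|_{L^p(\Dsf_\eps)}$, in $L^p(\Dsf)$ for the same exponents as in Theorem~\ref{thm:main_rates_rhs}), and that it equals $|\omega|^{-1}(R(\cdot-x_0)+v)$.

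It remains to verify the internal consistency of this description. A direct computation using the distributional identity \eqref{eq:R} and the defining problem \eqref{eq:vk_semilinear1} for $v$ shows that $U_0 := |\omega|^{-1}(R(\cdot-x_0)+v)$ solves \eqref{eq:limit_U0_semilinear_asympt1}: adding $-\Delta R(\cdot-x_0)$ and the equation for $v$, the two occurrences of $R(\cdot-x_0)\partial_u\rho_\Om(\cdot,u_\Om)$ cancel and the Dirac mass survives with the right weight, while the boundary condition $v = -R(\cdot-x_0)$ on $\partial\Dsf$ forces $U_0\in W^{1,q}_0(\Dsf)$ for every $q\in[1,\tfrac{d}{d-1})$. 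By the well-posedness in Proposition~\ref{Pr:LinearisedSemiLinear}, $U_0$ is the unique such solution, so it coincides with the limit supplied by Theorem~\ref{Th:MainSemiLinear}; this closes the loop.

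The main obstacle is the first step: making the compound asymptotic expansion of the semilinear state rigorous. One must simultaneously cope with the unbounded rescaled domain $\Dsf_\eps$, the slow decay ($\ln|x|$ in dimension two, $|x|^{-1}$ in dimension three) of the inner corrector $K$, and the nonlinearity — in particular the cross terms produced when $\rho_\Om$ is linearised, which are the reason the corrector $v$ in \eqref{eq:vk_semilinear1} carries an inhomogeneous right-hand side absent in the linear case \eqref{eq:vk}. Once that estimate is in hand, everything downstream is the same explicit bookkeeping with fundamental solutions of $-\Delta$ already carried out for Theorem~\ref{thm:main_rates_rhs}.
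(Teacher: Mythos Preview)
Your proposal is correct and follows essentially the same route as the paper, which simply says that the proof ``follows the lines of the proof of item (i) of Theorem~\ref{thm:main_rates_rhs}'': use the asymptotics $K(x)=R(x)+O(|x|^{-(d-1)})$ together with the scaling of $R$ to pass to the pointwise limit in the right-hand sides of \eqref{eq:pointwise_limit_semilinear_dim2}--\eqref{eq:pointwise_limit_semilinear_dim3}. You actually go further than the paper does here, since the paper takes the expansions \eqref{eq:semi_point_wise_d2}--\eqref{eq:semi_point_wise_d3} as given (quoting the formal expansion of \cite{a_IGNAROSOSZ_2009a}) and does not rigorise them or add the consistency check with Proposition~\ref{Pr:LinearisedSemiLinear}.
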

The proof of this theorem follows the lines of the proof of item (i) of Theorem~\ref{thm:main_rates_rhs}.

\section{Main results for operator point perturbation in linear transmission problems}\label{Se:Op}
In this section we show how our type of analysis carries on to transmission problems, which is also referred to as "perturbation of the operator". The analysis of this type of perturbation is more difficult than perturbations of lower order terms and typically involves so-called ``polarisation matrices''; \cite{a_AM_2006a,b_NOSO_2013a,b_AMKA_2007a}. We will see that the topological state derivative for point perturbations of the operator exists, but exhibits a very low regularity.

\subsection{Topological state derivative for the transmission problem}
\paragraph{Analytic set-up}
Throughout this section we fix a set $\Om\in \Ca(\Dsf)$ and assume $\Omega\Subset \Dsf$ is smooth. For two fixed parameters $\beta_1\,, \beta_2>0$ and every $\Om\in \Ca(\Dsf)$, we define 
\[ 
\beta_\Om:=\beta_1\chi_\Om+\beta_2\chi_{\Dsf\backslash\overline\Om}.
\]
Now for $f\in L^p(\Dsf)$ with $p>d$, let $u_\Om$ be the unique weak solution in $W^{1,p}(\Dsf)$ of the equation 
\begin{cases2}{Eq:MainTransmission}
-\Div(\beta_\Om \nabla u_\Om)&=f &&\quad\text{ in }\Dsf, 
\\ u_\Om&=0&&\quad\text{ on }\partial \Dsf.
\end{cases2}
We note that the strong form of this equation reads: denoting by $u^+:=u_{|\Omega}$ and $u^-:= u_{|\Dsf\setminus\overline\Omega}$, we have
\begin{cases2}{strongtrans}
    -\beta_1 \Delta u^+ & =  f && \text{ in } \Omega, \\
    -\beta_2 \Delta u^- & =  f && \text{ in } \Dsf\setminus\overline\Omega, \\
    \beta_1\partial_\nu u^+& = \beta_2\partial_\nu u^- && \text{ on } \partial \Omega, \\
    u^+ & = u^- && \text{ on } \partial \Omega, \\
    u^- & = 0 && \text{ on } \partial \Dsf.
\end{cases2}

\paragraph{Topological perturbation under consideration}
In this section, we consider only point perturbations $\Omega_\eps(x_0,\omega)$ at points $x_0\in \Dsf\setminus\partial \Omega$ of the set $\Omega$ defined by
\begin{align}\label{eq:omega_epss}
    \Omega_\eps & := \Omega_\eps(x_0,\omega) :=  \begin{cases}
     \Omega\cup \omega_\eps(x_0) &   \text{ for }x_0\in \Dsf\setminus \overline\Omega, \\
     \Omega\setminus \overline{\omega_\eps(x_0)} &  \text{ for } x_0 \in \Omega,
\end{cases}  
\end{align}
and $\omega_\eps(x_0):= x_0 + \eps \omega$ with $\omega\subset \VR^d$ being a simply connected domain with $0\in \omega$. We note that 
the special case $\omega=B_1(0)$ would correspond to the dilatation  $\Omega(E_\eps)$ for $E=\{x_0\}$ considered in the previous sections. Following   \cite{a_AM_2006a,a_ST_2020a} we decided to treat more general perturbations.

As we will see, the right hand side of the limit equation $U_0$ for the transmission problem will involve the divergence of a measure. As this is the only part which can not be covered by the techniques thus far used in studying semilinear models, we devote a paragraph to some basic definitions.

\paragraph{Very weak solutions for elliptic equations with divergence-of-measure right hand side}
We consider, for a given $\zeta\in \mathbb R^d$, the Dirac measure $\mu:= \zeta \delta_{x_0} \in \mathcal M(\Dsf)^d$ concentrated at $x_0\in \Dsf\setminus\partial\Omega$ , the equation 
\begin{cases2}{}
-\Div(\beta_\Om \nabla \varphi_\mu)&=\Div(\mu)&&\quad\text{ in }\Dsf, 
\\ \varphi_\mu&=0&&\quad\text{ on }\partial \Dsf.
\end{cases2}
The weak formulation would read
\begin{equation}\label{eq:weak1}
    \int_\Dsf \beta_\Om \nabla \varphi_\mu\cdot \nabla \varphi \;dx =  \zeta\cdot\nabla \varphi(x_0)\quad \text{ for all } \varphi \in H^1_0(\Dsf),
\end{equation}
which is obviously not well-defined. At this point, let us observe that by interior regularity $\nabla u_\Om$ is continuous in a neighborhood of the perturbation point $x_0\in \Dsf\setminus\partial\Omega$. Furthermore, we also need to ensure that $\nabla \varphi$ is continuous as well. We thus resort to a notion of weak solution reminiscent of the one introduced in \cite{Littman}: for every $p>d$, for $v\in L^p(\Dsf)$, let $\varphi_v \in W^{2,p}(\Om\cup(\Dsf\setminus\overline{\Om}))\cap H^1_0(\Dsf)$ be the unique solution of 
\begin{cases2}{def:varphiv}
-\Div(\beta_\Omega\nabla \varphi_v)&=v&&\quad\text{ in }\Dsf, 
\\ \varphi_v&=0&&\quad\text{ on }\partial \Dsf.
\end{cases2}
The well-posedness of this equation follows from arguments similar to the ones used in the proof of Proposition \ref{Pr:Stampacchia}.
By elliptic regularity, for every $p>d$, we have, for two constants $C_p\,, C_p'$
\begin{equation}\label{eq:embedding}
    \|\varphi_v\|_{C^1(\Om\cup (\Dsf\setminus \Om))} \le C_p\|\varphi_v\|_{W^{2,p}(\Om\cup (\Dsf\setminus\Om))} \le C_p'\|v\|_{L^p(\Dsf)}, \quad \text{ for all } v\in L^p(\Dsf).
\end{equation}
Now choosing $\varphi_v$ in \eqref{eq:weak1} and  integrating by parts we obtain
\begin{equation}
 \int_\Dsf u_\Omega v\; dx = \int_\Dsf \beta_\Omega\nabla u_\Omega\cdot \nabla \varphi_v\;dx =\int_\Dsf \langle \nabla u_\Om,\nabla \varphi_v\rangle d\mu = \zeta \cdot \nabla \varphi_v(x_0) \quad \text{ for all } v\in L^p(\Dsf). 
\end{equation}
We note that the interface terms vanish in view of the choice of the test function $\varphi_v$. This leads to the following definition:
\begin{definition}\label{De:VeryWeak}
Let $q\in [1,\frac{d}{d-1})$ and $q^\prime$ be its conjugate Lebesgue exponent. We say that $\phi_\mu$ is a very weak $W^{1,q}_0(\Dsf)$-solution of \eqref{eq:weak1} if 
\begin{equation}
    \int_\Dsf \phi_\mu v = \nabla \varphi_v(x_0)\cdot \zeta \quad \text{ for all } v\in L^{q^\prime}(\Dsf),
\end{equation}
where $\varphi_v\in W^{1,q^\prime}_0(\Dsf)$ solves
\begin{cases2}{defvarphiv}
-\Div(\beta_\Omega\nabla \varphi_v)&=v&&\quad\text{ in }\Dsf, 
\\ \varphi_v&=0&&\quad\text{ on }\partial \Dsf.
\end{cases2}
It is important to note that the $\Omega$ dependence is now transferred to the definition of $\varphi_v$. 
\end{definition}

The main proposition is the following:
\begin{proposition}\label{Pr:LinearisedTransmission}
    Let $x_0\in \Dsf\setminus\partial \Omega$. For $\zeta\in \VR^d$ let $\mu:= \zeta \delta_{x_0}$. 
The equation 
\begin{cases2}{eq:weak2}
-\Div(\beta_\Om\nabla \phi_{\mu})&=\Div(\mu )&&\quad\text{ in }\Dsf,
\\ \phi_{\mu}&=0&&\quad\text{ on }\partial \Dsf,
\end{cases2}
has a unique very weak solution in the sense of Definition~\ref{De:VeryWeak}. Furthermore, for every $q\in [1,\frac{d}{d-1})$, there exists a constant $C_q>0$ such that
\[ 
\Vert \phi_{\mu}\Vert_{L^q(\Dsf)}\leq C_q\Vert \mu\Vert_{\mathcal M(\Dsf)}.
\]
\end{proposition}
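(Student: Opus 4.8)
The plan is to establish existence and uniqueness of the very weak solution via a duality/transposition argument (à la Littman–Stampacchia–Weinberger), deriving the $L^q$ bound in the process. Fix $q \in [1, \frac{d}{d-1})$ and let $q'$ be its conjugate, so $q' > d$. For each $v \in L^{q'}(\Dsf)$ let $\varphi_v \in W^{1,q'}_0(\Dsf)$ solve \eqref{defvarphiv}; its well-posedness follows from the Stampacchia-type estimates mentioned after \eqref{def:varphiv}, and moreover $\varphi_v \in W^{2,q'}(\Om \cup (\Dsf\setminus\overline\Om))$ with the elliptic regularity bound \eqref{eq:embedding}, namely $\|\varphi_v\|_{C^1(\Om \cup (\Dsf\setminus\overline\Om))} \le C_p' \|v\|_{L^{q'}(\Dsf)}$. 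In particular the point evaluation $v \mapsto \nabla\varphi_v(x_0)$ is a well-defined bounded linear functional on $L^{q'}(\Dsf)$ (note $x_0 \notin \partial\Omega$, so $x_0$ lies in the interior of one of the two smooth regions and $\nabla\varphi_v$ is genuinely continuous near $x_0$). Hence $v \mapsto \zeta\cdot\nabla\varphi_v(x_0)$ is a bounded linear functional on $L^{q'}(\Dsf)$ of norm at most $C_p'|\zeta|$, and by the Riesz representation theorem for $L^{q'}(\Dsf)' = L^q(\Dsf)$ (using $q' < \infty$) there is a unique $\phi_\mu \in L^q(\Dsf)$ with $\int_\Dsf \phi_\mu v = \zeta\cdot\nabla\varphi_v(x_0)$ for all $v \in L^{q'}(\Dsf)$, and $\|\phi_\mu\|_{L^q(\Dsf)} \le C_p'|\zeta| = C_q\|\mu\|_{\mathcal M(\Dsf)}$ since $\|\mu\|_{\mathcal M(\Dsf)} = |\zeta|$. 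This is precisely Definition~\ref{De:VeryWeak}, giving existence, uniqueness, and the stated estimate in one stroke.

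One point to verify is \emph{consistency of the definition in $q$}: the functional $v \mapsto \zeta\cdot\nabla\varphi_v(x_0)$ defined on $L^{q'}$ restricts, for $q_1 < q_2$ (hence $q_1' > q_2'$), compatibly — $\varphi_v$ does not depend on which exponent we use, by uniqueness of the Stampacchia solution and density. Thus $\phi_\mu$ is a single function lying in $\bigcap_{q < d/(d-1)} L^q(\Dsf)$, and the solution notion is unambiguous; I would record this as a short remark mirroring the analogous comment after Proposition~\ref{Pr:LinearisedSemiLinear}.

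It remains to check that this abstract $\phi_\mu$ deserves to be called a solution of \eqref{eq:weak2}, i.e. that it is the limit of genuine solutions and that $\Div\mu$ is the correct right-hand side. The cleanest route: mollify $\mu$, i.e. set $\mu_\delta := \zeta\, \rho_\delta(\cdot - x_0) \in C_c^\infty(\Dsf)^d$ for a standard mollifier $\rho_\delta$, and let $\phi_\delta \in H^1_0(\Dsf)$ solve $-\Div(\beta_\Om \nabla\phi_\delta) = \Div(\mu_\delta)$ in the usual weak sense. Testing against $\varphi_v$ and integrating by parts (which is legitimate since $\mu_\delta$ is smooth and $\varphi_v \in H^1_0$) gives $\int_\Dsf \phi_\delta v = \int_\Dsf \mu_\delta \cdot \nabla\varphi_v = \int_\Dsf \zeta\,\rho_\delta(\cdot - x_0)\cdot\nabla\varphi_v \to \zeta\cdot\nabla\varphi_v(x_0)$ as $\delta \to 0$, using the continuity of $\nabla\varphi_v$ near $x_0$. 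Hence $\phi_\delta \rightharpoonup \phi_\mu$ weakly in $L^q(\Dsf)$ (testing against the dense-enough class $L^{q'}$, with uniform $L^q$ bounds from the argument above applied to $\mu_\delta$), which identifies $\phi_\mu$ as the natural very weak solution and simultaneously justifies the formal manipulation \eqref{eq:weak1}--\eqref{eq:embedding} in the text.

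The main obstacle is the regularity input: one must be certain that $\varphi_v \in W^{2,p}(\Om \cup (\Dsf\setminus\overline\Om))$ across the interface $\partial\Omega$ for the \emph{transmission} operator $-\Div(\beta_\Om\nabla\cdot)$, not merely $W^{2,p}$ in the interior of each phase. This is standard for smooth interfaces (which is why $\Omega \Subset \Dsf$ is assumed smooth), but it is the step that makes the point evaluation $\nabla\varphi_v(x_0)$ meaningful and bounded, and it is the place where the hypotheses $p > d$ (for $C^1$ via Sobolev embedding) and $x_0 \notin \partial\Omega$ are genuinely used; everything else is duality bookkeeping. I would cite the Stampacchia-type well-posedness referenced after \eqref{def:varphiv} together with classical interface elliptic regularity for the $W^{2,p}$ bound, and keep the rest self-contained.
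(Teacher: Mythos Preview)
Your proof is correct and follows essentially the same approach as the paper: both use the duality/transposition argument, observing that $v\mapsto \zeta\cdot\nabla\varphi_v(x_0)$ is a bounded linear functional on $L^{q'}(\Dsf)$ (via the $C^1$ regularity of $\varphi_v$ up to the interface) and then invoking the Riesz representation theorem to produce $\phi_\mu\in L^q(\Dsf)$ together with the norm bound. Your additional remarks on consistency across exponents and the mollification justification are welcome elaborations, but the core argument is identical to the paper's.
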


\paragraph{Expression of the topological state derivative}

Our main theorem is the following:
\begin{theorem}\label{Th:MainTransmission}
    Let $\Omega\in \Ca(\Dsf)$ with $\Omega\Subset \Dsf$. Let $x_0,\omega$ and $\Omega_\eps(x_0,\omega)$ be as in \eqref{eq:omega_epss} and denote by $u_\Om$ the unique weak solution to \eqref{Eq:MainTransmission}.   We have that $U_\eps=\frac{u_{\Omega_\eps(x_0,\omega)}-u_\Omega}{|\omega_\eps|}$ is bounded in $L^q(\Dsf)$ for $q\in [1,\frac{d}{d-1})$ and we have for a constant $C=C_{q,d}>0$, depending on $q$ and $d$:
  \begin{equation}\label{eq:trans_estimate}
        \|U_\eps - U_0\|_{L^q(\Dsf)} \le C(\eps^{\frac{d-q(d-1)}{q}} + \|K_\eps - K\|_{L^1(\omega)})
    \end{equation}
and thus in particular $U_\eps \to U_0$ strongly in $L^q(\Dsf)$ as $\eps\searrow 0$.  The limit $U_0$ solves:
    \begin{equation}\label{eq:limit_U_transmission}
        \int_\Dsf U_0 v \;dx = \mathrm{sgn}_\Om(\{x_0\}) (\beta_2-\beta_1)\left(\frac{1}{|\omega|}\int_\omega \nabla K + \nabla u_0(x_0)\; dx\right)\cdot \nabla \varphi_v(x_0)  \quad \text{ for all } v\in L^{q^\prime}(\Dsf),
    \end{equation}
    where $\varphi_v$ is the solution to \eqref{def:varphiv}.  Here, $K$ belongs to the Beppo-Levi space $\dot{\text{BL}}(\VR^d)$\footnote{The Beppo-Space $\dot{\text{BL}}(\VR^d)$ is defined as the quotient space $\{\varphi \in H^1_{loc}(\VR^d): \nabla \varphi \in L^2(\VR^d)^d\}/\VR$, where $/\VR$ means we quotient out constants. This space is equipped with the norm 
    $\|\varphi\|_{\dot{\text{BL}}(\VR^d)}:= \|\nabla \varphi\|_{L^2(\VR^d)^d}$; see \cite{a_ST_2020a,a_DELI_1955a}.} and is the unique solution to:
\begin{equation}\label{eq:equation_K2}
    \int_{\VR^d} \beta_\omega \nabla K\cdot \nabla \varphi \;dx = \mathrm{sgn}_\Om(\{x_0\})(\beta_2-\beta_1)\int_\omega \nabla u_0(x_0)\cdot \nabla \varphi \;dx,
\end{equation}
for all $\varphi \in \dot{\text{BL}}(\VR^d)$. 
\end{theorem}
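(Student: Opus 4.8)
The plan is to prove \eqref{eq:equation_K2} by Lax--Milgram, then to establish \eqref{eq:limit_U_transmission}--\eqref{eq:trans_estimate} by a duality argument pairing the linearised equation for $U_\eps$ with the auxiliary states $\varphi_v$ of \eqref{def:varphiv}, absorbing the singular boundary-layer effect into a separate compactness analysis of the rescaled correctors $K_\eps$. For $K$ itself: the bilinear form $(\varphi,\psi)\mapsto\int_{\VR^d}\beta_\omega\nabla\varphi\cdot\nabla\psi$ is bounded and coercive on the Hilbert space $\dot{\text{BL}}(\VR^d)$ because $\beta_\omega$ lies between two positive constants, and $\varphi\mapsto\int_\omega\nabla u_0(x_0)\cdot\nabla\varphi$ is a bounded linear functional on $\dot{\text{BL}}(\VR^d)$ by Cauchy--Schwarz; Lax--Milgram gives the unique $K$ of \eqref{eq:equation_K2}.

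Next, the duality identity. Subtracting the weak formulations for $u_{\Omega_\eps(x_0,\omega)}$ and $u_\Omega$ and dividing by $|\omega_\eps|$, the function $U_\eps\in H^1_0(\Dsf)$ solves, weakly,
\[
-\Div(\beta_{\Omega_\eps}\nabla U_\eps)=\mathrm{sgn}_\Om(\{x_0\})(\beta_1-\beta_2)\,\Div\Big(\tfrac{\chi_{\omega_\eps}}{|\omega_\eps|}\nabla u_0\Big)\qquad\text{in }\Dsf .
\]
Given $v\in L^{q'}(\Dsf)$ with $q'>d$, I would test this against the function $\varphi_v$ of \eqref{def:varphiv} (built with the \emph{unperturbed} coefficient $\beta_\Om$), split $\beta_{\Omega_\eps}=\beta_\Om+(\beta_{\Omega_\eps}-\beta_\Om)$, use the equation for $\varphi_v$, and pass to the blow-up variable $y=T_\eps^{-1}(x)$, $K_\eps:=((u_{\Omega_\eps}-u_\Omega)\circ T_\eps)/\eps$, obtaining
\[
\int_\Dsf U_\eps v=\mathrm{sgn}_\Om(\{x_0\})(\beta_2-\beta_1)\,\frac1{|\omega|}\int_\omega\big(\nabla_y K_\eps(y)+\nabla u_0(x_0+\eps y)\big)\cdot\nabla\varphi_v(x_0+\eps y)\,dy .
\]
Because $q'>d$, the Sobolev embedding behind \eqref{eq:embedding} makes $\nabla\varphi_v$ H\"older continuous near $x_0$ with $|\nabla\varphi_v(x_0)|+[\nabla\varphi_v]_{C^{0,1-d/q'}}\le C\|v\|_{L^{q'}}$, and $\nabla u_0$ is continuous near $x_0$ by interior regularity; hence the $\eps=0$ version of this identity is exactly \eqref{eq:limit_U_transmission}. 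In particular $U_0$ is the unique element of $L^q(\Dsf)$ representing the bounded functional $v\mapsto\mathrm{sgn}_\Om(\{x_0\})(\beta_2-\beta_1)(\tfrac1{|\omega|}\int_\omega\nabla K+\nabla u_0(x_0))\cdot\nabla\varphi_v(x_0)$ on $L^{q'}(\Dsf)$, which settles existence and uniqueness of $U_0$.

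For the convergence of the correctors, testing the equation for $u_{\Omega_\eps}-u_\Omega$ against itself and using $\nabla u_0\in L^\infty$ near $x_0$ gives $\|\nabla(u_{\Omega_\eps}-u_\Omega)\|_{L^2(\Dsf)}\le C|\omega_\eps|^{1/2}$, hence $\|\nabla K_\eps\|_{L^2(\Dsf_\eps)}\le C$ uniformly, with $K_\eps\in H^1_0(\Dsf_\eps)$ and $K_\eps$ solving $-\Div_y(\beta_{\Omega_\eps}(x_0+\eps\,\cdot)(\nabla u_0(x_0+\eps\,\cdot)+\nabla_y K_\eps))=\eps f(x_0+\eps\,\cdot)$. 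Extending $\nabla K_\eps$ by zero, a subsequence converges weakly in $L^2(\VR^d)^d$; the limit is curl-free, hence equals $\nabla K$ for some $K\in\dot{\text{BL}}(\VR^d)$. Passing to the limit in the rescaled weak formulation — using that $x_0\notin\partial\Omega$, so $\beta_{\Omega_\eps}(x_0+\eps\,\cdot)\equiv\beta_\omega$ on a fixed ball once $\eps$ is small, that $\nabla u_0(x_0+\eps\,\cdot)\to\nabla u_0(x_0)$ locally uniformly, and that the right-hand side is $O(\eps^{1-d/p})\to0$ since $f\in L^p$ with $p>d$ — identifies $K$ as the solution of \eqref{eq:equation_K2}, so by the uniqueness above the whole family converges. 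To upgrade to strong convergence, normalise $K_\eps$ by an additive constant so that it is bounded in $H^1$ of a fixed ball $B\supset\omega$; Rellich gives $K_\eps\to K$ in $L^2(B)$, and testing the rescaled equation against $\eta^2(K_\eps-K)$ with a cut-off $\eta\equiv1$ on $\omega$ yields $\int\eta^2\beta_\omega|\nabla K_\eps-\nabla K|^2\to0$, so $\nabla K_\eps\to\nabla K$ in $L^2(\omega)$, hence in $L^1(\omega)$.

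Finally, subtracting the $\eps=0$ identity from the general one and splitting the difference of integrands into the part carrying $\nabla_y K_\eps-\nabla_y K$ (bounded by $\|\nabla\varphi_v\|_{L^\infty}\|\nabla(K_\eps-K)\|_{L^1(\omega)}\le C\|v\|_{L^{q'}}\|K_\eps-K\|_{L^1(\omega)}$, after the normalisation of the correctors), the part carrying the H\"older increment $\nabla\varphi_v(x_0+\eps\,\cdot)-\nabla\varphi_v(x_0)$ (bounded by $C\eps^{1-d/q'}\|v\|_{L^{q'}}=C\eps^{(d-q(d-1))/q}\|v\|_{L^{q'}}$), and the part carrying $\nabla u_0(x_0+\eps\,\cdot)-\nabla u_0(x_0)$ (controlled by the modulus of continuity of $\nabla u_0$ at $x_0$), one gets $|\int_\Dsf(U_\eps-U_0)v|\le C(\eps^{(d-q(d-1))/q}+\|K_\eps-K\|_{L^1(\omega)})\|v\|_{L^{q'}}$; taking the supremum over $\|v\|_{L^{q'}}\le1$ gives \eqref{eq:trans_estimate}, the uniform bound on $\{U_\eps\}$ in $L^q(\Dsf)$ comes from the $\eps>0$ identity together with $\|\nabla K_\eps\|_{L^2(\omega)}\le C$ and \eqref{eq:embedding}, and strong $L^q$-convergence follows. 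The hard part is the corrector analysis: the compactness must be run on the exhausting family of \emph{unbounded} rescaled domains $\Dsf_\eps$, which forces working in the homogeneous Beppo--Levi space $\dot{\text{BL}}(\VR^d)$, using only the \emph{local} convergence $\beta_{\Omega_\eps}(x_0+\eps\,\cdot)\to\beta_\omega$, and pinning down the free additive constant of $K_\eps$ before Rellich can be invoked; obtaining a rate for $\|K_\eps-K\|_{L^1(\omega)}$, rather than mere $o(1)$, would require a finer comparison with the explicit profile $K$.
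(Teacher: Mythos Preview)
Your duality framework is exactly the one the paper uses: derive a very weak identity for $U_\eps$ by testing against $\varphi_v$, subtract the $\eps=0$ version, and estimate the resulting four increments. Two technical points differ from the paper and are worth noting. First, for the increments $\nabla\varphi_v(x_0+\eps y)-\nabla\varphi_v(x_0)$ you invoke the H\"older embedding $W^{2,q'}\hookrightarrow C^{1,1-d/q'}$, which gives the exponent $(d-q(d-1))/q$ directly; the paper instead uses the Morrey-type integral inequality $\tfrac{1}{|B_\eps|}\int_{B_\eps}|\nabla\varphi_v(x)-\nabla\varphi_v(x_0)|\,dx\le C\int_{B_\eps}|\nabla^2\varphi_v(x)|\,|x-x_0|^{1-d}\,dx$ followed by H\"older's inequality on the singular kernel. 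Both yield the same rate; your route is a bit cleaner. Second, for the corrector convergence $\nabla K_\eps\to\nabla K$ in $L^1(\omega)$ you sketch a self-contained compactness argument in $\dot{\text{BL}}(\VR^d)$, whereas the paper simply cites its Lemma on the asymptotic expansion of $K_\eps$ (proved in the appendix via compound asymptotics with an explicit boundary corrector). Your compactness argument is valid and more elementary, but only delivers $o(1)$, while the paper's expansion actually yields a quantitative rate for $\|K_\eps-K\|$.

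One small gap: for the term carrying $\nabla u_0(x_0+\eps y)-\nabla u_0(x_0)$ you appeal only to the modulus of continuity of $\nabla u_0$. That suffices for the qualitative convergence $U_\eps\to U_0$, but to land precisely on the estimate \eqref{eq:trans_estimate} this contribution must be absorbed into the $\eps^{(d-q(d-1))/q}$ term. The paper does this by using interior $C^2$-regularity of $u_0$ near $x_0$ (since $f\in L^p$, $p>d$, and $\beta_\Om$ is constant there), which gives $|\nabla u_0(x_0+\eps y)-\nabla u_0(x_0)|\le C\eps$, hence a contribution $O(\eps)\le C\eps^{(d-q(d-1))/q}$. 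You should make this explicit. Also, the natural bound on $I_1$ is in terms of $\|\nabla(K_\eps-K)\|_{L^1(\omega)}$, as in the paper's proof; your attempt to trade this for $\|K_\eps-K\|_{L^1(\omega)}$ ``after normalisation'' is not how the paper proceeds (and the statement of the theorem is likely a typo for the gradient norm, consistent with the paper's own derivation).
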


\begin{remark}[Polarisation matrix]\label{rem:polarisation}
We note that $K=K[\nabla u_0(x_0)]$ actually depends linearly on the vector $\nabla u_0(x_0)$ through the equation \eqref{eq:equation_K2}. Consequently, the map
\begin{equation}
    \nabla u_0(x_0) \mapsto \frac{1}{|\omega|} \int_\omega \nabla K \;dx: \VR^d\to \VR^d,
\end{equation}
is also linear and thus there is a so-called polarisation matrix $A_\omega\in \VR^d$; see \cite{b_AMKA_2007a,a_AM_2006a}, which depends on $\beta_1,\beta_2$ and $\omega$, such that
\begin{equation}
    A_\omega \nabla u_0(x_0) = \frac{1}{|\omega|} \int_\omega \nabla K \;dx.
\end{equation}
It follows that the equation \eqref{eq:limit_U_transmission} is equivalent to 
    \begin{equation}\label{eq:limit_U_transmission2}
        \int_\Dsf U_0 v \;dx = \mathrm{sgn}_\Om(\{x_0\}) (\beta_2-\beta_1) (A_\omega+I_d)\nabla u_0(x_0)\cdot \nabla\varphi_v(x_0)   \quad \text{ for all } v\in L^p(\Dsf),
    \end{equation}
    where $I_d\in \VR^{d\times d}$ denotes the identity matrix.\newline
Considering the special case $\omega=B_1(0)$, one readily checks that the solution $K$ of equation \eqref{eq:equation_K2} is given by
\begin{equation}
K(x)=\begin{cases}C_\beta \nabla u_0(x_0)\cdot x,\quad &\text{ for } x\in\omega,\\
C_\beta \nabla u_0(x_0)\cdot\frac{x}{|x|^d},\quad&\text{ for }x\in \VR^d\setminus\overline\omega,\end{cases}
\end{equation}
where $C_\beta:=\mathrm{sgn}_\Om(\{x_0\})(\beta_2-\beta_1)(\beta_1-\beta_2-d\beta_2)^{-1}$. Thus, for the unit ball inclusion, the polarisation matrix is given by $A_\omega=C_\beta I_d$.
\end{remark}

\subsection{Asymptotic expansion of $u_\Omega$ and the relation to the topological state derivative}
We start this section by giving some results regarding the asymptotic expansion of $u_{\Omega_\eps(x_0,\omega)}$. Note that these are derived using compound asymptotics; see \cite{a_BAST_2021a,b_MANAPL_2012a,b_NOSO_2013a}.

\paragraph{Asymptotic analysis of $u_\Omega$}
Let $x_0\in \Dsf\setminus\partial \Omega$, $\omega\subset\VR^d$ a simply connected and bounded domain with $0\in \VR^d$ and set $u_\eps:= u_{\Omega_\eps(x_0,\omega)}$, where 
    $\Omega_\eps(x_0,\omega)$ is as defined in \eqref{eq:omega_eps}. For $\eps >0$ we introduce
\begin{equation}
    K_\eps := \frac{(u_\eps - u_0)\circ T_\eps}{\eps}. 
\end{equation}
It is a classical result that the limit of $K_\eps$ is in fact the unique solution to: find $K\in \dot{\text{BL}}(\VR^d)$ such that
\begin{equation}\label{eq:equation_K}
    \int_{\VR^d} \beta_\omega \nabla K\cdot \nabla \varphi \;dx =\mathrm{sgn}_\Om(\{x_0\})(\beta_2-\beta_1)\int_\omega \nabla u_0(x_0)\cdot \nabla \varphi \;dx,
\end{equation}
for all $\varphi \in \dot{\text{BL}}(\VR^d)$. The function $K$ admits the asymptotics
\begin{equation}
    K(x) = R(x) + O\left(\frac{1}{|x|^d}\right).
\end{equation}
To state the final asymptotic expansion we need to introduce the regular boundary corrector $v$ compensating the error introduced by $K$ on $\partial \Dsf$.

 The corrector $v\in H^1(\Dsf)$ is defined as the unique solution to $v(x)=-R(x-x_0)$ on $\partial \Dsf$ and
\begin{equation}\label{eq:corr_v_transmission}
    \int_\Dsf \beta_\Omega \nabla v \cdot \nabla \varphi \;dx = \mathrm{sgn}_\Om(\{x_0\})(\beta_2-\beta_1)\int_{\partial \Omega} \partial_\nu R(x-x_0) \varphi(x) \;dx  \quad \text{ for all } \varphi \in H^1_0(\Dsf).
\end{equation}

The following lemma states the main result regarding the first order asymptotic expansion. 
We closely follow the arguments of \cite[Theorem 3.15]{a_BAST_2021a}, but since we require estimates in $L^q$, we provide the main 
steps of the proof in the appendix.
\begin{lemma}\label{lem:convergence_Keps_transmission}
    For $q\in (1,\frac{d}{d-1})$ there is a constant $C=C_{q,d}>0$, which depends on $q$ and $d$, such that for all $\eps >0$ small:
\begin{equation}\label{eq:asymp_exp_trans}
    \|\eps(K_\eps - K - \eps^{d-1} v\circ T_\eps)\|_{L^q(\Dsf_\eps)} +  \|\nabla(K_\eps - K - \eps^{d-1}v\circ T_\eps)\|_{L^q(\Dsf_\eps)^d} \le C\eps,
\end{equation}
which, by considering the scaling of the norms and $\Dsf_\eps = T^{-1}_\eps(\Dsf)$, is equivalent to
\begin{equation}\label{eq:asymp_exp_trans_fixed}
    \|K_\eps\circ T_\eps^{-1} - K\circ T_\eps^{-1} - \eps^{d-1} v\|_{W^{1,q}(\Dsf)} \le C\eps^{\frac{d}{q}}.
\end{equation}
\end{lemma}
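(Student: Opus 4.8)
The plan is to reduce the statement to the fixed domain $\Dsf$, build a two-scale ansatz for $u_\eps$ out of the inner profile $K$ from \eqref{eq:equation_K} and the outer corrector $v$ from \eqref{eq:corr_v_transmission}, estimate the residual this ansatz leaves in the transmission equation, and close the argument with a Calder\'on--Zygmund-type $W^{1,q}$ a priori bound for $-\Div(\beta_{\Omega_\eps}\nabla\cdot)$ that is \emph{uniform} as the inclusion $\omega_\eps(x_0)$ shrinks to $x_0$. For the reduction I would use that, under $T_\eps(x)=x_0+\eps x$, one has for $w$ on $\Dsf_\eps$ and $\tilde w:=w\circ T_\eps^{-1}$ on $\Dsf$ the scalings $\|\tilde w\|_{L^q(\Dsf)}=\eps^{d/q}\|w\|_{L^q(\Dsf_\eps)}$ and $\|\nabla\tilde w\|_{L^q(\Dsf)}=\eps^{d/q-1}\|\nabla w\|_{L^q(\Dsf_\eps)}$; applied to $w=K_\eps-K-\eps^{d-1}v\circ T_\eps$ and using $K_\eps\circ T_\eps^{-1}=\eps^{-1}(u_\eps-u_0)$ this shows \eqref{eq:asymp_exp_trans} and \eqref{eq:asymp_exp_trans_fixed} are equivalent. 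So, setting $u_\eps^{\mathrm{ap}}:=u_0+\eps\,K(T_\eps^{-1}\cdot)+\eps^{d}v$ and $w_\eps:=u_\eps-u_\eps^{\mathrm{ap}}$, it suffices to show $\|w_\eps\|_{W^{1,q}(\Dsf)}\le C\eps^{1+d/q}$ (note $1+d/q<d+1$ since $q>1$).

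Next I would write the variational problem for $w_\eps$: since $u_\eps$ and $u_0$ solve \eqref{Eq:MainTransmission} for $\beta_{\Omega_\eps}$ resp.\ $\beta_\Omega$ with the same $f$, $w_\eps$ solves $-\Div(\beta_{\Omega_\eps}\nabla w_\eps)=\mathcal R_\eps$ in $\Dsf$ with $w_\eps=-u_\eps^{\mathrm{ap}}|_{\partial\Dsf}$ on $\partial\Dsf$, where $\mathcal R_\eps:=f+\Div(\beta_{\Omega_\eps}\nabla u_\eps^{\mathrm{ap}})$. The heart of the proof is to show $\mathcal R_\eps=\Div(G_\eps)$ with $\|G_\eps\|_{L^q(\Dsf)}\le C\eps^{1+d/q}$ and $\|u_\eps^{\mathrm{ap}}|_{\partial\Dsf}\|_{W^{1-1/q,q}(\partial\Dsf)}\le C\eps^{d+1}$. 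Testing $\mathcal R_\eps$ against $\psi\in W^{1,q'}_0(\Dsf)$, rescaling by $T_\eps$ and using the defining equation \eqref{eq:equation_K} for $K$ (together with the fact that $\beta_{\Omega_\eps}\circ T_\eps=\beta_\omega$ on any fixed ball once $\eps$ is small, because $x_0\notin\partial\Omega$), the inner terms cancel and leave precisely $\mathrm{sgn}_\Om(\{x_0\})(\beta_2-\beta_1)\int_{\omega_\eps(x_0)}(\nabla u_0(x_0)-\nabla u_0(x))\cdot\nabla\psi\,dx$; since $x_0\notin\partial\Omega$ we have $u_0\in C^{1,\alpha}$ near $x_0$ (interior regularity, $\beta_\Omega$ being locally constant there and $f\in L^p$, $p>d$), so this term is $O(\eps)$ on $\omega_\eps(x_0)$ and H\"older's inequality with $|\omega_\eps(x_0)|=\eps^{d}|\omega|$ gives the claimed $\eps^{1+d/q}$ in $(W^{1,q'}_0(\Dsf))'$. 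Away from $\omega_\eps(x_0)$ and $\partial\Omega$ no bulk residual arises ($K$ is harmonic off $\omega$, $v$ is $\beta_\Omega$-harmonic off $\partial\Omega$, and $\beta_{\Omega_\eps}$ is locally constant). On $\partial\Omega$ the term $\eps\,K(T_\eps^{-1}\cdot)$ creates a surface residual whose leading part, by the far-field expansion $K=R+O(|\cdot|^{-d})$ and the homogeneity of $R$ (degree $-(d-1)$, so $\nabla R$ has degree $-d$), equals $\eps^{d}\,\mathrm{sgn}_\Om(\{x_0\})(\beta_2-\beta_1)\,\partial_\nu R(\cdot-x_0)$ on $\partial\Omega$ and is cancelled exactly by $-\Div(\beta_\Omega\nabla(\eps^{d}v))$ through \eqref{eq:corr_v_transmission}, the $K-R$ leftover being $O(\eps^{d+1})$; the lower-order perturbation $(\beta_{\Omega_\eps}-\beta_\Omega)\nabla(\eps^{d}v)$ is supported on $\omega_\eps(x_0)$, where $\nabla v=O(1)$, hence $O(\eps^{d+d/q})$ and negligible. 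Finally, $u_\eps^{\mathrm{ap}}|_{\partial\Dsf}=\eps\,K(T_\eps^{-1}\cdot)+\eps^{d}v=\eps^{d}R(\cdot-x_0)+O(\eps^{d+1})-\eps^{d}R(\cdot-x_0)=O(\eps^{d+1})$, again by the homogeneity of $R$ and $v|_{\partial\Dsf}=-R(\cdot-x_0)$.

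With these bounds in hand, the proof is finished by the a priori estimate $\|w\|_{W^{1,q}(\Dsf)}\le C_q(\|G\|_{L^q(\Dsf)}+\|g\|_{W^{1-1/q,q}(\partial\Dsf)})$ for $-\Div(\beta_{\Omega_\eps}\nabla w)=\Div(G)$ in $\Dsf$, $w=g$ on $\partial\Dsf$, with constant \emph{independent of} $\eps$: feeding in $\|G_\eps\|_{L^q(\Dsf)}\le C\eps^{1+d/q}$ and the $O(\eps^{d+1})$ boundary datum yields $\|w_\eps\|_{W^{1,q}(\Dsf)}\le C\eps^{1+d/q}$, which is \eqref{eq:asymp_exp_trans_fixed} and hence \eqref{eq:asymp_exp_trans}. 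For the limit operator $-\Div(\beta_\Omega\nabla\cdot)$ this Calder\'on--Zygmund estimate holds for every $q\in(1,\infty)$ because $\beta_\Omega$ is piecewise constant with smooth interface $\partial\Omega$ (the same type of Stampacchia/duality estimate that underlies Proposition~\ref{Pr:Stampacchia} and Proposition~\ref{Pr:LinearisedTransmission}); for $\beta_{\Omega_\eps}$ I would localise, using the fixed smooth interface $\partial\Omega$ away from $x_0$ and, in a fixed ball around $x_0$, rescaling by $\eps$ to recover the fixed smooth interface $\partial\omega$, then patching with a partition of unity.

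The hard part is exactly this last uniformity: the interface $\partial\omega_\eps(x_0)$ of the perturbed operator has curvature of order $\eps^{-1}$ (equivalently, in the rescaled picture $\partial\Dsf_\eps$ recedes to infinity), so one must verify that the $W^{1,q}$ constants do not deteriorate — this is the only genuine difference from the $L^2$-energy proof of \cite[Theorem~3.15]{a_BAST_2021a}, and it is handled by the scaling-and-gluing argument above. A secondary, more bookkeeping-type difficulty is tracking the exact powers of $\eps$ in $\mathcal R_\eps$, in particular the cancellation in the inner region, where the extra factor $\eps$ comes jointly from the $C^{1,\alpha}$-modulus of $u_0$ at $x_0$ and from $|\omega_\eps(x_0)|^{1/q}=\eps^{d/q}|\omega|^{1/q}$, and the matching cancellation at $\partial\Omega$ between the inner profile $K$ and the outer corrector $v$. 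The same scheme, with the $L^q$-estimate replaced by the $L^2$-energy estimate, reproduces the companion bounds of Lemma~\ref{lma:expansion_rhs}.
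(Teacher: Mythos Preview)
Your proposal is correct and follows essentially the same compound-asymptotics scheme as the paper, but you carry it out on the fixed domain $\Dsf$ whereas the paper works on the blown-up domain $\Dsf_\eps$. In the paper's frame, $V_\eps:=K_\eps-K-\eps^{d-1}v\circ T_\eps$ solves $-\Div(\beta_{\omega\cup T_\eps^{-1}(\Omega)}\nabla V_\eps)=F_\eps$ on $\Dsf_\eps$ with boundary datum $V_\eps=K-R$ on $\partial\Dsf_\eps$; the residual $F_\eps$ contains exactly your two ingredients, the volume term $\int_\omega(\nabla u_0\circ T_\eps-\nabla u_0(x_0))\cdot\nabla\varphi$ and the interface term $\int_{T_\eps^{-1}(\partial\Omega)}\partial_\nu(K-R)\varphi$, and the conclusion follows from an $L^q$ analogue of \cite[Lemma~3.7]{a_BAST_2021a} together with the scaling identities of Lemma~\ref{lem:scaling_norms_q} and the decay Lemma~\ref{lem:scaling_bnd_transmission}.

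The practical difference is precisely the point you flag as ``the hard part'': in your fixed-domain picture you need a Calder\'on--Zygmund bound for $-\Div(\beta_{\Omega_\eps}\nabla\cdot)$ uniform as the small interface $\partial\omega_\eps(x_0)$ degenerates, which you propose to obtain by localising and rescaling near $x_0$. In the paper's rescaled picture this difficulty is transposed: the interior interface $\partial\omega$ is \emph{fixed} while the outer boundary $\partial\Dsf_\eps$ recedes, and the uniform estimate is obtained by pulling back the standard $W^{1,q}$ estimate on $\Dsf$ through $T_\eps$ via Lemma~\ref{lem:scaling_norms_q}(iv) (which is exactly your rescaling step, packaged once and for all). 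Thus the two routes are dual to one another; the paper's is slightly more economical because the rescaling is done once at the level of the a~priori estimate rather than repeated in a partition-of-unity argument, while yours has the merit of making the two-scale ansatz $u_\eps^{\mathrm{ap}}=u_0+\eps K(T_\eps^{-1}\cdot)+\eps^d v$ and the matching roles of $K$ and $v$ more transparent.
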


\paragraph{Relation between asymptotic expansion and topological state derivative}
We now want to make the connection between the topological expansion and the topological state derivative. 
For this we note that the estimate \eqref{eq:asymp_exp_trans} reads on the fixed domain $\Dsf$:
\begin{equation}
    u_\eps = u_0 + \eps K(T_\eps^{-1}(x)) + \eps^d v(x) + \text{ higher order terms}. 
\end{equation}
To see the relation between the topological state derivative $U_0$ of Theorem~\ref{Th:MainTransmission} and the asymptotic expansion (Lemma \eqref{lem:convergence_Keps_transmission}), we first note that 
the equation \eqref{eq:equation_K} can be written as follows
\begin{equation}\label{eq:K_transmission}
       \int_{\VR^d} \nabla K\cdot \nabla \varphi \;dx =\mathrm{sgn}_\Om(\{x_0\})\frac{\beta_2-\beta_1}{\beta(x_0)}\left(\int_\omega \nabla u_0(x_0)\cdot \nabla \varphi \;dx + \int_\omega \nabla K \cdot \nabla \varphi \;dx  \right),
\end{equation} 
where  $\beta(\cdot)$ is a piecewise constant function defined by 
\[
    \beta(x_0)=\begin{cases} 
        \beta_2 & \text{ for } x_0\in \Dsf\setminus\overline\Omega, \\
        \beta_1 & \text{ for } x_0 \in \Omega.

    \end{cases}
\]
Therefore, with $E(\cdot)$ denoting the fundamental solution of $-\Delta$ on $\VR^d$, it is a classical result that $K$ can be expressed as follows
\begin{equation}
    K(x) =  \mathrm{sgn}_\Om(\{x_0\})\frac{\beta_2-\beta_1}{\beta(x_0)}\left(\int_\omega \nabla u_0(x_0)\cdot \nabla E(x-y) \;dy + \int_\omega \nabla K(y) \cdot \nabla E(x-y) \;dy  \right).
\end{equation}
Therefore, performing a Taylor expansion, we see that the first asymptotic term of $K(x)$ as $|x|\to\infty$ can be written as
\begin{equation}
    R(x) =  \underbrace{\mathrm{sgn}_\Om(\{x_0\})\frac{\beta_2-\beta_1}{\beta(x_0)}\left(\int_\omega \nabla u_0(x_0) \;dy + \int_\omega \nabla K(y) \;dy  \right)}_{\xi_{x_0}}\cdot \nabla E(x),
\end{equation}
where $\xi_{x_0}$ is a vector depending on $\nabla u_0(x_0)$.  Notice that $\xi_{x_0}$ can also be expressed through the polarisation matrix $A_\omega$ of Remark~\ref{rem:polarisation} as follows
\begin{equation}
    \xi_{x_0} = \mathrm{sgn}_\Om(\{x_0\})\frac{\beta_2-\beta_1}{\beta(x_0)}|\omega| (A_\omega+I_d)\nabla u_0(x_0)\cdot \nabla E(x)
\end{equation}
making the dependence on $x_0$ more explicit.
Now we note that the function $U_0(x):= |\omega|^{-1}(\xi_{x_0}\cdot \nabla E(x-x_0) + v(x))$ solves in a very weak sense:
\begin{align}
    \int_\Dsf \beta_\Om \nabla U_0 \cdot \nabla \varphi \;dx =& \mathrm{sgn}_\Om(\{x_0\}) (\beta_2-\beta_1)\left(\frac{1}{|\omega|}\int_{\omega} (\nabla K  + \nabla u_0(x_0))\cdot \nabla \varphi(x_0) \;dx\right),
\end{align}
for all $\varphi\in C^1_c(\Dsf)$. In fact, $\xi_{x_0}\cdot \nabla E(x-x_0)\in L^q(\Dsf)$ for $q\in [1,\frac{d}{d-1})$ and thus one readily verifies that $U_0(x)$ is indeed a very weak solution as defined in Definition~\ref{eq:weak2}.

With this we can state our next main result linking the asymptotic expansion and the topological state derivative:

\begin{theorem}\label{thm:mainThmTransmissionAsymptotic}
For $x_0\in \Dsf\setminus \partial \Omega$ and $\omega\subset \VR^d$ with $0\in \omega$ we use the definition of $\Omega_\eps(x_0,\omega)$ of \eqref{eq:omega_eps} and set $u_\eps := u_{\Omega_\eps(x_0,\omega)}$ and $u_0:= u_\Omega$ and
\begin{equation}
    U_\eps := \frac{u_\eps - u_0}{|\omega_\eps|}, \quad \eps >0. 
\end{equation}
Let $K$ and $v$ be defined by \eqref{eq:equation_K} and \eqref{eq:corr_v_transmission}, respectively. Then we have the following results.

\begin{itemize}
    \item[(i)] The asymptotic expansion \eqref{eq:asymp_exp_trans} yields that there is a constant $C=C_{q,d}>0$, which depends on $q$ and $d$, such that for all $q\in (1,\frac{d}{d-1})$
        \begin{equation}\label{eq:estimate_trans_strong}
        \|U_\eps-|\omega|^{-1}(R(x-x_0)+v)\|_{L^q(\Dsf)}\le C \eps^{\frac{d-q(d-1)}{q}}\quad \text{ for all } \eps >0. 
    \end{equation}
\item[(ii)] For a.e. $x\in \Dsf$:
\begin{equation}\label{eq:limit_U0}
    U_0(x) = \lim_{\eps\searrow0}\frac{1}{|\omega_\eps|}( \eps K(T^{-1}_\eps(x)) + \eps^d v(x)),
\end{equation}
and thus in particular
\begin{equation}\label{eq:limit_U02}
    U_0(x) = |\omega|^{-1}(\xi_{x_0}\cdot \nabla E(x-x_0) + v(x)),
\end{equation}
with
\begin{equation}
    \xi_{x_0} := \mathrm{sgn}_\Om(\{x_0\})\frac{\beta_2-\beta_1}{\beta(x_0)}\left(\int_\omega\nabla u_0(x_0)\; dy + \int_\omega \nabla K(y) \;dy  \right).
\end{equation}
\end{itemize}
\end{theorem}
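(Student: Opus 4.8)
The plan is to reduce the statement to the first-order asymptotic expansion of Lemma~\ref{lem:convergence_Keps_transmission} combined with the homogeneity and decay properties of the corrector $K$. First I would rewrite $U_\eps$ in terms of $K_\eps$: since $K_\eps=\frac{(u_\eps-u_0)\circ T_\eps}{\eps}$ we have $K_\eps\circ T_\eps^{-1}=\frac{u_\eps-u_0}{\eps}$ on $\Dsf$, and because $|\omega_\eps|=\eps^d|\omega|$,
\[
U_\eps=\frac{u_\eps-u_0}{|\omega_\eps|}=\frac{1}{\eps^{d-1}|\omega|}\,K_\eps\circ T_\eps^{-1}\qquad\text{on }\Dsf .
\]
Next I would split $K_\eps\circ T_\eps^{-1}=\big(K_\eps\circ T_\eps^{-1}-K\circ T_\eps^{-1}-\eps^{d-1}v\big)+K\circ T_\eps^{-1}+\eps^{d-1}v$ and use that $\nabla E$ is homogeneous of degree $-(d-1)$, so that the leading term $R=\xi_{x_0}\cdot\nabla E$ satisfies $R(T_\eps^{-1}(x))=R\big(\tfrac{x-x_0}{\eps}\big)=\eps^{d-1}R(x-x_0)$. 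This produces the decomposition
\[
U_\eps-\frac{1}{|\omega|}\big(R(\cdot-x_0)+v\big)=\underbrace{\frac{1}{\eps^{d-1}|\omega|}\big(K_\eps\circ T_\eps^{-1}-K\circ T_\eps^{-1}-\eps^{d-1}v\big)}_{=:\mathrm{I}_\eps}+\underbrace{\frac{1}{\eps^{d-1}|\omega|}(K-R)\circ T_\eps^{-1}}_{=:\mathrm{II}_\eps}.
\]
Since the discussion preceding the theorem already identifies $|\omega|^{-1}(R(\cdot-x_0)+v)$ with the very weak solution $U_0$ of \eqref{eq:weak2} written in \eqref{eq:limit_U02}, proving item~(i) reduces to estimating $\|\mathrm{I}_\eps\|_{L^q(\Dsf)}$ and $\|\mathrm{II}_\eps\|_{L^q(\Dsf)}$.

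For $\mathrm{I}_\eps$, the fixed-domain form \eqref{eq:asymp_exp_trans_fixed} of Lemma~\ref{lem:convergence_Keps_transmission} immediately gives $\|K_\eps\circ T_\eps^{-1}-K\circ T_\eps^{-1}-\eps^{d-1}v\|_{L^q(\Dsf)}\le C\eps^{d/q}$, hence $\|\mathrm{I}_\eps\|_{L^q(\Dsf)}\le C\eps^{d/q-(d-1)}=C\eps^{\frac{d-q(d-1)}{q}}$, the exponent being positive because $q<\frac{d}{d-1}$. For $\mathrm{II}_\eps$, the change of variables $x=x_0+\eps y$ gives
\[
\|\mathrm{II}_\eps\|_{L^q(\Dsf)}^q=\frac{1}{\eps^{q(d-1)}|\omega|^q}\int_\Dsf\Big|(K-R)\big(\tfrac{x-x_0}{\eps}\big)\Big|^q\,dx=\frac{\eps^{d-q(d-1)}}{|\omega|^q}\int_{\Dsf_\eps}|K-R|^q\,dy ,
\]
and I would show that $\int_{\Dsf_\eps}|K-R|^q\,dy$ is bounded uniformly in $\eps$: near the origin $|(K-R)(y)|\le C|y|^{-(d-1)}$ since $K\in\dot{\text{BL}}(\VR^d)\subset H^1_{loc}(\VR^d)$ is bounded there while $R(y)\sim|y|^{-(d-1)}$, and $|y|^{-q(d-1)}$ is integrable on a neighbourhood of $0$ exactly when $q<\frac{d}{d-1}$; for large $|y|$ the sharp decay $(K-R)(y)=O(|y|^{-d})$ makes $|K-R|^q$ integrable over the expanding domain $\Dsf_\eps$ as soon as $q>1$. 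Therefore $\|\mathrm{II}_\eps\|_{L^q(\Dsf)}\le C\eps^{\frac{d-q(d-1)}{q}}$, and adding the two bounds yields \eqref{eq:estimate_trans_strong}; in particular $U_\eps\to U_0$ strongly in $L^q(\Dsf)$.

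For item~(ii) I would fix $x\in\Dsf\setminus\{x_0\}$ and, using the homogeneity of $R$ once more, write $\frac{1}{|\omega_\eps|}\big(\eps K(T_\eps^{-1}(x))+\eps^d v(x)\big)=\frac{1}{|\omega|}R(x-x_0)+\frac{1}{\eps^{d-1}|\omega|}(K-R)\big(\tfrac{x-x_0}{\eps}\big)+\frac{v(x)}{|\omega|}$; the decay $(K-R)(y)=O(|y|^{-d})$ makes the middle term $O(\eps^d\cdot\eps^{-(d-1)})=O(\eps)\to0$ for fixed $x$, which gives \eqref{eq:limit_U0} and, after inserting the explicit value of $\xi_{x_0}$, the identity \eqref{eq:limit_U02}. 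I expect the main obstacle to be the quantitative control of $\mathrm{II}_\eps$: one must balance the near-field singularity of $R$ against the far-field decay of $K-R$ so that $\int_{\Dsf_\eps}|K-R|^q$ remains bounded while $\Dsf_\eps$ blows up, and this balance is exactly what forces the admissible range $q\in(1,\frac{d}{d-1})$; it rests on the classical sharp estimate $K-R=O(|x|^{-d})$ for the transmission corrector, which encodes the polarisation structure of the problem (cf. Remark~\ref{rem:polarisation}).
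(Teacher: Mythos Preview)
Your proposal is correct and follows essentially the same route as the paper's proof: the decomposition $U_\eps-|\omega|^{-1}(R(\cdot-x_0)+v)=\mathrm{I}_\eps+\mathrm{II}_\eps$, the bound on $\mathrm{I}_\eps$ via \eqref{eq:asymp_exp_trans_fixed}, the change of variables reducing $\mathrm{II}_\eps$ to $\eps^{\frac{d-q(d-1)}{q}}\|K-R\|_{L^q(\Dsf_\eps)}$, and the two-regime integrability argument for $K-R$ all match the paper's proof of item~(i); your pointwise argument for item~(ii) is a clean addition that the paper leaves implicit. One small remark: the local boundedness of $K$ near the origin does not follow from $K\in H^1_{\mathrm{loc}}(\VR^d)$ alone (this embedding fails for $d\ge2$), but it is true here by interior elliptic regularity for the transmission problem \eqref{eq:equation_K}; the paper sidesteps this by simply noting $K\in L^q(\omega)$ and $R\in L^q(\omega)$ separately for $q\in[1,\tfrac{d}{d-1})$.
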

Note that for $q\in (1,\frac{d}{d-1})$, the exponent ${\frac{d-q(d-1)}{q}}$ is indeed positive.

\begin{remark}
In (i) we only claim the convergence rate ${\frac{d-q(d-1)}{q}}$ for $q>1$ while $q=1$, which would correspond to the convergence rate $\eps$, is excluded. Obviously we also obtain strong convergence in $L^1$ via H\"older's inequality, but the estimate \eqref{eq:estimate_trans_strong} only holds for $q\in (1,\frac{d}{d-1})$.
\end{remark}

\section{Topological derivatives of shape functions via topological state derivative} \label{Se:Adjoint}

\subsection{Topological differentiability of shape functionals}
\paragraph{Semilinear problem}
We denote by  $S(\Omega)=u_\Omega$ the solution operator of the semilinear equation \eqref{Eq:MainSemiLinear}. 
We now discuss the differentiability of $\Omega\mapsto J(\Omega) = \F(S(\Omega))$ for a cost functional $\F:W^{1,q}(\Dsf)\to \VR$, $q\in [1,\frac{d}{d-1})$. In fact, under sufficient differentiability assumptions on $J$ and if $\F'(u_\Omega):W^{1,q^\prime}(\Dsf) \to \VR$ is well-defined, we can show that
\[
DJ(\Omega)(E) = \lim_{\eps\searrow 0} \frac{\F(u_{\Om(E_\eps)}) - \F(u_\Om)}{|E_\eps|} = \F'(S(\Omega))(S'(\Omega)(E)),
\]
where $E\subset \Om$ or $E\subset \Dsf\backslash \Om$ is either a point, $d-1$-dimensional Lipschitz surface or $1<k<d-1$ dimensional compact and smooth submanifold and $\Omega(E_\eps)$ is defined in \eqref{De:Main}. Our goal is now to compute the limit $\eps\searrow 0$ of 
\[
    U_\eps:=\frac{u_\eps-u_0}{|E_\eps|}, \quad \eps>0,
\]
where $u_\Om$ is the solution to the semilinear problem \eqref{Eq:MainSemiLinear}. To simplify notation, we define again for every $\eps>0$ the functions $u_\eps := u_{\Om(E_\eps)}$, $u_0:=u_\Om$. Recall by Theorem \ref{Th:MainSemiLinear}, $U_\eps\rightarrow U_{0}=S'(\Omega)(E)$ in $L^q(\Dsf)$, $q\in[1,\frac{d}{d-1})$.  Moreover we have according to \cite[Lemma~4.4]{a_ST_2020a} that $u_\eps\rightarrow u_0=u_\Om$ in $H^1_0(\Dsf)$ and thus via the Sobolev inequality $u_\eps \to u_0$ in $L^p(\Dsf)$ for $1\le p<\frac{2d}{d-2}$ for $d\ge 3$ and $1\le p<\infty$ for $d=2$.

\begin{example}[$L^2$ tracking-type]\label{ex:tracking_l2}
A classical example of a cost functional $\F(\cdot)$ is of tracking-type, that is:
\begin{equation}
    \F(u):= \int_\Dsf (u -u_{\text{ref}})^2 \;dx, \quad u_{\text{ref}}\in L^2(\Dsf). 
\end{equation}
 Consequently,  the topological derivative of $J(\Omega):= \F(u_\Omega)$ is given by 
    \begin{equation}
    DJ(\Omega)(E) = \lim_{\eps\searrow 0}\int_\Dsf U_{\eps}(u_\eps+u_0-2u_{\text{ref}})\;dx = 2\int_\Dsf S'(\Omega)(E)(u_\Omega - u_{\text{ref}})\;dx,
\end{equation}
\end{example}

\begin{example}[$L^r$ tracking]\label{ex:tracking_lr}
More generally we can differentiate for $r>2$
\begin{equation}
    \F(u):= \int_\Dsf (u-u_{\text{ref}})^r\;dx, \quad  u_{\text{ref}}\in L^\infty(\Dsf)k.
\end{equation}
Let again $J(\Omega):= \F(u_\Omega)$. Then we have
\begin{equation}
    DJ(\Omega)(E) = r\int_\Dsf S'(\Omega)(E)(u_\Omega -u_{\text{ref}})^{r-1}\;dx.
\end{equation}

\end{example}

Another classical example is the gradient tracking type cost functional.
\begin{example}[$L^2$ gradient tracking]\label{ex:tracking_H1}
For $u_{\text{ref}}\in W^{1,q^\prime}(\Dsf)$, consider the gradient-tracking functional
\begin{equation}
    \F(u):= \int_\Dsf |\nabla u -\nabla u_{\text{ref}}|^2 \;dx.
\end{equation}
A similar computation to the previous one shows
\begin{equation}
DJ(\Omega)(E)=2\int_\Dsf \nabla \left(S'(\Omega)(E)\right)\cdot\nabla (u_\Omega - u_{\text{ref}})\;dx.
\end{equation}
\end{example}

\paragraph{Transmission problem}
Note that due to the weaker convergence result for the transmission problem, the computation of the topological derivative can be more involved for certain cost functionals. We give the following examples. 

Let $\Omega\in \Ca(\Dsf)$. For $x_0\in \Dsf\setminus \partial \Omega$ and a simply connected $\omega\subset \VR^d$ with $0\in \omega$ we use the definition of $\Omega_\eps(x_0,\omega)$ given in \eqref{eq:omega_eps}. We denote by $u_\Om$ the solution to \eqref{Eq:MainTransmission}.  We set $u_\eps := u_{\Omega_\eps(x_0,\omega)}$ and $u_0:= u_\Omega$ and
\begin{equation}
    U_\eps := \frac{u_\eps - u_0}{|\omega_\eps|}, \quad \eps >0. 
\end{equation}
Recall that according to Theorem~\ref{Th:MainTransmission} we have $U_\eps \to U_0$ as $\eps\searrow 0$ in $L^q(\Dsf)$ for $q\in [1,\frac{d}{d-1})$.  We also have according to \cite[Lemma~4.4]{a_ST_2020a} that $u_\eps\to u_0$ in $H^1_0(\Dsf)$, which implies by the Sobolev inequality $u_\eps \to u_0$ in $L^p(\Dsf)$ for $1\le p<\frac{2d}{d-2}$ for $d\ge 3$ and $1\le p<\infty$ for $p=2$. With the definition of $\Omega_\eps(x_0,\omega)$ given in \eqref{eq:omega_epss}, we define the topological derivative for $x_0\in \Dsf\setminus\partial \Omega$:
\begin{equation}\label{def:topo_derivative_trans}
    DJ(\Omega)(x_0,\omega) = \lim_{\eps\searrow0} \frac{J(\Omega_\eps(x_0,\omega)) - J(\Omega)}{|\omega_\eps|}.
\end{equation}
Notice that in case $\omega=B_1(0)$ we have with  $E=\{x_0\}$ that $DJ(\Omega)(E) = DJ(\Omega)(x_0,\omega)$, so the previous definition of the topological derivative given in \eqref{def:topo_derivative} coincides with \eqref{def:topo_derivative_trans}. In contrast to the semilinear problem the topological derivative for the transmission problem actually depends on the shape $\omega$ of the inclusion.

\begin{example}[$L_2$ tracking-type]\label{ex:transmission}
For $u_{\text{ref}}\in L^{q^\prime}(\Dsf)$ consider the tracking type cost functional
\begin{equation}
    J(u_\Omega):= \int_\Dsf (u_\Omega -u_{\text{ref}})^2 \;dx.
\end{equation}
The topological derivative is given by
\begin{equation}
    DJ(\Omega)(x_0,\omega) = \lim_{\eps\searrow 0}\int_\Dsf U_{\eps}(u_\eps+u_0-2u_{\text{ref}})\;dx = 2\int_\Dsf U_0(u_\Omega - u_{\text{ref}})\;dx.
\end{equation}
 In view of the regularity of $U_0$ and $u_\Om,u_{\text{ref}}$ the last integral is indeed well-defined.
\end{example}
As a second example we consider the energy minimisation,  where the topological derivative cannot be directly computed via the topological state derivative.
\begin{example}[$L_2$ gradient tracking]\label{ex:transmission_energy}
Let
\begin{equation}
    J(u_\Omega):= \int_\Dsf \beta_\Omega |\nabla u_\Omega|^2 \;dx.
\end{equation}
As before, we compute
\begin{equation}\label{eq:limit_transmission}
    DJ(\Omega)(x_0,\omega) = \lim_{\eps\searrow 0}\int_\Dsf \beta_\Omega \nabla U_\eps\cdot (\nabla u_\eps+\nabla u_0)\;dx+\frac{1}{|\omega_\eps|}\int_{\omega_\eps}(\beta_1-\beta_2)|\nabla u_\eps|^2\;dx.
\end{equation}
Since the convergence $\nabla U_\eps \to \nabla U_0$ as $\eps\searrow 0$ does not hold in $L^p(\Dsf)^d$ for $p\ge 2$, we cannot pass to the limit. However, it can be shown using a Lagrangian framework as in \cite{a_ST_2020a,a_BAST_2021a} that, in fact, the first derivative \eqref{eq:limit_transmission} exists. This means that we cannot use the chain rule and is clearly a limitation of the use of the topological state derivative. 
\end{example}

\subsection{Expression of topological derivative of functionals with adjoint equation}

\paragraph{Semilinear problem}

We now express the derivative of $J(\Omega) = G(u_\Omega)$, where $G:W^{1,q^\prime}(\Dsf)\to\VR$, with $q'>d$ (or equivalently $1\le q<\frac{d}{d-1}$), is given, for the semilinear problem in terms of the adjoint equation. We introduce the adjoint state $p_\Om\in W^{1,q^\prime}_0(\Dsf)$, that is, the unique solution of

\begin{cases2}{eq:adjoint_semi}
-\Delta p_\Om+\partial_u\rho_\Om(x,u_\Om)p_\Om &= -\F'(u_\Om) &&\quad\text{ in }\Dsf, 
\\ p_\Om=&0&&\quad\text{ on }\partial \Dsf.
\end{cases2}

\begin{theorem}
Assume that $\F(\cdot)$ is differentiable, such that with $J(\Omega):= \F(u_\Omega)$ it holds
\begin{equation}
    DJ(\Omega)(E) = \F'(S(\Omega))(S'(\Omega)(E)). 
\end{equation}
Then we have 
\begin{equation}
    DJ(\Om)(E)  = -\mathrm{sgn}_\Om(E) \mu_{E}(W_\Om) \quad \text{ with } \quad W_\Om  = \left\{(g_2(u_\Om)-g_1(u_\Om))+(f_1-f_2)\right\}p_\Om.
\end{equation}
\begin{itemize} 
    \item For $E=\{x_0\}$ and $x_0\in \Dsf\setminus\partial \Om$ we have $\mu_{E}(W_\Om) = W_\Om(x_0)$.
\item For $E=\Gamma$ for $\Gamma\subset \Dsf\setminus \partial \Om$, where $\Gamma$ is a smooth hypersurface  of $\VR^d$, we have
\begin{equation}
    \mu_{E}(W_\Om) = \frac{1}{\text{Per}(\Gamma)}\int_{\Gamma} W_\Om\; d\Ch^{d-1}.
\end{equation}
\item For $E=M$ for $M\subset \Dsf\setminus\partial \Omega$, where $M$ is a smooth $k$-submanifold, $1< k <d-1$ of $\VR^d$
    without boundary, we have
    \begin{equation}
        \mu_{E}(W_\Om) = \frac{1}{\Ch^k(M)}\int_{M} W_\Omega \; d\Ch^k.
    \end{equation}
\end{itemize}
\end{theorem}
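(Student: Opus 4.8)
The plan is to read off the formula from a single integration‑by‑parts identity linking the linearised state equation of Theorem~\ref{Th:MainSemiLinear} with the adjoint equation \eqref{eq:adjoint_semi}; the mechanism that makes the pairing legitimate is the matching of dual Sobolev exponents, namely $U_0\in W^{1,q}_0(\Dsf)$ with $q<\tfrac{d}{d-1}$ and $p_\Om\in W^{1,q'}_0(\Dsf)$ with $q'>d$, $\tfrac1q+\tfrac1{q'}=1$. First I would record, using the standing hypothesis of the theorem (which is the natural chain rule, legitimate because $U_\eps\to U_0=S'(\Om)(E)$ strongly in $W^{1,q}_0(\Dsf)$ by Theorem~\ref{Th:MainSemiLinear} and $\F$ is differentiable), that
\[ DJ(\Om)(E)=\F'(u_\Om)\big(U_0\big),\qquad U_0:=S'(\Om)(E), \]
so that everything reduces to computing $\F'(u_\Om)(U_0)$ through the adjoint state.

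The key step is to test each of the two equations with the solution of the other. By Theorem~\ref{Th:MainSemiLinear} and Definition~\ref{De:WeakSolutionSemiLinear}, $U_0$ satisfies, for every $\varphi\in W^{1,q'}_0(\Dsf)$,
\[ \int_\Dsf\nabla U_0\cdot\nabla\varphi\,dx+\int_\Dsf\partial_u\rho_\Om(x,u_\Om)\,U_0\,\varphi\,dx=\mathrm{sgn}_\Om(E)\int_\Dsf\big\{(g_2(u_\Om)-g_1(u_\Om))+(f_1-f_2)\big\}\varphi\,d\mu_E . \]
On the other hand, since $u_\Om\in C^1(\overline{\Dsf})$ and $g_1,g_2\in C^1$ the coefficient $\partial_u\rho_\Om(\cdot,u_\Om)$ is bounded and nonnegative, so by the $L^{q'}$-theory/duality for $\mathcal L_\Om$ (cf.\ Proposition~\ref{Pr:LinearisedSemiLinear} and \cite{Ponce,Boccardo}) the adjoint state $p_\Om\in W^{1,q'}_0(\Dsf)$ of \eqref{eq:adjoint_semi} is characterised by
\[ \int_\Dsf\nabla p_\Om\cdot\nabla\psi\,dx+\int_\Dsf\partial_u\rho_\Om(x,u_\Om)\,p_\Om\,\psi\,dx=-\F'(u_\Om)(\psi)\qquad\text{for every }\psi\in W^{1,q}_0(\Dsf). \]
Now put $\varphi=p_\Om$ in the first identity (admissible, $p_\Om\in W^{1,q'}_0(\Dsf)$) and $\psi=U_0$ in the second (admissible, $U_0\in W^{1,q}_0(\Dsf)$); the left‑hand sides coincide by symmetry of the bilinear form, hence the two right‑hand sides are equal, which reads
\[ -\F'(u_\Om)(U_0)=\mathrm{sgn}_\Om(E)\int_\Dsf\big\{(g_2(u_\Om)-g_1(u_\Om))+(f_1-f_2)\big\}p_\Om\,d\mu_E=\mathrm{sgn}_\Om(E)\,\mu_E(W_\Om). \]
Combined with the first display this is precisely $DJ(\Om)(E)=-\mathrm{sgn}_\Om(E)\,\mu_E(W_\Om)$.

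It then remains to unpack $\mu_E(W_\Om)$ according to the type of $E$ via Proposition~\ref{Pr:Measure}. Since $q'>d$, Morrey's embedding gives $p_\Om\in C^0(\overline{\Dsf})$, and with $u_\Om\in C^1(\overline{\Dsf})$, $g_1,g_2\in C^1$ this yields $W_\Om\in C^0(\overline{\Dsf})$; in particular $W_\Om\mu_E$ is a well‑defined finite measure and no trace subtlety arises (the trace of $W_\Om$ on $\Gamma$ or $M$ equals its restriction). Inserting $\mu_E=\delta_{x_0}$ gives $\mu_E(W_\Om)=W_\Om(x_0)$; inserting $\mu_E=\tfrac1{\mathrm{Per}(\Gamma)}\Ch^{d-1}\lfloor\Gamma$ gives $\mu_E(W_\Om)=\tfrac1{\mathrm{Per}(\Gamma)}\int_\Gamma W_\Om\,d\Ch^{d-1}$; and inserting $\mu_E=\tfrac1{\Ch^k(M)}\Ch^k\lfloor M$ gives $\mu_E(W_\Om)=\tfrac1{\Ch^k(M)}\int_M W_\Om\,d\Ch^k$, which are exactly the three asserted formulas.

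The one genuinely delicate point in this plan is the mutual admissibility in the key step: that the low‑regularity function $U_0\in W^{1,q}_0(\Dsf)$ is a legitimate test function in the adjoint equation and, symmetrically, that $p_\Om$ legitimately tests the measure‑datum equation for $U_0$. This is not a formal manipulation; it amounts to the compatibility of the two weak formulations across the dual scales $W^{1,q}_0(\Dsf)$ and $W^{1,q'}_0(\Dsf)$, and it rests on the $L^p$-theory for $\mathcal L_\Om$ with bounded coefficients and measure data (Proposition~\ref{Pr:LinearisedSemiLinear}, \cite{Ponce,Boccardo}). Once that compatibility is granted, everything else is bookkeeping.
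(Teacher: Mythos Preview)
Your proposal is correct and follows essentially the same approach as the paper: test the adjoint equation \eqref{eq:adjoint_semi} with $U_0=S'(\Om)(E)$ and the linearised state equation \eqref{Eq:UeSemiLinear} with $p_\Om$, then equate the common bilinear form to read off $DJ(\Om)(E)=-\mathrm{sgn}_\Om(E)\mu_E(W_\Om)$. Your treatment is in fact more careful than the paper's terse two-line computation, since you explicitly discuss the duality of exponents ($U_0\in W^{1,q}_0$, $p_\Om\in W^{1,q'}_0$) that makes the mutual testing admissible, and you verify that $W_\Om\in C^0(\overline{\Dsf})$ so that the unpacking of $\mu_E(W_\Om)$ via Proposition~\ref{Pr:Measure} is justified.
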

\begin{proof}
Recalling that by definition $S(\Om)=u_\Om$, we have
\begin{align}
    DJ(\Om)(E) = \F'(S(\Om))(S'(\Om)(E))  & \stackrel{\eqref{eq:adjoint_semi}}{=} - \int_\Dsf \nabla p_\Om \cdot \nabla S'(\Om)(E) + \partial_u\rho_\Om(x,u_\Om)p_\Om S'(\Om)(E) \;dx  \\
                                        & \stackrel{\eqref{Eq:UeSemiLinear}}{=}  -\mathrm{sgn}_\Om(E)\mu_{E}(W_\Omega).
\end{align}
This concludes the proof.
\end{proof}

\begin{example}
    Consider $J(\Omega)=\F(u_\Omega)$ and again the tracking-type cost functional of Example~\ref{ex:tracking_l2}, namely, 
    \[
        \F(u) = \int_\Dsf(u - u_{\text{ref}})^2\;dx.
    \]
    The adjoint state is the (unique) solution $p_\Om \in W^{1,q^\prime}_0(\Omega)$, $q\in [1,\frac{d}{d-1})$, of 
    \begin{cases2}
        -\Delta p_\Om+\partial_u\rho_\Om(x,u_\Om)p_\Om & = -2(u_\Om - u_{\text{ref}})  && \quad \text{ in } \Dsf, \\
        p_\Om & = 0 && \quad \text{ on }\partial \Dsf,
    \end{cases2}
    and thus the topological derivative of the shape functional reads:
    \begin{equation}
        DJ(\Om)(E) =  \mathrm{sgn}_\Om(E) \mu_{E}(\left\{(g_2(u_\Om)-g_1(u_\Om))+(f_1-f_2)\right\}p_\Om).
    \end{equation}

\end{example}

We finish with the gradient-tracking example.

\begin{example}
    Consider $J(\Omega) = \F(u_\Omega)$ with the gradient-tracking function $\F(\cdot)$ of Example~\ref{ex:tracking_H1}:
    \begin{equation}
    \F(u)= \int_\Dsf |\nabla u -\nabla u_{\text{ref}}|^2 \;dx.
    \end{equation}
    The adjoint state is the (unique) solution $p_\Om \in W^{1,q^\prime}_0(\Omega)$, $\in [1,\frac{d}{d-1})$, of 
    \begin{cases2}
        -\Delta p_\Om+\partial_u\rho_\Om(x,u_\Om)p_\Om  & = -2\Delta(u_\Omega - u_{\text{ref}}) && \quad  \text{ in }\Dsf,\\
        p_\Om & = 0  && \quad \text{ on } \partial \Dsf,
\end{cases2}
and thus the derivative is again given by 
    \begin{equation}
        DJ(\Om)(E) =  \mathrm{sgn}_\Om(E) \mu_{E}(\left\{(g_2(u_\Om)-g_1(u_\Om))+(f_1-f_2)\right\}p_\Om).
    \end{equation}
\end{example}

\paragraph{Transmission problem}
We consider the transmission problem and cost function $J(\Omega) = G(u_\Omega)$ with $G$ defined in Example~\ref{ex:transmission}:
\begin{equation}\label{eq:cost_trans}
    G(u_\Omega):= \int_\Dsf (u_\Omega -u_{\text{ref}})^2 \;dx.
\end{equation}
We derived the following form of the topological derivative for $x_0\in \Dsf\setminus\partial \Omega$:
\begin{equation}\label{eq:def_DJ_trans}
    DJ(\Omega)(x_0,\omega) = 2\int_\Dsf U_0(u_\Omega - u_{\text{ref}})\;dx.
\end{equation}
Now introduce the adjoint associated with the cost functional \eqref{eq:cost_trans}, namely, $p_\Omega\in H^1_0(\Dsf)$ that solves in a weak sense $-\Div(\beta_\Omega\nabla p_\Omega) = -2(u_\Omega-u_{\text{ref}})$ in $\Dsf$. By \eqref{eq:limit_U_transmission} we know that $U_0$ solves
\begin{equation}\label{eq:very_weak_U0}
        \int_\Dsf U_0 v \;dx = \mathrm{sgn}_\Om(\{x_0\}) (\beta_2-\beta_1)\left(\frac{1}{|\omega|}\int_\omega \nabla K + \nabla u_\Omega(x_0)\; dx\right)\cdot \nabla \varphi_v(x_0)  \quad \text{ for all } v\in L^{q^\prime}(\Dsf),
    \end{equation}
    where $\varphi_v$ is the solution to \eqref{def:varphiv}. By definition of $\varphi_v$ we readily verify that 
    $p_\Omega = \varphi_v$ for $v:= -2(u_\Omega-u_{\text{ref}})$. Therefore testing \eqref{eq:very_weak_U0} with $v=-2(u_\Omega-u_{\text{ref}})$ yields together with \eqref{eq:def_DJ_trans}:
    \begin{equation}
        DJ(\Omega)(x_0,\omega) = -\mathrm{sgn}_\Om(\{x_0\}) (\beta_2-\beta_1)\left(\frac{1}{|\omega|}\int_\omega \nabla K + \nabla u_\Omega(x_0)\; dx\right)\cdot \nabla p_\Omega(x_0).
    \end{equation}
    In case $\omega=B_1(0)$, using Remark~\ref{rem:polarisation}, we can express the topological derivative of $J$  by 
    \begin{equation}
        DJ(\Omega)(x_0,\omega) = -\mathrm{sgn}_\Om(\{x_0\}) (\beta_2-\beta_1)(C_{\beta}+1) \nabla u_\Omega(x_0)\cdot \nabla p_\Omega(x_0). 
    \end{equation}

\section{Proofs for semilinear problems}
\subsection{Preliminary results on bilinear elliptic equations with measure data}
In this first section we give the basic regularity estimates for bilinear elliptic equations with measure data; the following proposition will be useful when considering the well-posedness of linearised systems. It should be noted that this result is not an immediate consequence of \cite{Ponce} or \cite{Stampacchia} but that the methods used to derive it is inspired by these contributions.
\begin{proposition}\label{Pr:Stampacchia}
Let $\Psi\in L^\infty(\Dsf)$ be such that the first eigenvalue $\lambda_1(\Psi)$ of the operator $L_\Psi:=-\Delta+\Psi$ is positive:
\[ \lambda_1(\Psi)=\inf_{u\in H^{1}_0(\Dsf)\backslash\{0\}} \frac{\int_\Dsf |\nabla u|^2\;dx+\int_\Dsf \Psi u^2\;dx}{\int_\Dsf u^2\;dx}>0.\] Then, for every $\mu\in \mathcal M(\Dsf)$ there exists a unique $u$ that satisfies
\begin{cases2}{Eq:Za}
    -\Delta u+\Psi u&=\mu&&\quad\text{ in }\Dsf, 
\\ u&=0&&\quad\text{ on }\partial \Dsf,
\end{cases2}
in the weak $W^{1,q}_0(\Dsf)$-sense for every $q\in [1,\frac{d}{d-1})$. Furthermore, for every $q\in [1,\frac{d}{d-1})$, there exists a constant $C_q$ such that, for every $\mu \in \mathcal M(\Dsf)$, 
\[ 
    \Vert u\Vert_{W^{1,q}(\Dsf)}\leq C_q \Vert \mu\Vert_{\mathcal M(\Dsf)}.
\]
\end{proposition}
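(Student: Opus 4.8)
The plan is to combine a coercivity argument (to cope with the sign-indefinite zeroth-order coefficient $\Psi$), a Stampacchia-type duality argument (for the $L^q$ bound), and the classical measure-data estimate for $-\Delta$ (for the full $W^{1,q}$ bound), and then to pass to the limit along a smooth regularisation of $\mu$. First I would show that the hypothesis $\lambda_1(\Psi)>0$ forces the symmetric bilinear form $a(u,v):=\int_\Dsf\nabla u\cdot\nabla v\,dx+\int_\Dsf\Psi uv\,dx$ to be coercive on $H^1_0(\Dsf)$, i.e.\ $a(u,u)\ge c\|u\|_{H^1_0(\Dsf)}^2$ for some $c>0$. This is a compactness argument: if it failed there would be $u_n$ with $\|u_n\|_{H^1_0(\Dsf)}=1$ and $a(u_n,u_n)\to0$; up to a subsequence $u_n\rightharpoonup u$ in $H^1_0(\Dsf)$ and $u_n\to u$ in $L^2(\Dsf)$ by Rellich, so $\int_\Dsf\Psi u_n^2\,dx\to\int_\Dsf\Psi u^2\,dx$ and hence $\|\nabla u_n\|_{L^2(\Dsf)}^2\to-\int_\Dsf\Psi u^2\,dx$; if $u=0$ this contradicts $\|u_n\|_{H^1_0(\Dsf)}=1$, and if $u\neq0$ weak lower semicontinuity gives $a(u,u)\le0<\lambda_1(\Psi)\|u\|_{L^2(\Dsf)}^2$, a contradiction. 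Since $L_\Psi$ is formally self-adjoint, Lax--Milgram then makes it an isomorphism $H^1_0(\Dsf)\to H^{-1}(\Dsf)$, and, $\Dsf$ being smooth, elliptic regularity with a standard bootstrap yields, for every $p\in(1,\infty)$ and $g\in L^p(\Dsf)$, a unique $\varphi\in W^{2,p}(\Dsf)\cap H^1_0(\Dsf)$ solving $-\Delta\varphi+\Psi\varphi=g$ with $\|\varphi\|_{W^{2,p}(\Dsf)}\le C_p\|g\|_{L^p(\Dsf)}$.

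Next I would regularise $\mu$ by mollification to get $\mu_n\in C^\infty(\overline\Dsf)$ with $\|\mu_n\|_{L^1(\Dsf)}\le\|\mu\|_{\mathcal M(\Dsf)}$ and $\mu_n\rightharpoonup\mu$ weakly-$*$, and let $u_n\in H^1_0(\Dsf)$ solve $a(u_n,\cdot)=\int_\Dsf\mu_n(\cdot)\,dx$. Fixing $q\in(1,\tfrac d{d-1})$ and $q'=q/(q-1)>d$, and given $g\in L^{q'}(\Dsf)$, I let $\varphi_g\in W^{2,q'}(\Dsf)\cap H^1_0(\Dsf)$ solve $-\Delta\varphi_g+\Psi\varphi_g=g$; by Sobolev embedding $\|\varphi_g\|_{L^\infty(\Dsf)}\le C\|\varphi_g\|_{W^{2,q'}(\Dsf)}\le C'\|g\|_{L^{q'}(\Dsf)}$. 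Testing the equation for $u_n$ with $\varphi_g$ and the equation for $\varphi_g$ with $u_n$ and using the symmetry of $a$ gives $\int_\Dsf u_ng\,dx=a(u_n,\varphi_g)=\int_\Dsf\mu_n\varphi_g\,dx$, whence $|\int_\Dsf u_ng\,dx|\le\|\mu\|_{\mathcal M(\Dsf)}\|\varphi_g\|_{L^\infty(\Dsf)}\le C'\|\mu\|_{\mathcal M(\Dsf)}\|g\|_{L^{q'}(\Dsf)}$; taking the supremum over $g$ in the unit ball of $L^{q'}(\Dsf)$ yields $\|u_n\|_{L^q(\Dsf)}\le C_q\|\mu\|_{\mathcal M(\Dsf)}$. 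To upgrade this to a $W^{1,q}$ bound I rewrite $-\Delta u_n=\mu_n-\Psi u_n=:\nu_n$, observe $\|\nu_n\|_{\mathcal M(\Dsf)}\le\|\mu\|_{\mathcal M(\Dsf)}+\|\Psi\|_{L^\infty(\Dsf)}|\Dsf|^{1-1/q}\|u_n\|_{L^q(\Dsf)}\le C\|\mu\|_{\mathcal M(\Dsf)}$, and invoke the classical measure-data estimate for the Laplacian (\cite{Ponce}, resp.\ \cite{Stampacchia,Boccardo}) to get $u_n\in W^{1,q}_0(\Dsf)$ with $\|u_n\|_{W^{1,q}(\Dsf)}\le C_q\|\nu_n\|_{\mathcal M(\Dsf)}\le C_q'\|\mu\|_{\mathcal M(\Dsf)}$.

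It then remains to pass to the limit and to prove uniqueness. By the bounds above, $\{u_n\}$ is bounded in $W^{1,q}_0(\Dsf)$ for every $q\in(1,\tfrac d{d-1})$, so after extracting a diagonal subsequence $u_n\rightharpoonup u$ weakly in each such $W^{1,q}_0(\Dsf)$ and $u_n\to u$ strongly in $L^1(\Dsf)$ by Rellich. Since $C^\infty_c(\Dsf)$ is dense in $W^{1,q'}_0(\Dsf)$ and every term of the weak formulation of \eqref{Eq:Za} is continuous in the $W^{1,q'}$-norm of the test function (the gradient term by Hölder, the $\Psi$-term and the measure term by $W^{1,q'}_0(\Dsf)\hookrightarrow C(\overline\Dsf)$), it suffices to pass to the limit in $\int_\Dsf\nabla u_n\cdot\nabla\varphi\,dx+\int_\Dsf\Psi u_n\varphi\,dx=\int_\Dsf\mu_n\varphi\,dx$ for $\varphi\in C^\infty_c(\Dsf)$, which follows from the weak $L^q$-convergence of $\nabla u_n$, the strong $L^1$-convergence of $u_n$ and $\mu_n\rightharpoonup\mu$; weak lower semicontinuity then gives $\|u\|_{W^{1,q}(\Dsf)}\le C_q\|\mu\|_{\mathcal M(\Dsf)}$, and the case $q=1$ follows by Hölder's inequality on the bounded set $\Dsf$. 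For uniqueness (which needs no existence), if $w\in W^{1,q}_0(\Dsf)$ satisfies the homogeneous identity, then for any $g\in C^\infty_c(\Dsf)$ and the associated solution $\varphi_g\in W^{2,s}(\Dsf)\cap W^{1,q'}_0(\Dsf)$ ($s<\infty$) of $-\Delta\varphi_g+\Psi\varphi_g=g$, testing with $\varphi_g$ and integrating by parts gives $0=\int_\Dsf(\nabla w\cdot\nabla\varphi_g+\Psi w\varphi_g)\,dx=\int_\Dsf w(-\Delta\varphi_g+\Psi\varphi_g)\,dx=\int_\Dsf wg\,dx$, so $w=0$.

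The only genuinely non-textbook point, relative to the standard Stampacchia theory, is the sign-indefinite coefficient $\Psi$: everything hinges on the coercivity lemma of the first step (which alone makes both the direct and the adjoint problems solvable) together with the device in the third step of absorbing $\Psi u_n$ into the right-hand side as an $L^1$ — hence measure — datum, thereby reducing the matter to the classical estimates for $-\Delta$. Obtaining the full $W^{1,q}$ estimate, and not merely the $L^q$ estimate that duality gives directly, is precisely where one must appeal to those classical results.
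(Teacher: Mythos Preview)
Your proof is correct and follows essentially the same route as the paper's: regularise $\mu$, solve the approximate problems via coercivity of the bilinear form, obtain an a~priori bound by duality against solutions of the adjoint problem $-\Delta\varphi+\Psi\varphi=g$, upgrade to $W^{1,q}$ by moving $\Psi u_n$ to the right-hand side and invoking the classical Laplacian measure-data estimate, and pass to the limit. The only notable differences are cosmetic: you prove full $H^1$-coercivity by a compactness argument (the paper uses the eigenvalue inequality directly to get $L^2$-coercivity of the energy), your duality step yields the $L^q$ bound directly rather than just $L^1$, and your uniqueness argument via testing against $\varphi_g$ is cleaner than the paper's terse claim that the difference of two solutions lies in $H^1_0$.
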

\begin{proof}[Proof of Proposition \ref{Pr:Stampacchia}]
\underline{Approximation of \eqref{Eq:Za}:} 
We follow a standard \cite[Lemma 3.4]{Ponce} approximation scheme: for every $\mu \in \mathcal M(\Dsf)$ we consider a sequence $\{\mu_k\}_{k\in \VN}$ of $ C^\infty$ functions  that converges weakly (in the sense of measures) to $\mu$ and such that
\begin{equation}\label{Eq:Ze}\lim_{k\to \infty}\Vert \mu_k\Vert_{L^1(\Dsf)}=\Vert \mu\Vert_{\mathcal M(\Dsf)}.\end{equation} We consider the system \eqref{Eq:MainLinearisedSemiLinear} with $\mu$ replaced with $\mu_k$:
\begin{cases2}{Eq:PrK}
-\Delta v_k+\Psi  v_k&=\mu_k&&\quad\text{ in }\Dsf,
\\ v_k&=0&&\quad\text{ on } \partial\Dsf.
\end{cases2}
The existence of a solution $v_k$ to \eqref{Eq:PrK} follows from the minimisation of the energy functional 
\[ \mathscr E_k:H^{1}_0(\Dsf)\ni u\mapsto \frac12\int_\Dsf |\nabla u|^2\;dx+\frac12\int_\Dsf \Psi  u^2\;dx-\int_\Dsf \mu_k u\;dx.\]
To check that $\mathscr E_k$ is indeed coercive, we use the fact that $\lambda_1(\Psi)>0$ to obtain: for every $u\in H^{1}_0(\Dsf)$, 
\[\mathscr E_k(u)\geq \frac{\lambda_1(\Psi)}2\int_\Dsf u^2\;dx-\int_\Dsf \mu_ku\;dx.\] 

Regarding the uniqueness, observe that if there are two different solutions $(v_k,v_k')$  of \eqref{Eq:PrK} then the difference $z_k:=v_k-v_k'$ and satisfies
\begin{cases2}{Eq:Zk}
-\Delta z_k+\Psi  z_k&=0&&\quad\text{ in }\Dsf, 
\\ z_k&=0&&\quad\text{ on }\partial\Dsf.
\end{cases2}
Multiplying \eqref{Eq:Zk} by $z_k$ and integrating by parts we obtain 
\[ 
    \int_\Dsf |\nabla z_k|^2\;dx+\int_\Dsf \Psi  z_k^2\;dx=0.
\] 
Since $\lambda_1(\Psi)>0$, this is implies $z_k=0$, which means $v_k=v_k'$ and hence shows uniqueness.

\underline{Regularity estimates on the approximated problem:}
In order to derive $W^{1,p}$-estimates on the sequence $\{v_k\}_{k\in \VN}$, we begin with an \emph{a priori} $L^1$-estimate on $\{v_k\}_{k\in \VN}$; it will then suffice to apply the classical regularity result \cite[Proposition 4.1]{Ponce}. Consider, for every  function $h\in L^\infty(\Dsf)$, the solution $\theta_h$ of 
\begin{cases2}{Eq:Clap}
-\Delta \theta_h+\Psi\theta_h&=h&&\quad\text{ in }\Dsf, 
\\ \theta_h&=0&&\quad\text{ on }\partial\Dsf.
\end{cases2}
The existence and uniqueness of a solution to \eqref{Eq:Clap} follows from the same energy argument already used to obtain existence and uniqueness for $v_k$. We claim that for every $p\in [2,\infty)$,  there exists a constant $C_p$ such that 
\begin{equation}\label{Eq:Mila} 
    \Vert \theta_h\Vert_{W^{2,p}(\Dsf)}\leq C_p\Vert h\Vert_{L^p(\Dsf)}.
\end{equation}
\eqref{Eq:Mila} follows from a standard bootstrap argument which we just show the initialisation of. Using $\theta_h$ as a test function in the weak formulation of \eqref{Eq:Clap} we obtain 
\[
\int_\Dsf \theta_h^2\;dx\leq \frac1{\lambda_1(\Psi)}\int_\Dsf h^2\;dx,
\] 
whence elliptic regularity guarantees $\Vert \theta_h\Vert_{W^{2,2}(\Dsf)}\leq C \Vert h\Vert_{L^2(\Dsf)}$. This implies 
\eqref{Eq:Mila} for $p=2$. Now using the Sobolev embedding $W^{2,2}(\Dsf)\hookrightarrow L^p(\Dsf)$ ($p>2$) and writing  $-\Delta \theta_h = h-\Psi \theta_h\in L^p(\Dsf)$, $p>2$, we conclude again by elliptic regularity that $\theta_h\in W^{2,p}(\Dsf)$ and 
\[
    \|\theta_h\|_{W^{2,p}(\Dsf)}\le C(\|\theta_h\|_{L^p(\Dsf)}+ \|h\|_{L^p(\Dsf)}) \le C(\|\theta_h\|_{W^{2,2}(\Dsf)} + \|h\|_{L^p(\Dsf)}) \le C\|h\|_{L^p(\Dsf)},
\]
which is \eqref{Eq:Mila}.  Consequently, there exists a constant $C$, such that
\[ 
\Vert \theta_h\Vert_{L^\infty(\Dsf)}\leq C \Vert h\Vert_{L^\infty(\Dsf)}.
\]
Now, use $\theta_h$ as a test function in \eqref{Eq:PrK}. We obtain, integrating by parts twice,
\[ 
\left| \int_\Dsf v_k h\;dx\right|\leq \left|\int_\Dsf \theta_h \mu_k\;dx\right|\leq C \Vert \mu_k\Vert_{L^1(\Dsf)}\Vert h\Vert_{L^\infty(\Dsf)}.
\] 
By \eqref{Eq:Ze} we deduce that there exists a constant $C$ such that $ \sup_{k\in \VN}\Vert v_k\Vert_{L^1(\Dsf)}\leq C\Vert \mu\Vert_{\mathcal M(\Dsf)}. $ Observe now that we can rewrite the equation on $v_k$ as 
$ -\Delta v_k=\tilde \mu_k$ with 
$  \tilde \mu_k=\mu_k-\Psi v_k.$ As $\Psi\in L^\infty(\Dsf)$ we have, for a certain constant $C$,  that $\Vert \tilde\mu_k\Vert_{\mathcal M(\Dsf)}\leq C \Vert \mu\Vert_{\mathcal M(\Dsf)}$. Consequently, from \cite[Proposition~4.1]{Ponce} we know that, for every $q\in [1,\frac{d}{d-1})$, there exists a constant $c_q$ such that, for every $k\in \VN$, 
\[ 
\Vert v_k\Vert_{W^{1,q}(\Dsf)}\leq c_q \Vert \mu\Vert_{\mathcal M(\Dsf)}.
\]

We can thus extract a $W^{1,q}(\Dsf)$-weak, $L^q(\Dsf)$-strong, converging subsequence of $\{v_k\}_{k\in \VN}$. Let $U$ be the closure point under consideration. For every $p>d$, Sobolev embeddings imply that $W^{1,p}(\Dsf)\hookrightarrow C^0(\overline\Dsf)$ . Let $\theta\in W^{1,p}(\Dsf)$. Passing to the limit in the identity
\[ 
\int_\Dsf \nabla v_k\cdot\nabla \theta\;dx+\int_\Dsf \Psi  v_k \theta\;dx=\int_\Dsf \theta \mu_k\;dx
\]
it appears that $U$ solves \eqref{Eq:MainLinearisedSemiLinear} in the weak $W^{1,q}$-sense, and further satisfies the required regularity estimate. The existence of a solution is thus established.

\underline{Uniqueness of a solution to \eqref{Eq:MainLinearisedSemiLinear}:}
Assume that $u_1,u_1$ are two distinct solutions of \eqref{Eq:MainLinearisedSemiLinear}. Then $z:=u_1-u_2$ satisfies
\begin{cases2}{eq:z_unique}
\mathcal L_\Om z&=0&&\quad\text{ in }\Dsf,
\\ z&=0&&\quad\text{ on } \partial\Dsf.
\end{cases2}
It is then clear that $z$ solves \eqref{eq:z_unique} in the weak $H^{1}_0$-sense. However, we already proved (see above the proof of $z_k\equiv 0$) that this implies $z=0$ and thus the uniqueness. Thus the solution $u$ is necessarily unique.
\end{proof}

In this section we gather the proofs of Proposition \ref{Pr:LinearisedSemiLinear}, Theorem \ref{Th:MainSemiLinear}, Theorem \ref{Th:Main2SemiLinear}, Theorem \ref{thm:main_rates_rhs} and Corollary \ref{cor:symmetric_inclusion}.
\subsection{Proof of Proposition \ref{Pr:LinearisedSemiLinear}}
It suffices to prove that the potential 
\begin{equation}\label{Eq:WOm}W_\Om:=\frac{\partial \rho_\Om}{\partial u}(x,u_\Om)
\end{equation} 
is such that the assumptions of Proposition \ref{Pr:Stampacchia} are satisfied. Given that Assumption \eqref{Hyp:g} is satisfied, we know that 
\begin{equation}\label{Eq:WOmPo} 
W_\Om\geq 0.
\end{equation} 
Furthermore, by standard elliptic regularity, $u_\Om\in L^\infty(\Dsf)$, whence we conclude $W_\Om\in L^\infty(\Dsf).$ Consequently, the first eigenvalue $\lambda_1(W_\Om)$ (with the notations of Proposition \ref{Pr:Stampacchia}) is bounded from below:
\[ 
    \lambda_1(W_\Om)\geq \inf_{\substack{u\in H^{1}_0(\Dsf) \\ u \ne 0}} \frac{\int_\Dsf |\nabla u|^2\;dx}{\int_\Dsf u^2\;dx} > 0,
\] 
where the infimum on the right hand side is the first Dirichlet eigenvalue of the domain $\Dsf$. It suffices to use Proposition \ref{Pr:Stampacchia} to obtain the conclusion.

\subsection{Proof of Theorem \ref{Th:MainSemiLinear}}
The computations are similar whether we take $E\subset \Om$ or $E\subset \Dsf\backslash\overline \Om$. Thus, for notational simplicity, we consider the case $E\subset \Dsf\backslash \overline\Om$.
The function $U_\eps$ solves: $U_\eps =0 $ on $\partial \Dsf$ and in a weak $W^{1,q}_0(\Dsf)$ sense:
\[
    -\Delta U_\eps+\chi_\Om\frac{(g_1(u_\eps)-g_1(u_0)-g_2(u_\eps)+g_2(u_0))}{|E_\eps|}+\mu_\eps(g_1(u_\eps)-g_2(u_\eps))+\frac{g_2(u_\eps)-g_2(u_0)}{|E_\eps|}=(f_1-f_2)\mu_\eps.
\]
Now observe that, as $\chi_{\Om_\eps(E)}\underset{\eps\searrow 0}\to \chi_\Om$ in $L^2(\Dsf)$, elliptic regularity estimates entail 
\begin{equation}\label{Eq:Cv}
u_\eps\underset{\eps\searrow0}\to u_0\text{ in } C^0(\overline\Dsf).
\end{equation} 
By the mean value theorem, for $i=1,2$, there exists $v_{\eps,i}\in (u_0;u_\eps)$ or $(u_\eps;u_0)$ such that
\[ 
g_i(u_\eps)-g_i(u_0)=-g_i'(v_{\eps,i})(u_\eps-u_0)\quad (i=1,2).
\] From \eqref{Eq:Cv} we have $v_{\eps,i}\underset{\eps\searrow 0}\to u_0$ in $C^0(\overline\Dsf)$. This allows to rewrite the equation on $U_\eps$ as 
\begin{equation}\label{eq:U_eps_proof}
    -\Delta U_\eps+\chi_\Om{g_1'(v_{\eps,1})U_\eps+\chi_{\Dsf\backslash\Om}g_2'(v_{\eps,2})}U_\eps=-\mu_\eps(g_1(u_\eps)-g_2(u_\eps))+(f_1-f_2)\mu_\eps \quad \text{ in } \Dsf.
\end{equation}
From the same regularity estimates derived in the proof of Proposition \ref{Pr:LinearisedSemiLinear} we deduce that, for every $q\in [1,\frac{d}{d-1})$ and $\delta >0$ small,
\[ 
    \sup_{\eps\in (0,\delta] }\Vert U_\eps\Vert_{W^{1,q}(\Dsf)}<\infty.
\] 
We may thus pass to the weak $W^{1,q}$, strong $L^q$ limit in the equation of $U_\eps$ to obtain that every  accumulation  point of this sequence is a weak $W^{1,q}$-solution to \eqref{Eq:UeSemiLinear}. Since the uniqueness of a solution to this equation was established in Proposition \ref{Pr:LinearisedSemiLinear} the conclusion follows, if we can prove that the convergence is, in fact, strong in $W^{1,q}(\Dsf)$. Here, we use  \cite[Assertion (21)]{Boccardo} (see also \cite[Proposition 4.9]{Ponce}). Rewrite the equation \eqref{eq:U_eps_proof} on $U_\eps$ as 
\[ 
-\Delta U_\eps=(f_1-f_2-g_1(u_\eps)+g_2(u_\eps))\mu_\eps-(\chi_\Om{g_1'(v_{\eps,1})+\chi_{\Dsf\backslash\Om}g_2'(v_{\eps,2})})U_\eps=\tilde \mu_\eps.
\] 
From the strong convergence of $U_\eps$ in $L^1(\Dsf)$, we deduce from  \cite[Assertion (21)]{Boccardo} that $\{U_\eps\}_{\eps\in (0,\delta]}$ is a sequentially compact family in $W^{1,q}_0(\Dsf)$ and thus converges strongly. The proof of the proposition is complete.

\subsection{Proof of Theorem \ref{Th:Main2SemiLinear}} It will be convenient to observe that when $E=\{x_0\}$ the function $U_0$ solves  the equation  
\begin{cases2}{Eq:Ux0}
-\Delta U_0+W_\Om U_0&=\left(\mathrm{sgn}_\Om(\{x_0\}) F(x_0)\right)\delta_{x_0}&&\quad\text{ in }\Dsf, 
\\ U_0&=0&&\quad\text{ on }\partial \Dsf,
\end{cases2} 
in a weak $W^{1,q}_0(\Dsf)$-sense. The function $F$ in \eqref{Eq:Ux0} is defined as 
\[
F=g_2(u_\Om)-g_1(u_\Om)+f_1-f_2\in L^\infty(\Dsf),
\] 
and the potential $W_\Om$ is defined in \eqref{Eq:WOm}. Introduce the Green kernel $G=G_\Om=G_\Om(x,y)$ of the operator $-\Delta +W_\Om$, that is, the unique solution of 
\begin{cases2}{Eq:GreenKernel}
    -\Delta_x G(y,x)+W_\Om G &=\delta_{x=y}&&\quad\text{ in }\Dsf, 
\\G(y,x)&=0&&\quad\text{ on }\partial \Dsf.
\end{cases2}  
A detailed study of the Green kernel of operators $L$ having the form $-\sum_i\partial_i(\sum_j a_{i,j}\partial_j)$ can be found in the seminal work \cite{Littman}. Here, the existence of a Green kernel follows from Proposition \ref{Pr:Stampacchia}, and the Green kernel is symmetric in the sense that $G(x,y)=G(y,x)$ for all for all $x\neq y\in \Dsf$. Furthermore, from Proposition~\ref{Pr:Stampacchia} and Sobolev embeddings we know that, 
\begin{equation}\label{Eq:GKL2}
\forall y\in \Dsf\,, G(y,\cdot)\in L^2(\Dsf).
\end{equation}
Finally, it is clear that for all $y\in \Dsf\,, U_{\{y\},0}=\mathrm{sgn}_\Om(\{y\})F(y) G(y,\cdot)$ and consequently, 
for every $h\in L^2(\Dsf)$, the function 
\[
\tilde u= x\mapsto \int_\Dsf \mathrm{sgn}_\Om(\{y\}) U_{\{y\},0}(x)h(y)dy=\int_\Dsf F(y) G(x,y)h(y)dy,
\] 
is well-defined. It suffices to differentiate it to obtain that $\tilde u$ solves \eqref{Eq:Udot}, thereby concluding the proof of Theorem \ref{Th:Main2SemiLinear}.

\subsection{Proof of Theorem~\ref{thm:main_rates_rhs}}

\begin{proof}
    To establish (i) for $d=2$ we have $K(x) = R(x) + O(|x|^{-1})$ as $|x|\to\infty$ and thus for $x\ne x_0$:
    \begin{equation}
        K(T_\eps^{-1}(x)) = R(T_\eps^{-1}(x)) + O(|T_\eps^{-1}(x)|^{-1}).
    \end{equation}
    Since $R(x) = b\ln(|x|)$, it follows $R(T_\eps^{-1}(x)) = b\ln(x-x_0) - b\ln(\eps) = R(x-x_0) - b\ln(\eps)$ and thus we conclude for $\eps \searrow 0$:
    \begin{equation}
        \frac{1}{|\omega_\eps|} \eps^2(K(T_\eps^{-1}(x)) + v(x) + \ln(\eps)b) =  |\omega|^{-1}( b\ln(|x-x_0|) + v(x)) + O(|T_\eps^{-1}(x)|^{-1}),
    \end{equation}
    in view of $|T_\eps^{-1}(x)|^{-1} = \eps |x-x_0|^{-1}$ the result follows. The proof of \eqref{eq:pointwise_limit_rhs_dim3} is established the same way and left to the reader.

For the sake of simplicity we only give a proof for item (iii), that is we restrict ourselves to dimension $d\ge3$. Note that the same arguments can be used to show the according results of item (ii).
We start by proving the last item of (iii). Therefore, first note that we have
\[U_\eps - |\omega|^{-1}( \eps^{2-d}K\circ T_\eps^{-1} + v)=|\omega|^{-1}\eps^{2-d}\left(K_\eps\circ T_\eps^{-1}-K\circ T_\eps^{-1}-\eps^{d-2}v\right).\]
Thus, from \eqref{eq:main_3} and changing variables, we obtain
\begin{equation}\label{eq:H1_est_interior}
    \|U_\eps - |\omega|^{-1}( \eps^{2-d}K\circ T_\eps^{-1} + v)\|_{H^1(\Dsf)} \le C\eps.
\end{equation}
Now we estimate for $p\in (1,\frac{d}{d-1})$ by the triangle inequality and H\"older's inequality:
\begin{align}
    \begin{split}
        \|\nabla (U_\eps - |\omega|^{-1}(R(x-x_0)+v))\|_{L^p(\Dsf)^d}  \le &  C\|\nabla (U_\eps - |\omega|^{-1}(\eps^{2-d}K\circ T_\eps^{-1}+v))\|_{L^2(\Dsf)^d} \\
                                                                         & + C|\omega|^{-1}\|\nabla (\eps^{2-d}K\circ T_\eps^{-1} -R(x-x_0))\|_{L^p(\Dsf)^d}.
\end{split}
\end{align}
Using the estimate \eqref{eq:H1_est_interior}, we see that the first term on the right hand side is bounded by $C\eps$.  For the second term we note
 $\nabla R(\eps x) = \eps^{1-d}\nabla R(x)$ and thus
\begin{align}
    \int_\Dsf |\nabla (\eps^{2-d}K\circ T_\eps^{-1} - R(x-x_0))|^p \;dx & = \int_\Dsf |\eps^{1-d} \nabla K(T_\eps^{-1}(x)) - \nabla R(x-x_0))^p \;dx \\
                                                       & = \int_{\Dsf_\eps}\eps^d |\eps^{1-d}K(x) - \nabla R(\eps x)|^p \;dx \\
                                                       & = \int_{\Dsf_\eps}\eps^{d-p(d-1)} |K(x) - \nabla R(x)|^p \;dx,
\end{align}
and 
\begin{equation}
    \|\nabla (K\circ T_\eps^{-1} -R(x-x_0))\|_{L^p(\Dsf)^d}^p = \eps^{d-p(d-1)} \|\nabla(K-R)\|_{L^p(\Dsf_\eps)^d}^p.
\end{equation}
Now according to Lemma \ref{lem:decay_KR_integrability}, we have 
\begin{equation}
    \|\nabla(K-R)\|_{L^p(\Dsf_\eps)^d} \le \|\nabla(K-R)\|_{L^p(\VR^d)^d}<\infty \quad \text{ for } p \in \left(1,\frac{d}{d-1}\right). 
\end{equation}
Therefore we obtain for $p\in (1,\frac{d}{d-1})$ noticing that $0<p^{-1}(d-p(d-1))<1$:
\begin{equation}
    \|\nabla (U_\eps - |\omega|^{-1}(R(x-x_0)+v))\|_{L^p(\Dsf)^d} \le C\eps + C\eps^{\frac{d-p(d-1)}{p}} \le C\eps^{\frac{d-p(d-1)}{p}}.
\end{equation}
Since by \eqref{eq:vk}, $v(x)=-R(x-x_0)$ and $u_\eps(x)=u_0(x)=0$ for $x\in \partial \Dsf$, we have 
\[
    U_\eps - |\omega|^{-1}(R(x-x_0)+v) = |\omega|^{-1}(R(x-x_0)-R(x-x_0)) =0 \quad \text{ on } \partial \Dsf.
\]
Therefore the Poincar\'e inequality yields
\[
C\|U_\eps - |\omega|^{-1}(R(x-x_0)+v)\|_{L^p(\Dsf)} \le  \|\nabla (U_\eps - |\omega|^{-1}(R(x-x_0)+v))\|_{L^p(\Dsf)^d},
\]
and hence the last item of (iii) follows.

We now prove the first item in (iii).  We compute for $d\ge 3$ and $p\in (\frac{d}{d-1},\frac{d}{d-2})$, using the  continuous embedding $H^1(\Dsf) \stackrel{c}{\hookrightarrow} L^p(\Dsf)$  for $p\in [1,\frac{d}{d-2})$:
\begin{align}\label{eq:rhs_estimate_UepsLp3}
    \begin{split}
        \|U_\eps - |\omega|^{-1}(R(x-x_0)+v)\|_{L^p(\Dsf)}  \le &  C\|U_\eps - |\omega|^{-1}(\eps^{2-d}K\circ T_\eps^{-1}+v)\|_{L^p(\Dsf)} \\
                                                                & + C|\omega|^{-1}\|\eps^{2-d}K\circ T_\eps^{-1}-R(x-x_0)\|_{L^p(\Dsf)}\\
    \le &  C\|U_\eps - |\omega|^{-1}(K\circ T_\eps^{-1}+v)\|_{H^1(\Dsf)} \\
        & + C|\omega|^{-1}\|\eps^{2-d}K\circ T_\eps^{-1}-R(x-x_0)\|_{L^p(\Dsf)}.
\end{split}
\end{align}
Moreover, we have by changing variables:
\begin{equation}
    \|\eps^{2-d}K\circ T_\eps^{-1}-R(x-x_0)\|_{L^p(\Dsf)} = \eps^{\frac{d-p(d-2)}{p}}\|K-R\|_{L^p(\Dsf_\eps)},
\end{equation}
and according to Lemma \ref{lem:decay_KR_integrability}
\begin{equation}
    \|K-R\|_{L^p(\Dsf_\eps)}\le \|K-R\|_{L^p(\VR^d)}<\infty \quad \text{ for }\quad p\in \left(\frac{d}{d-1},\frac{d}{d-2}\right).
\end{equation}
Therefore from \eqref{eq:rhs_estimate_UepsLp3} and \eqref{eq:H1_est_interior}, we have for $p\in \left(\frac{d}{d-1},\frac{d}{d-2}\right)$
\begin{equation}
  \|U_\eps - |\omega|^{-1}(R(x-x_0)+v)\|_{L^p(\Dsf)}  \le C\eps + C\eps^{\frac{d-p(d-2)}{p}}\le C\eps^{\frac{d-p(d-2)}{p}}.
\end{equation}

\end{proof}

\subsection{Proof of Corollary~\ref{cor:symmetric_inclusion}}

\begin{proof}
    This is a direct consequence of Theorem~\ref{thm:main_rates_rhs} having a close inspection of the proof of Theorem~\ref{thm:main_rates_rhs}, item(ii). Indeed, since $\omega=B_1(0)$, we have according to \cite{a_BAGAST_2021b} a.e. in $\Dsf$:
\begin{equation}
    U_\eps - |\omega|^{-1}(K\circ T_\eps^{-1}+v +b\ln(\eps))=0 \quad \text{ for } d=2,
\end{equation}
and 
\begin{equation}\label{eq:exact_3D}
    U_\eps - |\omega|^{-1}(\eps^{2-d}K\circ T_\eps^{-1}+v)=0 \quad \text{ for } d\ge 3.
\end{equation}
Moreover, $K=R$ on $\VR^d\setminus\overline{B_1(0)}$, that is, the asymptotics of $K$ aborts after the first term. Therefore for $d\ge 3$ it follows from \eqref{eq:exact_3D}:
\begin{align}
    \|U_\eps - |\omega|^{-1}(R(x-x_0)+v)\|_{L^p(\Dsf)} \le & C|\omega|^{-1}\|\eps^{2-d}K\circ T_\eps^{-1}-R(x-x_0)\|_{L^p(B_\eps(0))}.
\end{align}
Now changing variables $T_\eps(x)=y$ and $R(T_\eps(x)-x_0)=\eps^{-1}R(x)$ shows that 
\begin{equation}
\|\eps^{2-d}K\circ T_\eps^{-1}-R(x-x_0)\|_{L^p(B_\eps(0))} = \eps^{\frac{d-p(d-2)}{p}}\|K-R\|_{L_p(B_1(0))} 
\end{equation}
the last term is finite for $p\in [1,\frac{d}{d-1})$ since $R(x)=E(x)=|\omega|c_d|x|^{-(d-2)}\in L_p(B_1(0))$ for such $p$ (recall $E$ was defined in \eqref{eq:fundamental_solution}). 
The case $d=2$ is treated in the same fashion noting that for $d=2$ we have $R(x)=-|\omega|(2\pi)^{-1}\ln(|x|)\in L_p(B_1(0))$ for all $p\ge 1$.
\end{proof}

\section{Proofs for the transmission problem}
\subsection{Proof of Proposition \ref{Pr:LinearisedTransmission}}

\underline{Existence of a very weak solution}
To establish the existence of a solution, it suffices to observe that the linear map 
\[
T:L^p(\Dsf)\ni v\mapsto \zeta \cdot \nabla \varphi_v(x_0) 
\] is continuous for every $p>d$, as 
 $\Vert \varphi_v\Vert_{ C^1(\overline\Omega\cup(\Dsf\setminus\overline\Omega))}\leq C \Vert v\Vert_{L^p(\Dsf)}$ by elliptic regularity.  By the Riesz representation theorem, there exists a unique $\varphi_{\zeta,x_0}\in L^{p'}(\Dsf)$, such that
\begin{equation}\label{eq:very_weak_equation}
    \int_\Dsf \varphi_{\zeta,x_0}v  = T(v)  \quad \text{ for every } v\in L^p(\Dsf).
\end{equation}
Therefore \eqref{eq:weak2} admits a unique solution $u\in L^{p'}(\Dsf)$, which further satisfies the required regularity estimates.

\subsection{Proof of Theorem~\ref{Th:MainTransmission}}
\begin{proof}[Proof of Theorem~\ref{Th:MainTransmission}]
Let $u_\eps:= u_{\Omega_\eps(x_0,\omega)}$, $u_0:=u_\Om$ and  $U_\eps = \frac{u_\eps-u_0}{|\omega_\eps|}$. 
Then we obtain 
\begin{equation}
    \int_\Dsf \beta_\Om \nabla (u_\eps - u_0)\cdot \nabla \varphi \;dx = \mathrm{sgn}_\Om(\{x_0\}) (\beta_2-\beta_1)\left(\int_{\omega_\eps} \nabla (u_\eps-u_0) \cdot \nabla \varphi \;dx + \int_{\omega_\eps} \nabla u_0(x)\cdot \nabla \varphi \;dx \right),
\end{equation}
 for all $\varphi\in H^1_0(\Dsf)$. 
Dividing by $|\omega_\eps| = |\omega|\eps^d$ and using $K_\eps = \frac{(u_\eps-u_0)\circ T_\eps}{\eps}$, this can be written as 
\begin{align}
    \int_\Dsf \beta_\Om \nabla U_\eps \cdot \nabla \varphi \;dx =& \mathrm{sgn}_\Om(\{x_0\}) (\beta_2-\beta_1)\frac{1}{|\omega_\eps|}\int_{\omega_\eps} \nabla K_\eps\circ T_\eps^{-1} \cdot \nabla \varphi(x) \;dx\\ &+ \mathrm{sgn}_\Om(\{x_0\}) (\beta_2-\beta_1)\frac{1}{|\omega_\eps|}\int_{\omega_\eps} \nabla u_0(x)\cdot \nabla \varphi(x) \;dx,
\end{align}
for all $\varphi\in H^1_0(\Dsf)$. Now choosing $\varphi = \varphi_v$ (with $\varphi_v$ defined in \eqref{def:varphiv} for $v\in L^p(\Dsf)$, $p>d$ and $q:=p'=q/(q-1)$) as a test function and integrating by parts in the first integral using $-\Div(\beta_\Omega \nabla \varphi_v) = v$ yields the very weak formulation:
\begin{align}
    \int_\Dsf  U_\eps v \;dx =&  \mathrm{sgn}_\Om(\{x_0\}) (\beta_2-\beta_1)\frac{1}{|\omega_\eps|}\int_{\omega_\eps} \nabla K_\eps\circ T_\eps^{-1} \cdot \nabla \varphi_v(x) \;dx\\ &+ \mathrm{sgn}_\Om(\{x_0\}) (\beta_2-\beta_1)\frac{1}{|\omega_\eps|}\int_{\omega_\eps} \nabla u_0(x)\cdot \nabla \varphi_v(x) \;dx,
\end{align}
for all $v\in L^p(\Dsf)$.
Subtracting the limit equation for $U_0$ yields
\begin{align}
    \int_\Dsf  (U_\eps-U_0) v \;dx =&  \mathrm{sgn}_\Om(\{x_0\}) (\beta_2-\beta_1)\frac{1}{|\omega_\eps|}\int_{\omega_\eps} (\nabla K_\eps\circ T_\eps^{-1} - \nabla K\circ T_\eps^{-1})\cdot \nabla \varphi_v(x) \;dx\\ 
                                    &+ \mathrm{sgn}_\Om(\{x_0\}) (\beta_2-\beta_1)\frac{1}{|\omega_\eps|}\int_{\omega_\eps} \nabla K\circ T_\eps^{-1} \cdot (\nabla \varphi_v(x)-\nabla \varphi_v(x_0)) \;dx\\
                                    &+ \mathrm{sgn}_\Om(\{x_0\}) (\beta_2-\beta_1)\frac{1}{|\omega_\eps|}\int_{\omega_\eps} (\nabla u_0(x) - \nabla u_0(x_0))\cdot \nabla \varphi_v(x) \;dx \\
                                    &+ \mathrm{sgn}_\Om(\{x_0\}) (\beta_2-\beta_1)\frac{1}{|\omega_\eps|}\int_{\omega_\eps} \nabla u_0(x)\cdot (\nabla \varphi_v(x)-\nabla \varphi_v(x_0)) \;dx \\
                                    & =: I_1(v)+I_2(v)+I_3(v)+I_4(v).
\end{align}
It is readily checked that using $\|\varphi_v\|_{C^1(\Omega\cup\Dsf\setminus\overline\Omega)}\le \|v\|_{L_p(\Dsf)}$:
\begin{equation}
    |I_1(v)|\le C\|\nabla K_\eps-\nabla K\|_{L_1(\omega)}\|\nabla \varphi_v\|_{C^0(\overline{\omega_\eps})^d} \le C\|\nabla K_\eps-\nabla K\|_{L_1(\omega)^d}\|v\|_{L^p(\Dsf)} 
\end{equation}
and
\begin{equation}
    |I_3(v)| \le C|\omega_\eps|^{-1}\|\nabla u_0-\nabla u_0(x_0)\|_{L^1(\omega_\eps)^d}\|v\|_{L^p(\Dsf)}. 
\end{equation}
Now we recall the following equation which follows from the proof of Morrey's inequality \cite[p.280, Theorem~4]{b_EV_2010a}: for all $\varphi \in W^{2,p}(\Omega\cup \Dsf\setminus\overline\Omega)$, $p>d$: 
\begin{equation}
    \frac{1}{|B_\eps(x_0)|}\int_{B_\eps(x_0)}|\nabla \varphi(x)-\nabla \varphi(x_0)|\;dx \le C\int_{B_\eps(x_0)}\frac{|\nabla^2\varphi(x)|}{|x-x_0|^{d-1}}\;dx.
\end{equation}
Now there are constants $\varrho>1$ and $C>0$, such that $|B_{\eps\varrho}(x_0)|\le C|\omega_{\eps}(x_0)|$  and $\omega_\eps(x_0)\subset B_{\eps\varrho}(x_0)$ for $\eps >0$. Therefore,
\begin{equation}
    \frac{1}{|\omega_\eps(x_0)|}\int_{\omega_\eps(x_0)}|\nabla \varphi(x)-\nabla \varphi(x_0)|\;dx \le C \frac{1}{|B_{\eps\varrho}(x_0)|}\int_{B_{\eps\varrho}(x_0)}|\nabla \varphi(x)-\nabla \varphi(x_0)|\;dx.
\end{equation}
Therefore it follows from H\"older's inequality:
\begin{equation}
    |I_2(v)|\le C\|\nabla K\|_{C^0(\overline\omega)^d}\int_{B_{\eps\varrho}(x_0)}\frac{|\nabla^2\varphi_v(x)|}{|x-x_0|^{d-1}}\;dx\le C\|\nabla K\|_{C^0(\overline\omega)^d}\|v\|_{L^p(\Dsf)} \left(\int_{B_{\eps\varrho}(x_0)}\frac{1}{|x-x_0|^{p'(d-1)}}\;dx\right)^{1/p'}.
\end{equation}
Changing variables yields
\begin{equation}
    \left(\int_{B_{\eps\varrho}(x_0)}\frac{1}{|x-x_0|^{p'(d-1)}}\;dx\right)^{1/p'} = (\varrho\eps)^{\frac{d-p'(d-1)}{p'}}\left(\int_{B_1(0)}\frac{1}{|x|^{p'(d-1)}}\;dx\right)^{1/p'}
\end{equation}
and the last integral is finite if $p'=q\in (1,\frac{d}{d-1})$, which is satisfied since $p>d$.
Similarly we can show that 
\begin{equation}
    |I_4(v)|\le C\eps^{\frac{d-p'(d-1)}{p'}}\|v\|_{L^p(\Dsf)}.
\end{equation}
Summarising we have shown that (recall $p'=q$)
\begin{equation}
    \|U_\eps-U_0\|_{L^q(\Dsf)}\le C(\eps^{\frac{d-q(d-1)}{q}} + \|\nabla K_\eps-\nabla K\|_{L_1(\omega)^d} + |\omega_\eps|^{-1}\|\nabla u_0-\nabla u_0(x_0)\|_{L^1(\omega_\eps)^d})
\end{equation}
and the in view of right differentiability of $\nabla u_0$ near $x_0$, $|\omega_\eps|^{-1}\|\nabla u_0-\nabla u_0(x_0)\|_{L^1(\omega_\eps)^d}\le C\eps$ and the estimate \eqref{eq:trans_estimate}. Moreover, the right hand side goes to zero as $\eps\searrow 0$ in view of Lemma~\ref{lem:convergence_Keps_transmission}.
\end{proof}%

\subsection{Proof of Theorem~\ref{thm:mainThmTransmissionAsymptotic}}
\begin{proof}[Proof of Theorem~\ref{thm:mainThmTransmissionAsymptotic}]
   We first note that multiplying \eqref{eq:asymp_exp_trans_fixed} with $\eps$ gives
\begin{equation}
    \|u_\eps - u_0 -  \eps K\circ T_\eps^{-1} - \eps^dv\|_{W^{1,q}(\Dsf)} \le  C\eps^{1+\frac{d}{q}},
\end{equation} 
and this yields by division by $|\omega_\eps|$ with the definition $u_\eps:= u_{\Omega_\eps(x_0,\omega)}$  and  $U_\eps = \frac{u_\eps-u_0}{|\omega_\eps|}$ the estimate
\begin{equation}\label{eq:estimate_Ueps_D}
    \|U_\eps -  \eps |\omega_\eps|^{-1}K\circ T_\eps^{-1} - |\omega|^{-1}v\|_{W^{1,q}(\Dsf)} \le  C\eps^{1+\frac{d}{q}-d},
\end{equation}
where we note that the exponent $1+\frac{d}{q}-d>0$ for $q\in (1,\frac{d}{d-1})$.  Now we estimate 
\begin{equation}\label{eq:esitmate_Ueps_trans_layer}
    \begin{split}
        \|U_\eps - |\omega|^{-1}(R + v) \|_{L^q(\Dsf)}   \le &  \|U_\eps - \eps |\omega_\eps|^{-1}K\circ T_\eps^{-1} - |\omega|^{-1}v\|_{L^q(\Dsf)} \\
                                                                   & +  \|\eps |\omega_\eps|^{-1}K\circ T_\eps^{-1} - |\omega|^{-1}R\|_{L^q(\Dsf)}.
\end{split}
\end{equation}
The first term on the right hand side is bounded in view of \eqref{eq:estimate_Ueps_D}. To treat the second term on the right hand side of \eqref{eq:esitmate_Ueps_trans_layer}, we 
change variables, recall $\Dsf_\eps = T_\eps^{-1}(\Dsf)$, and $R\circ T_\eps = \eps^{-(d-1)} R$ to obtain
\begin{equation}
    \|\eps |\omega_\eps|^{-1}K\circ T_\eps^{-1} - |\omega|^{-1}R\|_{L^q(\Dsf)}^q = \eps^{d-q(d-1)} |\omega|^{-q}\|K- R\|_{L^q(\Dsf_\eps)}^q.
\end{equation}
Now we recall $R(x) = \frac{Ax}{|x|^d}, K \in L^q(\omega)$ for $q\in [1,\frac{d}{d-1})$. 
In addition, since 
$K(x)-R(x)$ behaves as $|x|^{-d}$ for $|x|\to\infty$, we also have (similarly to Lemma~\ref{lem:decay_KR_integrability}) that $\|K-R\|_{L^q(\VR^d\setminus \omega)}$ is bounded. It follows that 
\begin{equation}
    \|\eps |\omega_\eps|^{-1}K\circ T_\eps^{-1} - |\omega|^{-1}R\|_{L^q(\Dsf)}^q \le  \eps^{d-q(d-1)} |\omega|^{-q}\|K- R\|_{L^q(\VR^d)}^q.
\end{equation}
Finally noting that $\frac{d-q(d-1)}{q}<1$ is equivalent to $q>1$ this finishes the proof.

\end{proof}

\section*{Appendix}

\paragraph{Proof of Lemma \ref{lma:expansion_rhs}}
\begin{proof}
We start with $d=2$ and aim to derive an equation for $V_\eps:=K_\eps - K - v\circ T_\eps-\ln(\eps)b$.
Subtracting the weak formulation from the perturbed state equation \eqref{eq:perturbed_state_rhs} for $\eps>0$ and $\eps=0$ yields
\begin{equation*}
\int_\Dsf\nabla (u_\eps-u_0)\cdot\nabla\varphi\;dx=\int_{\omega_\eps}(f_1-f_2)\varphi\;dx,\quad\text{ for all }\varphi\in H^1_0(\Dsf).
\end{equation*}
Now the change of variables $T_\eps(x)=y$, $\Dsf_\eps=T_\eps^{-1}(\Dsf)$,   and dividing by $\eps^2$ shows
\begin{equation*}
\int_{\Dsf_\eps}\nabla K_\eps\cdot\nabla\varphi\;dx=\int_{\omega}(f_1-f_2)\varphi\;dx,\quad\text{ for all }\varphi\in H^1_0(\Dsf_\eps).
\end{equation*}
Note that, by extending $\varphi\in H^1_0(\Dsf_\eps)$ by 0 in the exterior, we get
\begin{equation*}
\int_{\Dsf_\eps}\nabla K\cdot\nabla\varphi\;dx=\int_{\omega}(f_1-f_2)\varphi\;dx,\quad\text{ for all }\varphi\in H^1_0(\Dsf_\eps).
\end{equation*}
Hence, we conclude
\begin{align*}
\int_{\Dsf_\eps}\nabla V_\eps\cdot\nabla\varphi\;dx=0 ,\quad\text{ for all }\varphi\in H^1_0(\Dsf_\eps),
\end{align*}
where we used that $v$ is harmonic. Furthermore, we see that for $x\in\partial\Dsf_\eps$ we have
\begin{equation*}
V_\eps(x)=-K(x)-v\circ T_\eps(x)-\ln(\eps) b=-K(x)+R(\eps x)-\ln(\eps)b=R(x)-K(x),
\end{equation*}
where we used \eqref{eq:asymptotic_K_rhs} and $b\in\VR$ is chosen such that $R(\eps)=b\ln(\eps)$. Thus we observe, for $\eps>0$ sufficiently small, there holds
\begin{equation*}
|V_\eps(x)|\le c|x|^{-1}+O(|x|^{-2}),\quad\text{ for all }x\in\partial\Dsf_\eps.
\end{equation*}
Finally, we can apply \cite[Lemma 3.4, Lemma 3.7]{a_BAST_2021a} to conclude
\begin{equation*}
\|V_\eps\|_\eps\le C \left(\eps^{\frac{1}{2}}\|V_\eps\|_{L^2(\partial \Dsf_\eps)}+\|V_\eps\|_{H^{\frac{1}{2}}(\partial \Dsf_\eps)}\right)\le C\eps.
\end{equation*}
The proof for dimension $d\ge3$ is similar. An identical computation shows that $V_\eps:=K_\eps - K - \eps^{d-2}v\circ T_\eps$ satisfies
\begin{align*}
\int_{\Dsf_\eps}\nabla V_\eps\cdot\nabla\varphi\;dx=0 ,\quad\text{ for all }\varphi\in H^1_0(\Dsf_\eps),
\end{align*}
and  one readily checks that for a.e. $x\in \partial\Dsf_\eps$ there holds
\begin{equation*}
V_\eps(x)=-K(x)-\eps^{d-2}v\circ T_\eps(x)=-K(x)+\eps^{d-2}R(\eps x)=R(x)-K(x),
\end{equation*}
where in the last equality we used that $R$ is homogenous of degree $-(d-2)$; see \eqref{eq:R_rhs}. Thus, an application of \cite[Lemma 3.4, Lemma 3.7]{a_BAST_2021a} yield
\begin{equation*}
\|V_\eps\|_\eps\le C \left(\eps^{\frac{1}{2}}\|V_\eps\|_{L^2(\partial \Dsf_\eps)}+\|V_\eps\|_{H^{\frac{1}{2}}(\partial \Dsf_\eps)}\right)\le C\eps^{\frac{d}{2}}.
\end{equation*}
\end{proof}

\paragraph{Proof of Lemma \ref{lem:convergence_Keps_transmission}}
\begin{proof}
First note that, testing with $\varphi\in H^1_0(\Dsf)$, we can rewrite \eqref{eq:equation_K} as
\begin{equation}
\int_{\Dsf_\eps}\beta_{\omega\cup T_\eps^{-1}(\Omega)}\nabla K\cdot\nabla\varphi\;dx=\mathrm{sgn}_\Om(\{x_0\})(\beta_2-\beta_1)\int_\omega\nabla u_0(x_0)\cdot\nabla\varphi\;dx+\mathrm{sgn}_\Om(\{x_0\})(\beta_2-\beta_1)\int_{T_\eps^{-1}(\partial\Omega)}\partial_\nu K\varphi\;dS.
\end{equation}
Hence, combined with the rescaled equations \eqref{strongtrans} and \eqref{eq:corr_v_transmission}, we see that $V_\eps:=K_\eps-K-\eps^{d-1}v\circ T_\eps$ satisfies
\begin{align}
\int_{\Dsf_\eps}\beta_{\omega\cup T_\eps^{-1}(\Omega)}\nabla V_\eps\cdot\nabla\varphi\;dx=&\mathrm{sgn}_\Om(\{x_0\})(\beta_2-\beta_1)\int_\omega\left(\nabla u_0\circ T_\eps-\nabla u_0(x_0)\right)\cdot\nabla\varphi\;dx\\
&+\mathrm{sgn}_\Om(\{x_0\})(\beta_2-\beta_1)\int_{T_\eps^{-1}(\partial\Omega)}\partial_\nu (K-R)\varphi\;dS=:F_\eps(\varphi).
\end{align}
and $V_\eps=K-R=:g_\eps$ on $\partial \Dsf_\eps$. From \cite[Lemma 3.7]{a_BAST_2021a} (in an $L^p$ setting) we deduce
\begin{equation}
\|V_\eps\|_{L^q(\Dsf_\eps)}+\|\nabla V_\eps\|_{L^q(\Dsf_\eps)^d}\le C\left( \|F_\eps\|_{L^q(\Dsf_\eps)}+\eps^{1-\frac{1}{q}}\|g_\eps\|_{L^q(\partial\Dsf_\eps)}+|g_\eps|_{W^{1-\frac{1}{q},q}(\partial\Dsf_\eps)}\right).
\end{equation}
In view of $|K(x)-R(x)|\approx|x|^{-d}$ for $|x|\to\infty$ and the scaling properties of Lemma \ref{lem:scaling_bnd_transmission} and Lemma \ref{lem:scaling_norms_q}, the result follows.
\end{proof}

\begin{lemma}\label{lem:decay_KR_integrability}
Let $\omega\subset\VR^d$ such that $0\in\omega$. Given a function $E:\VR^d\rightarrow\VR$ let $K(x):=\int_\omega E(x-y)\;dy$, for $x\in\VR^d$. Then we have the following properties:
For dimension $d=2$ and $E(x):=\ln(|x|)$ there holds
\begin{itemize}
\item[(i)] $\|K-|\omega|E\|_{L^p(\VR^2)}<\infty$ for $p\in(2,\infty)$.
\item[(ii)] $\|\nabla(K-|\omega|E)\|_{L^p(\VR^2)^2}<\infty$ for $p\in(1,2)$.
\end{itemize}
For dimension $d>2$ and $E(x):=|x|^{-k}$, $k\in\VN$ there holds
\begin{itemize}
\item[(iii)] $\|K-|\omega|E\|_{L^p(\VR^d)}<\infty$ for $p\in(\frac{d}{k+1},\frac{d}{k})$.
\item[(iv)] $\|\nabla(K-|\omega|E)\|_{L^p(\VR^d)^d}<\infty$ for $p\in(\frac{d}{k+2},\frac{d}{k+1})$.
\end{itemize}
\end{lemma}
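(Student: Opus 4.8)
The statement is about the decay of the ``smoothed out minus singular'' remainder $K - |\omega| E$, where $K(x) = \int_\omega E(x-y)\,dy$. The key observation is that for $|x|$ large (say $|x| \geq 2\,\mathrm{diam}(\omega)$ so that $x - y$ stays comparable to $x$ uniformly for $y \in \omega$), one can Taylor expand $E(x - y)$ around $x$. For $E$ homogeneous (or log) this gives
\[
K(x) - |\omega| E(x) = \int_\omega \big( E(x-y) - E(x) \big)\,dy = -\int_\omega \nabla E(x)\cdot y\, dy + O\!\left( \sup_{y\in\omega} |\nabla^2 E(x - \theta y)| \right),
\]
and after integrating out the linear term (which need not vanish unless $\omega$ is symmetric), one is left with a quantity of size $|\nabla E(x)|\cdot \mathrm{const} = O(|x|^{-(k+1)})$ in the case $E = |x|^{-k}$, and $O(|x|^{-1})$ in the case $d=2$, $E = \ln|x|$; similarly $\nabla(K - |\omega| E)(x) = \int_\omega(\nabla E(x-y) - \nabla E(x))\,dy = O(|\nabla^2 E(x)|) = O(|x|^{-(k+2)})$ and $O(|x|^{-2})$ respectively. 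Then $L^p$-integrability at infinity is read off from the standard fact that $|x|^{-\alpha} \in L^p(\{|x|\geq 1\})$ iff $p\alpha > d$. The behaviour near the origin is harmless: $K$ is a convolution of the locally integrable kernel $E$ with $\chi_\omega \in L^\infty_c$, hence $K$ (and, for $d \geq 2$, $\nabla K$, which is the convolution of $\nabla E \in L^1_{loc}$ with $\chi_\omega$) is locally bounded, and $E$, $\nabla E$ have only the usual mild singularities $|x|^{-k}$, $|x|^{-(k+1)}$ there, which are $L^p_{loc}$ precisely in the stated ranges on the lower end.

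\textbf{Step by step.} First I would fix $\omega$ and set $r_0 := 2\,\mathrm{diam}(\omega) + 2$; split $\VR^d = B_{r_0}(0) \cup (\VR^d \setminus B_{r_0}(0))$. On $B_{r_0}$: show $K - |\omega| E \in L^p(B_{r_0})$ and $\nabla(K - |\omega|E)\in L^p(B_{r_0})^d$ by observing $K = E * \chi_\omega$ is continuous (Young's inequality, $E \in L^1_{loc}$, $\chi_\omega \in L^\infty$ compact support) and similarly $\partial_i K = (\partial_i E) * \chi_\omega$ with $\partial_i E \in L^1_{loc}$; then $|\omega| E \in L^p(B_{r_0})$ iff $p k < d$ (resp.\ all $p \geq 1$ when $d=2$), and $|\omega|\nabla E \in L^p(B_{r_0})$ iff $p(k+1) < d$. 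This recovers the lower endpoints $d/(k+1)$, $d/(k+2)$ in (iii)--(iv) and the lower endpoint being any $p \geq 1$ in the $d=2$ cases (iremembering $p \in (1,2)$ in (ii) comes from the upper endpoint, not this one). On $\VR^d \setminus B_{r_0}$: for $|x| \geq r_0$ and $y \in \omega$ the segment $[x-y, x]$ avoids a neighbourhood of $0$ with $|x - \theta y| \geq \tfrac12|x|$, so Taylor's theorem with integral remainder, together with the bounds $|\nabla^2 E(z)| \leq C|z|^{-(k+2)}$ (resp.\ $C|z|^{-2}$ for $d=2$, $E = \ln$), yields
\[
\big| K(x) - |\omega| E(x) \big| \leq C|x|^{-(k+1)}, \qquad \big| \nabla K(x) - |\omega|\nabla E(x)\big| \leq C|x|^{-(k+2)}
\]
for $|x| \geq r_0$ — here one absorbs the linear-in-$y$ term into the $|x|^{-(k+1)}$ bound since $|\nabla E(x)| \leq C|x|^{-(k+1)}$ anyway, so no symmetry of $\omega$ is needed. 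Integrating: $|x|^{-(k+1)} \in L^p(\{|x| \geq r_0\})$ iff $p(k+1) > d$, giving the upper endpoint $d/k$ in (iii) — wait, rather: combine with near-origin to conclude (iii) holds for $p \in (d/(k+1), d/k)$; and $|x|^{-(k+2)} \in L^p$ at infinity iff $p(k+2) > d$, i.e.\ $p > d/(k+2)$, which together with the near-origin constraint $p(k+1) < d$ gives (iv) for $p \in (d/(k+2), d/(k+1))$. For $d=2$, $E = \ln$: at infinity $|K - |\omega|\ln| \leq C|x|^{-1} \in L^p(\{|x|\geq r_0\})$ iff $p > 2$, so (i) holds for $p \in (2,\infty)$; and $|\nabla(K - |\omega|\ln)| \leq C|x|^{-2} \in L^p$ at infinity iff $p > 1$, while near the origin $\nabla E(x) = x/|x|^2$ so $|\nabla E| \approx |x|^{-1} \in L^p(B_{r_0})$ iff $p < 2$, giving (ii) for $p \in (1,2)$.

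\textbf{Main obstacle.} The only genuinely delicate point is the bookkeeping at infinity: making sure the Taylor remainder bound is uniform in $y \in \omega$ and that the ``linear term'' $\int_\omega \nabla E(x)\cdot y\,dy$ is correctly absorbed rather than claimed to vanish (it does vanish for symmetric $\omega$, but the lemma asserts nothing about symmetry, so one must keep it and note it is of the same order $|x|^{-(k+1)}$ as the claimed bound — this is why (iii) and (i) have those specific upper exponents rather than better ones). Everything else is the routine two-region split plus the elementary criterion $|x|^{-\alpha} \in L^p$. I would present the $d \geq 3$ case in full and remark that $d = 2$ is identical with $|x|^{-k}$ replaced by $\ln|x|$ and its derivatives $|x|^{-1}$, $|x|^{-2}$.
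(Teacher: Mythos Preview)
Your proposal is correct and follows essentially the same strategy as the paper: split $\VR^d$ into a bounded ball and its complement, use a first-order Taylor expansion of $E(x-y)$ around $x$ to get the decay $|K-|\omega|E|\le C|x|^{-(k+1)}$ (resp.\ $|x|^{-1}$ for $d=2$) and $|\nabla(K-|\omega|E)|\le C|x|^{-(k+2)}$ (resp.\ $|x|^{-2}$) at infinity, and handle the bounded region via local integrability of $E$ and $\nabla E$. Your write-up is in fact slightly more careful than the paper's own proof in two places: you explain why the linear Taylor term $\int_\omega \nabla E(x)\cdot y\,dy$ need not vanish but is of the right order to be absorbed, and you justify local regularity of $K$ via the convolution structure $K=E*\chi_\omega$ rather than simply asserting it. One small caveat: your claim that $\partial_i K=(\partial_i E)*\chi_\omega$ is continuous because $\partial_i E\in L^1_{loc}$ requires $k+1<d$; this holds in the paper's applications ($k=d-2$) but not for arbitrary $k\in\VN$. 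For general $k$ in the stated range you should instead argue directly that $\nabla K\in L^p_{loc}$ for $p<d/(k+1)$ via Young's inequality $\|\nabla E*\chi_\omega\|_{L^p}\le \|\nabla E\|_{L^p_{loc}}\|\chi_\omega\|_{L^1}$, which is all that is needed.
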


\begin{proof}
We start with the two dimensional case. Hence, let $E(x)=\ln(|x|)$. Note that a Taylor expansion shows that there is a $R>0$
\begin{equation}\label{eq:remainder_Lp_2}
K(x)-|\omega|E(x)=\int_\omega E(x-y)-E(x)\;dx\le C|x|^{-1},\quad\text{ for }|x|>R.
\end{equation}
Now splitting the integral with respect to this constant $R$ we get
\begin{equation}
\|K-|\omega|E\|^p_{L^p(B_R(0))}\le\|K\|^p_{L^p(B_R(0))}+\||\omega|E\|^p_{L^p(B_R(0))}<\infty\quad\text{ for all }p\in[1,\infty),
\end{equation}
since $\ln(|x|)\in L_p^{\text{loc}}(\VR^2)$ and the same holds for $K$. For the second part we use \eqref{eq:remainder_Lp_2} to conclude
\begin{equation}
\|K-|\omega|E\|^p_{L^p(B_R(0)^c)}\le C\int_{B_R(0)^c}|x|^{-p}\;dx=C\int_R^\infty r^{1-p}\;dr<\infty\quad\text{ for all }p\in(2,\infty),
\end{equation}
which shows (i). From \eqref{eq:remainder_Lp_2} we further see that
\begin{equation}\label{eq:remainder_Wp_2}
\nabla\left(K(x)-|\omega|E(x)\right)\le C|x|^{-2},\quad\text{ for }|x|>R.
\end{equation}
Now, noting that $|\nabla E(x)|=|x|^{-1}$, we see that
\begin{equation}\label{eq:estimate_Wp2_int}
\|\nabla E\|^p_{L^p(B_R(0))^2}=\int_0^Rr^{1-p}\;dr<\infty\quad\text{ for all }p\in(1,2).
\end{equation}
Since the same holds true for $K$, we conclude $\|\nabla(K-|\omega|E)\|_{L^p(B_R(0))^2}<\infty$ for all $p\in(1,2)$. For the exterior domain, we again use \eqref{eq:remainder_Lp_2} to conclude
\begin{equation}\label{eq:estimate_Wp2_ext}
\|\nabla\left(K-|\omega|E\right)\|^p_{L^p(B_R(0)^c)^2}\le C\int_{B_R(0)^c}|x|^{-2p}\;dx=C\int_R^\infty r^{1-2p}\;dr<\infty\quad\text{ for all }p\in(1,\infty).
\end{equation}
Combining \eqref{eq:estimate_Wp2_int} and \eqref{eq:estimate_Wp2_int} yields (ii). Similar arguments, exploiting the Taylor expansion of $|x|^{-k}$ for $k\in\VN$, shows item (iii) and (iv).
\end{proof}

\begin{lemma}\label{lem:scaling_norms_q}
For $x_0\in\Dsf$ and $\eps>0$ let $T_\eps(x):=x_0+\eps x$ and $\Dsf_\eps:=T_\eps^{-1}(\Dsf)$. Further define for $1<p<\infty$ the scaled norm
\begin{equation}
\|\varphi\|_{\eps,p}:=\eps\|\varphi\|_{L_p(\Dsf_\eps)}+\|\nabla\varphi\|_{L_p(\Dsf_\eps)^d},\quad\text{ for all }\varphi\in W^{1,p}(\Dsf_\eps).
\end{equation}
Then there holds:
\begin{itemize}
\item[(i)] $\|\varphi\circ T_\eps^{-1}\|_{W^{1,p}(\Dsf)}=\eps^{\frac{d}{p}-1}\|\varphi\|_{\eps,p}$.
\item[(ii)] $\|\varphi\circ T_\eps^{-1}\|_{L_p(\partial\Dsf)}=\eps^{\frac{d-1}{p}}\|\varphi\|_{L_p(\partial\Dsf_\eps)}$.
\item[(iii)] $|\varphi\circ T_\eps^{-1}|_{W^{\alpha,p}(\partial\Dsf)}=\eps^{\frac{d-1}{p}-\alpha}|\varphi|_{W^{\alpha,p}(\partial\Dsf_\eps)}$.
\item[(iv)] $\|\varphi\|_{\eps,p}\le C\left(\eps^{1-\frac{1}{p}}\|\varphi\|_{L_p(\partial\Dsf_\eps)}+|\varphi|_{W^{1-\frac{1}{p},p}(\partial\Dsf_\eps)}\right)$.
\end{itemize}
\end{lemma}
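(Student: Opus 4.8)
Each of (i)--(iv) reduces to the affine change of variables $y=T_\eps(x)=x_0+\eps x$, a diffeomorphism of $\VR^d$ with constant Jacobian $\eps^d$ that carries $\Dsf_\eps$ onto $\Dsf$ and $\partial\Dsf_\eps$ onto $\partial\Dsf$. The plan is to prove (i)--(iii) by direct substitution, carefully tracking the powers of $\eps$ coming from the Jacobian and from the chain rule, and then to derive (iv) by pushing one $\eps$-independent trace estimate on the fixed domain $\Dsf$ through the identities (i)--(iii).

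Write $\psi:=\varphi\circ T_\eps^{-1}$ on $\Dsf$. For (i), the substitution gives $\int_\Dsf|\psi|^p\,dy=\eps^d\int_{\Dsf_\eps}|\varphi|^p\,dx$, and $\nabla\psi(y)=\eps^{-1}(\nabla\varphi)(T_\eps^{-1}(y))$ together with the same substitution gives $\int_\Dsf|\nabla\psi|^p\,dy=\eps^{d-p}\int_{\Dsf_\eps}|\nabla\varphi|^p\,dx$; with the additive convention $\|\cdot\|_{W^{1,p}}=\|\cdot\|_{L^p}+\|\nabla\cdot\|_{L^p}$ (the $\ell^p$-convention only alters every identity below by a fixed equivalence constant) this reads $\|\psi\|_{W^{1,p}(\Dsf)}=\eps^{d/p}\|\varphi\|_{L^p(\Dsf_\eps)}+\eps^{(d-p)/p}\|\nabla\varphi\|_{L^p(\Dsf_\eps)^d}=\eps^{d/p-1}\|\varphi\|_{\eps,p}$, which is (i). For (ii), the surface measure of $\partial\Dsf$ pulls back under $T_\eps$ to $\eps^{d-1}$ times that of $\partial\Dsf_\eps$, hence $\|\psi\|_{L^p(\partial\Dsf)}=\eps^{(d-1)/p}\|\varphi\|_{L^p(\partial\Dsf_\eps)}$. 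For (iii), substituting $x=T_\eps(\xi)$, $y=T_\eps(\eta)$ in the Gagliardo double integral defining $|\psi|_{W^{\alpha,p}(\partial\Dsf)}^p$, the two surface measures contribute a factor $\eps^{2(d-1)}$, the kernel $|x-y|^{-(d-1)-\alpha p}$ contributes $\eps^{-(d-1)-\alpha p}$, and $\psi(T_\eps(\xi))=\varphi(\xi)$, so $|\psi|_{W^{\alpha,p}(\partial\Dsf)}^p=\eps^{(d-1)-\alpha p}|\varphi|_{W^{\alpha,p}(\partial\Dsf_\eps)}^p$; taking $p$-th roots is (iii).

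For (iv), I would transfer both sides to $\Dsf$ via (i)--(iii). With $\psi=\varphi\circ T_\eps^{-1}$, (i) gives $\|\varphi\|_{\eps,p}=\eps^{1-d/p}\|\psi\|_{W^{1,p}(\Dsf)}$, while (ii) and (iii) with $\alpha=1-\tfrac1p$, using $1-\tfrac1p-\tfrac{d-1}{p}=1-\tfrac dp$, give
\[
\eps^{1-1/p}\|\varphi\|_{L^p(\partial\Dsf_\eps)}=\eps^{1-d/p}\|\psi\|_{L^p(\partial\Dsf)},\qquad |\varphi|_{W^{1-1/p,p}(\partial\Dsf_\eps)}=\eps^{1-d/p}|\psi|_{W^{1-1/p,p}(\partial\Dsf)}.
\]
Dividing (iv) by the common factor $\eps^{1-d/p}$, it becomes the scaling-invariant estimate $\|\psi\|_{W^{1,p}(\Dsf)}\le C\bigl(\|\psi\|_{L^p(\partial\Dsf)}+|\psi|_{W^{1-1/p,p}(\partial\Dsf)}\bigr)$ on the fixed smooth domain $\Dsf$, with $C$ independent of $\eps$. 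For $\psi$ in the class in which the lemma is actually used --- solutions of the homogeneous equation $\Div(\beta_\Omega\nabla\psi)=0$ in $\Dsf$, in particular harmonic functions --- this is the standard $L^p$-boundedness of the Dirichlet solution operator with $W^{1-1/p,p}(\partial\Dsf)$ data (equivalently, of the harmonic extension operator) for $C^1$ domains and $1<p<\infty$, together with the norm equivalence $\|\cdot\|_{W^{1-1/p,p}(\partial\Dsf)}\asymp\|\cdot\|_{L^p(\partial\Dsf)}+|\cdot|_{W^{1-1/p,p}(\partial\Dsf)}$.

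Steps (i)--(iii) are purely mechanical; the only delicate point is the exponent accounting in (iv), namely checking that the boundary-term powers match the left-hand factor $\eps^{d/p-1}$, which works because the seminorm scaling exponent in (iii) is $\tfrac{d-1}{p}-(1-\tfrac1p)=\tfrac dp-1$. The one genuinely non-routine issue is that (iv) is \emph{not} valid for every $\varphi\in W^{1,p}(\Dsf_\eps)$ --- a function compactly supported in $\Dsf_\eps$ makes the right-hand side vanish while the left-hand side does not --- so it must be read, consistently with its use in the proof of Lemma~\ref{lem:convergence_Keps_transmission}, with $\varphi$ restricted to solutions of the homogeneous transmission (or Laplace) equation, for which the fixed-domain estimate above is available; equivalently one phrases it as the existence of such a $\varphi$ with prescribed boundary trace satisfying the bound, via a bounded lifting operator.
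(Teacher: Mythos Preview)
Your proof of (i)--(iii) is essentially identical to the paper's: both are direct change-of-variables computations tracking the Jacobian factor $\eps^d$ (or $\eps^{d-1}$ on the boundary) and, for (iii), the kernel scaling in the Gagliardo seminorm.

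For (iv) you and the paper follow the same overall strategy --- pull everything back to the fixed domain $\Dsf$ via (i)--(iii), reduce to a scaling-invariant estimate $\|\psi\|_{W^{1,p}(\Dsf)}\le C\|\psi\|_{W^{1-1/p,p}(\partial\Dsf)}$, then scale back --- but you justify the fixed-domain estimate differently. The paper invokes the ``right-inverse extension operator'', i.e.\ the bounded trace lifting $W^{1-1/p,p}(\partial\Dsf)\to W^{1,p}(\Dsf)$, which implicitly reads (iv) as an \emph{existence} statement: given boundary data, there is an extension obeying the bound. You instead invoke boundedness of the Dirichlet (or transmission) solution operator, reading (iv) as a bound valid for the specific $\varphi$ that solves the homogeneous equation. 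Both readings suffice for the application to $V_\eps$ in Lemma~\ref{lem:convergence_Keps_transmission}, and both are correct; your version has the advantage of being directly about the function $V_\eps$ actually used. Your explicit remark that (iv) fails for arbitrary $\varphi\in W^{1,p}(\Dsf_\eps)$ (take $\varphi$ compactly supported) is a genuine clarification the paper leaves implicit.
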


\begin{proof}
ad (i): This is a direct consequence of the scaling of $L_p$ norms.\newline
ad (ii): The same argument as before, considering $\text{dim}(\partial\Dsf)=d-1$.\newline
ad (iii): We have
\begin{align}
|\varphi\circ T_\eps^{-1}|_{W^{\alpha,p}(\partial\Dsf)}^p=&\int_{\partial\Dsf}\int_{\partial\Dsf}\frac{|\varphi\circ T_\eps^{-1}(x)-\varphi\circ T_\eps^{-1}(y)|^p}{|x-y|^{\alpha p+d-1}}\;dxdy\\
=&\eps^{2d-2}\int_{\partial\Dsf_\eps}\int_{\partial\Dsf_\eps}\frac{|\varphi(x)-\varphi(y)|^p}{|(x_0+\eps x)-(x_0+\eps y)|^{\alpha p+d-1}}\;dxdy\\
=&\eps^{d-1-\alpha p}\int_{\partial\Dsf_\eps}\int_{\partial\Dsf_\eps}\frac{|\varphi(x)-\varphi(y)|^p}{|x-y|^{\alpha p+d-1}}\;dxdy\\
=&\eps^{d-1-\alpha p}|\varphi|_{W^{\alpha,p}(\partial\Dsf)}^p.
\end{align}
ad (iv): Using the previous scalings and the right-inverse extension operator on $\Dsf$, we get
\begin{align}
\|\varphi\|_{\eps,p}=&\eps^{1-\frac{d}{p}}\|\varphi\circ T_\eps^{-1}\|_{W^{1,p}(\Dsf)}\\
\le&C\eps^{1-\frac{d}{p}}\left(\|\varphi\circ T_\eps^{-1}\|_{L_p(\partial\Dsf)}+|\varphi\circ T_\eps^{-1}|_{W^{1-\frac{1}{p},p}(\partial\Dsf)}\right)\\
=&C\eps^{1-\frac{d}{p}}\left(\eps^{\frac{d-1}{p}}\|\varphi\|_{L_p(\partial\Dsf_\eps)}+\eps^{\frac{d-1}{p}-(1-\frac{1}{p})}|\varphi|_{W^{1-\frac{1}{p},p}(\partial\Dsf_\eps)}\right)\\
=&C\left(\eps^{1-\frac{1}{p}}\|\varphi\|_{L_p(\partial\Dsf_\eps)}+|\varphi|_{W^{1-\frac{1}{p},p}(\partial\Dsf_\eps)}\right).
\end{align}
\end{proof}

\begin{lemma}\label{lem:scaling_bnd_transmission}
Let $g(x):=|x|^{-d}$, for $x\in\VR^d$. Then there holds
\begin{itemize}
\item[(i)] $\|g\|_{L_p(\partial\Dsf_\eps)}\le C\eps^{\frac{1-d}{p}+d}$.
\item[(ii)] $|g|_{W^{\alpha,p}(\partial\Dsf_\eps)}\le C\eps^{\frac{1-d}{p}+d+\alpha}$.
\end{itemize}
\end{lemma}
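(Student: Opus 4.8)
The plan is to reduce both estimates to the \emph{fixed} hypersurface $\partial\Dsf$ via the dilation $T_\eps(x)=x_0+\eps x$, exploiting the norm scalings already recorded in Lemma~\ref{lem:scaling_norms_q}. First I would note that $\partial\Dsf_\eps=T_\eps^{-1}(\partial\Dsf)$ and that for $y\in\partial\Dsf$ one has $T_\eps^{-1}(y)=(y-x_0)/\eps$, so that
\[
(g\circ T_\eps^{-1})(y)=\left|\tfrac{y-x_0}{\eps}\right|^{-d}=\eps^{d}\,h(y),\qquad h(y):=|y-x_0|^{-d}.
\]
The key observation is that, since $\Dsf$ is open and $x_0\in\Dsf$, the point $x_0$ has positive distance to $\partial\Dsf$; hence $h$ is a fixed ($\eps$-independent) function of class $C^\infty$ on the compact smooth hypersurface $\partial\Dsf$, and in particular $\|h\|_{L_p(\partial\Dsf)}<\infty$ and $|h|_{W^{\alpha,p}(\partial\Dsf)}<\infty$ for all $p\in(1,\infty)$ and $\alpha\in(0,1)$.

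For (i) I would then invoke Lemma~\ref{lem:scaling_norms_q}(ii), which gives $\|g\|_{L_p(\partial\Dsf_\eps)}=\eps^{-\frac{d-1}{p}}\|g\circ T_\eps^{-1}\|_{L_p(\partial\Dsf)}$; substituting $g\circ T_\eps^{-1}=\eps^{d}h$ yields $\|g\|_{L_p(\partial\Dsf_\eps)}=\|h\|_{L_p(\partial\Dsf)}\,\eps^{d-\frac{d-1}{p}}$, and since $d-\frac{d-1}{p}=\frac{1-d}{p}+d$ this is exactly the claimed bound with $C=\|h\|_{L_p(\partial\Dsf)}$ (in fact with equality). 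For (ii) I would use Lemma~\ref{lem:scaling_norms_q}(iii), $|g|_{W^{\alpha,p}(\partial\Dsf_\eps)}=\eps^{\alpha-\frac{d-1}{p}}|g\circ T_\eps^{-1}|_{W^{\alpha,p}(\partial\Dsf)}$, together with the positive homogeneity of the Slobodeckij seminorm, which gives $|g\circ T_\eps^{-1}|_{W^{\alpha,p}(\partial\Dsf)}=\eps^{d}|h|_{W^{\alpha,p}(\partial\Dsf)}$; combining the two produces $|g|_{W^{\alpha,p}(\partial\Dsf_\eps)}=|h|_{W^{\alpha,p}(\partial\Dsf)}\,\eps^{d+\alpha-\frac{d-1}{p}}$, which is the claim once one rewrites the exponent as $\frac{1-d}{p}+d+\alpha$.

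As an alternative, if one prefers not to cite Lemma~\ref{lem:scaling_norms_q}, both bounds follow directly from changing variables in the defining integrals (the surface measure on $\partial\Dsf_\eps$ scales by $\eps^{-(d-1)}$, Euclidean distances by $\eps^{-1}$), which isolates the $\eps$-powers and leaves behind the fixed finite quantities $\int_{\partial\Dsf}|y-x_0|^{-dp}\,d\Ch^{d-1}$ and the analogous double integral. I do not expect any substantive obstacle here: the only point that needs a sentence is the finiteness of the $\eps$-independent constants, which is immediate from $\operatorname{dist}(x_0,\partial\Dsf)>0$ and the smoothness of $\partial\Dsf$; the rest is just careful bookkeeping of the exponents to match the statement.
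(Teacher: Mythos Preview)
Your proposal is correct and follows essentially the same approach as the paper: both reduce to the fixed hypersurface $\partial\Dsf$ by the change of variables $y=T_\eps(x)$ and use that $x_0$ has positive distance to $\partial\Dsf$ so the resulting integrals are finite, $\eps$-independent constants. Your use of the exact homogeneity $g\circ T_\eps^{-1}=\eps^d h$ together with Lemma~\ref{lem:scaling_norms_q} makes part~(ii) slightly cleaner than the paper's mean-value estimate, but the underlying argument is the same.
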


\begin{proof}
ad (i): We have
\begin{equation}
\|g\|_{L_p(\partial\Dsf_\eps)}^p=\int_{\partial\Dsf_\eps}|x|^{-dp}\;dx=\eps^{1-d}\int_{\partial\Dsf}\left|\frac{x-x_0}{\eps}\right|^{-dp}\;dx\le C\eps^{1-d+dp}.
\end{equation}
ad (ii): Similarly we conclude
\begin{align}
|g|_{W^{\alpha,p}(\partial\Dsf_\eps)}^p=&\int_{\partial\Dsf_\eps}\int_{\partial\Dsf_\eps}\frac{|g(x)-g(y)|^p}{|x-y|^{p\alpha+d-1}}\\
=&\eps^{2-2d}\int_{\partial\Dsf}\int_{\partial\Dsf}\frac{|g\circ T_\eps^{-1}(x)-g\circ T_\eps^{-1}(y)|^p}{|x-y|^{p\alpha+d-1}\eps^{-(p\alpha+d-1)}}\\
=&\eps^{1-d+p\alpha}\int_{\partial\Dsf}\int_{\partial\Dsf}\frac{|g\circ T_\eps^{-1}(x)-g\circ T_\eps^{-1}(y)|^p}{|x-y|^{p\alpha+d-1}}\\
\le&C\eps^{1-d+p\alpha}\eps^{-p+p(d+1)}\\
=&C\eps^{1-d+p(d+\alpha)},
\end{align}
where we used $|g\circ T_\eps^{-1}(x)-g\circ T_\eps^{-1}(y)|\approx\eps^{-1}|\nabla (g\circ T_\eps^{-1})(x)\cdot(x-y)|$ and $\nabla g(x)\approx |x|^{-(d+1)}$. For a more detailed proof of this estimate we refer to \cite[Lemma 4.4]{a_BAST_2021a}.
\end{proof}

\subsection*{Acknowledgements}
Idriss Mazari-Fouquer was partially supported by the project ``Analysis and simulation of optimal shapes - application to life sciences'' of the Paris City Hall and by the french ANR-18-CE40-0013-SHAPO on shape optimisation.\newline
Phillip Baumann has been funded by the Austrian Science Fund (FWF) project P 32911.

 \bibliographystyle{plain}
 \bibliography{special-issue}
\end{document}